\newcommand{\N}{\mathbb{N}}
\newcommand{\R}{{\mathbb{R}}}
\newcommand{\C}{{\mathbb{C}}}
\newcommand{\Z}{{\mathbb{Z}}}
\newcommand{\dd}{{{\rm d}}}
\newcommand{\ii}{{\rm i}}
\newcommand{\cf}{\emph{cf.}}
\newcommand{\ie}{{\emph{i.e.}}}
\newcommand{\eg}{{\emph{e.g.}}}
\newcommand{\ov}{\overline}
\newcommand{\eps}{\varepsilon}
\renewcommand{\H}{{\mathcal{H}}}
\newcommand{\spp}{\sigma_{\rm p}}
\newcommand{\Dom}{{\operatorname{Dom}}}
\newcommand{\Ran}{{\operatorname{Ran}}}
\newcommand{\Rank}{{\operatorname{rank}}}
\renewcommand{\Re}{\operatorname{Re}}
\renewcommand{\Im}{\operatorname{Im}}
\newcommand{\dist}{\operatorname{dist}}
\newcommand{\supp}{\operatorname{supp}}
\newcommand{\Tr}{\operatorname{Tr}}
\newcommand{\Num}{\operatorname{Num}}
\newcommand{\BigO}{\mathcal{O}}
\newcommand{\lspan}{{\operatorname{span}}}
\theoremstyle{plain}
\newtheorem{theorem}{Theorem}[section]
\newtheorem{lemma}[theorem]{Lemma}
\newtheorem{proposition}[theorem]{Proposition}
\newtheorem{corollary}[theorem]{Corollary}
\theoremstyle{definition}
\newtheorem{remark}[theorem]{Remark}
\newcommand\cH{\mathcal H}
\newcommand\cM{\mathcal M}
\newcommand\cS{\mathcal S}
\numberwithin{equation}{section}
\theoremstyle{definition}
\newtheorem{asmR}{Assumption}
\begin{document}

\title[Local form-subordination condition]{Local form-subordination condition and Riesz basisness of root systems}

\author{Boris Mityagin}
\address[Boris Mityagin]{
Department of Mathematics,
The Ohio State University,
231 West 18th Ave,
Columbus, OH 43210, USA}
\email{mityagin.1@osu.edu}

\author{Petr Siegl}
\address[Petr Siegl]{Mathematical Institute, University of Bern, Alpeneggstr. 22, 3012 Bern, Switzerland \& On leave from Nuclear Physics Institute CAS, 25068 \v Re\v z, Czech Republic}
\email{petr.siegl@math.unibe.ch}

\subjclass[2010]{47A55, 47A70, 34L10, 34L40}

\keywords{non-self-adjoint operators, Riesz basis, Schr\"odinger operators with complex and singular potentials}

\date{31st July 2016}

\dedicatory{In memory of our colleague and friend Michael Solomyak.}

\begin{abstract}
We exploit the so called form-local subordination in the analysis of non-symmetric perturbations of unbounded self-adjoint operators with isolated simple positive eigenvalues. If the proper condition relating the size of gaps between the unperturbed eigenvalues and the strength of perturbation, measured by the form-local subordination, is satisfied, the root system of the perturbed operator contains a Riesz basis and usual asymptotic formulas for perturbed eigenvalues and eigenvectors hold. The power of the abstract perturbation results is demonstrated particularly on Schr\"odinger operators with possibly unbounded or singular complex potential perturbations.
\end{abstract}

\thanks{
We acknowledge the support of the University of Bern (B.M., October-November 2015) and the Ohio State University (P.S., May 2016) for our visits there. The research of P.S. is supported by the \emph{Swiss National Foundation} Ambizione grant No.~PZ00P2\_154786.}

\maketitle

\section{Introduction}

Let $T$ be a Schr\"odinger operator in $L^2(\R)$
\begin{equation}\label{T.Schr.intro}
T = -\frac{\dd^2}{\dd x^2} + Q(x) + V(x)
\end{equation}
where $Q$ is a sufficiently regular \emph{real} single-well potential behaving as $|x|^\beta$, $\beta \geq 1$, at infinity and $V$ is a \emph{complex}, possibly unbounded or singular, perturbation. The spectrum of $T$ is discrete under mild restrictions on $V$, guaranteeing basically that $V$ is indeed a ``small'' perturbation of the self-adjoint operator 
\begin{equation}\label{A.Schr.intro}
A = -\frac{\dd^2}{\dd x^2} + Q(x).
\end{equation}
However, it is quite unclear under which conditions on $V$ the eigensystem of $T$ shares the good properties of the eigenfunctions of $A$, forming an orthonormal basis of $L^2(\R)$. More specifically, for which $V$ does the eigensystem of $T$ contain a Riesz basis?

Notice that the Riesz basisness of eigensystem is in particular strongly linked with the spectral stability/instability (pseudospectra/resolvent norm far from spectrum) of the spectrum of $T$. The spectral instability is well-known to occur for Schr\"odinger operators with complex potentials like the rotated oscillator of Davies
\begin{equation}\label{HO.rot}
-\frac{\dd^2}{\dd x^2} + \ii x^2,
\end{equation}
for which the eigensystem does not contain even a basis. Such results are obtained in several works and are typically based on the norm estimates of the resolvent (pseudospectra) or eigenprojections, see \eg~\cite{Davies-1999-200, Davies-2000-32, Davies-2004-70, Henry-2012-350, Henry-2014-4, Henry-2014-15, Siegl-2012-86,Krejcirik-2015-56}. 
Nonetheless, by proving the unboundedness the eigenprojection norms in \cite{Mityagin-2013arx}, no basis result follows  also for the shifted oscillator 
\begin{equation}\label{HO.shift}
-\frac{\dd^2}{\dd x^2} + x^2 + \ii x,
\end{equation}
where $Q(x)=x^2$ clearly dominates the imaginary perturbation $V(x)=\ii x$. On the other hand, it was showed in \cite{Mityagin-2016-106} that the eigensystem of
\begin{equation}\label{HO.delta}
-\frac{\dd^2}{\dd x^2} + x^2 + \ii \delta(x),
\end{equation}
does contain a Riesz basis; the latter holds also for an infinite number of $\delta$'s, namely for $\sum_{k \in \Z} \nu_k \delta(x-x_k)$ with $\nu \in \ell^1(\Z)$ and $\{x_k\} \subset \R$, see \cite{Mityagin-2016-106,Mityagin-2015-54,Mityagin-toappear}. The perturbations in \eqref{HO.shift} and \eqref{HO.delta} seem to be of a different nature, nevertheless, their strength is the \emph{same} if described in classical terms of relative boundedness or $p$-subordination. More specifically, when viewed in the sense of quadratic forms,
\begin{equation}
b_1[\psi] \equiv b_{\ii x}[\psi] = \ii \int_{\R} x |\psi(x)|^2 \, \dd x, \qquad b_2[\psi] \equiv b_{\ii \delta(x)}[\psi] = \ii |\psi(0)|^2,
\end{equation}
both $b_1$ and $b_2$ are $p$-subordinated with $p=1/2$ to the form $a$, associated with the self-adjoint harmonic oscillator. Namely, there is $C>0$ such that, for all $\psi \in \Dom(a)= \{\psi \in W^{1,2}(\R): x \psi(x) \in L^2(\R) \}$,
\begin{equation}
|b_i[\psi]| \leq C a[\psi]^p \|\psi\|^{2(1-p)}, \quad i = 1,2, \quad p = \frac12, 
\end{equation}
where
\begin{equation}
a[\psi] = \|\psi'\|^2 + \int_\R x^2 |\psi(x)|^2 \, \dd x. 
\end{equation}
These two examples clearly demonstrate that the classical sufficient conditions for the Riesz basisness of the eigensystem based on $p$-subordination, see \eg~\cite[Thm.XIX.2.7]{DS}  or \cite[Thm.6.12]{Markus-1988}, cannot provide satisfactory answers for 
\eqref{T.Schr.intro}. 

The objective of this paper is to analyze deeper, both on the abstract level and focused on \eqref{T.Schr.intro}, the perturbation problems by employing a condition that measures the strength of the perturbation in a more subtle way than the ordinary $p$-subordination. In detail, we work in the following setting.

Let $A$ be a self-adjoint operator with compact resolvent in a separable Hilbert space $\H$. Let the eigenvalues $\{\mu_k\}_{k\in \N}$ of $A$ be positive, eventually simple and satisfy
\begin{equation}\label{asm:A}
\begin{aligned}
& A \psi_k = \mu_k \psi_k, \quad \|\psi_k\| =1,
\\
& \exists \gamma>0, \ \exists\kappa>0, \ \exists N_0> 0, \ \forall k \geq N_0, \quad
\mu_{k+1} -\mu_k \geq  \kappa k^{\gamma-1};
\end{aligned}
\end{equation}
by $\{\psi_k\}$ we denote normalized eigenvectors of $A$ related to $\{\mu_k\}$. The key assumption on the form $b$, representing the perturbation, is the so-called \emph{local form-subordination condition}
\begin{equation}\label{asm:b}
\exists \alpha \in \R, \ 2\alpha + \gamma > 1, \quad \exists M_b >0, \quad \forall m,n \in \N, \quad  |b(\psi_m,\psi_n)| \leq \frac{M_b}{m^{\alpha} n^{\alpha}}.
\end{equation}

The main abstract result, Theorem~\ref{thm:RB}, states that if conditions \eqref{asm:A} and \eqref{asm:b} are satisfied, then the eigensystem of the perturbed operator $T$ contains a Riesz basis; the operator $T$ is defined via the form sum $a+b$, where is $a$ associated with $A$, see Section~\ref{sec:def.op} for details on introducing $T$. Moreover, the eigenvalues of $T$ are eventually simple, the usual asymptotic formulas for the corrections of $\{\mu_k\}$ and $\{\psi_k\}$ hold and remainder estimates, expressed in terms of $\alpha$ and $\gamma$, are given, see~Theorem~\ref{thm:ev.asym}. 

The applicability of the conditions \eqref{asm:A} and \eqref{asm:b} is demonstrated on the Schr\"odinger operator $T$ from \eqref{T.Schr.intro} viewed as a perturbation of the self-adjoint $A$ from \eqref{A.Schr.intro}. The condition \eqref{asm:A} is satisfied for this $A$ with $\gamma = 2\beta/(\beta+2)$, see Proposition~\ref{prop:Q.gaps}. On the other hand, for $\beta \geq 2$, the form $b$ generated by the potential $V$ satisfies the condition~\eqref{asm:b} if \eg~$V = V_1 + V_2 + V_3 + V_4$ where
\begin{equation}\label{A.V.intro}
\begin{aligned}
&\exists \eps >0, \quad  |x|^{\frac{2-\beta}{2} +\eps} V_1(x) \in L^\infty(\R),
\\ 
&\exists p \in [1,\infty), \quad V_2 \in L^p(\R),
\\
&\exists s \in [0 ,\frac{\beta-1}{2 \beta}), \quad V_3  \in W^{-s,2}(\R),  
\\
&\exists \{\nu_k\} \in \ell^1(\Z), \quad \exists \{x_k\} \subset \R, \quad V_4  = \sum_{k \in \Z} \nu_k \, \delta(x-x_k),   
\end{aligned}
\end{equation}
see Theorems~\ref{thm:aho.Lptau}, \ref{thm:aho.sing} and Corollaries~\ref{cor:aho.Lptau}, \ref{cor:aho.sing} for precise statements (with $\beta < 2$ allowed). Moreover, for $V \in L^1(\R)$ with a controlled decay at infinity, see Theorem~\ref{thm:aho.ev}, we prove that the first corrections of $\mu_k$ read (with the precisely determined constant $C_\beta$)
\begin{equation}\label{ev.asym.intro}
\lambda_n^{(1)} = C_\beta \, n^{-\frac{2}{\beta+2}} \int_\R V(x) \; \dd x + o \left(n^{-\frac{2}{\beta+2}}\right), \quad n \to \infty. 
\end{equation}

Although our main motivation are Schr\"odinger operators $T$ from \eqref{T.Schr.intro}, the abstract results are applicable to other problems. In particular, see Section~\ref{subsec:finite.band} for infinite finite band matrices and Section~\ref{subsec:Neumann} for perturbations of $-\dd^2/\dd x^2$ on a finite interval with Neumann boundary conditions. For the latter, some classical results, \eg~the Riesz basisness of the eigensystem for the separated boundary conditions, see \cite[Chap.XIX.3]{DS}, follow immediately when formulated in our setting. The efficiency of our approach can be further illustrated on that the amount of effort needed to prove the Riesz basisness for  $-\dd^2/\dd x^2$ from above perturbed \eg~by the infinite number of complex $\delta$-interactions (which can hardly be treated by ODE methods) is basically the same as when perturbing by a bounded potential; see Section~\ref{subsec:Neumann}. As $-\dd^2/\dd x^2$ on $(-1,1)$ with Dirichlet boundary conditions is a limit of $A$ from \eqref{A.Schr.intro} with $Q(x)=|x|^\beta$ for $\beta \to +\infty$, it is not surprising that by taking formally $\beta = + \infty$ in the formulas, \eg~\eqref{ev.asym.intro}, and conditions derived for $A$ with $\beta \in (1,\infty)$, we recover those for the limit $-\dd^2/\dd x^2$ on $(-1,1)$, see~Remark~\ref{rem:beta.inf}.

Regarding the relations to previous works, the special version of the condition \eqref{asm:b} with $\gamma=1$ was introduced in \cite{Mityagin-2016-106}; the relation to the operator version of \eqref{asm:b} used in \cite{Adduci-2012-10, Adduci-2012-73, Shkalikov-2010-269, Shkalikov-2012-18} is discussed in \cite{Mityagin-2016-106} as well. Comparing to previous papers, we allow here a faster condensation of $\{\mu_k\}$ at infinity, namely $\mu_k \sim k^\gamma$ with $\gamma >0$ is possible, \cf~\cite{Adduci-2012-73} with the restriction $\gamma > 1/2$. At the same time, the proof on the main abstract Theorem~\ref{thm:RB} on the Riesz basisness is simplified by using only the equivalent condition for a Riesz basis, see~\cite[Thm.VI.2.1]{Gohberg-1969}, together with the Schur test for infinite matrices, see \eg~\cite{Schur-1911-140}, \cite[Thm.5.2]{Halmos-1978-96}; thereby we avoid the Kato's lemma on projections \cite[Thm.V.4.17a]{Kato-1966} used in \cite{Adduci-2012-10, Adduci-2012-73,Mityagin-2016-106}. Moreover, the condition \eqref{asm:b} is sharp in the sense that it cannot be weakened to  $2\alpha + \gamma =1$, see Section~\ref{subsec:opt}. As for Schr\"odinger operators $T$, the form-local subordination allows for treating more singular potentials (in $L^p$ with $1 \leq p < 2$) than in \cite{Adduci-2012-73} and also the distributional ones. The asymptotic formula for the first eigenvalue correction, see \eqref{ev.asym.intro} or \eqref{lam.1.dec}, should be compared with precise two terms asymptotics of eigenvalues in \cite{Mityagin-2015-54,Mityagin-toappear} for the harmonic oscillator ($Q(x)=x^2$) perturbed by $\delta$-interactions (instead of $L^1$-potentials).

The paper is organized as follows. In Section \ref{sec:def.op} we recall the definition of the perturbed operator $T$ based on the form sum. Moreover, using the classical perturbation theory, we derive basic properties of $T$, in particular the completeness of its eigensystem. Main abstract results, Riesz basisness, asymptotic formulas for eigenvalues and eigenvectors, are stated and proved in Section~\ref{sec:abstract}. In Section~\ref{sec:tech.lem}, we prove several technical lemmas used in the proofs in Section~\ref{sec:abstract}. Section~\ref{sec:ex.sim} consists of several examples, showing the optimality of our assumptions and applicability of the main theorems in simpler examples. Finally, Section~\ref{sec:aho} is devoted to the analysis of the Sch\"odinger operators \eqref{T.Schr.intro}.

%
%

\section{Preliminaries}
\label{sec:def.op}

The definition of the (abstract) perturbed operator $T$ is based on representation and perturbations theorems from \cite[Sec.VI]{Kato-1966}. Let $A$ be a self-adjoint operator satisfying \eqref{asm:A} and $b$ be a form satisfying \eqref{asm:b}. The operator $A$ is associated with the form
\begin{equation}
a[f]:=a(f,f) = \|A^\frac12 f\|^2, \quad \Dom(a)=\Dom(A^\frac12),
\end{equation}
see~\cite[Thm.VI.2.23]{Kato-1966} for the second representation theorem. We show below that the form $t:=a+b$ is sectorial and closed. Thus, using the first representation theorem, see~\cite[Thm.VI.2.1]{Kato-1966}, $t$ is associated with the unique m-sectorial operator $T$, which is our perturbed operator. In particular, the Schr\"odinger operator $T$ from \eqref{T.Schr.intro} with $V \in L^1_{\rm loc}(\R)$ is obtained by choosing $A$ as in \eqref{A.Schr.intro} and $b$ as the form $b_V$ generated by $V$, see~\eqref{bV.def}; for the distributional potentials see~\eqref{bV.sing}.

To show that $t$ is sectorial and closed, notice that when the condition~\eqref{asm:b} is satisfied, the form $b$ is $p$-subordinated to $a$ with some $p=p(\alpha,\gamma) \in [0,1)$, 
\begin{equation}\label{form.sub}
\forall f \in \Dom(a), \quad |b[f]| \leq C (a[f] )^p \|f\|^{2(1-p)};
\end{equation}
see Lemma~\ref{lem:subord} for details. Hence $b$ is relatively bounded with respect to $a$ with the bound $0$ in particular, i.e. 
\begin{equation}\label{b.rb}
\forall \eps >0, \  \exists C_\eps \geq 0, \ \forall f \in \Dom(a), \quad |b[f]| \leq \eps a[f] + C_\eps \|f\|^2.
\end{equation}
Thus the form $t=a+b$ is indeed sectorial and closed and it determines uniquely the m-sectorial operator $T$ with compact resolvent, see~\cite[Thm.VI.3.4]{Kato-1966}. Moreover, the norm of the resolvent of $T$ decays along every ray except $\R^+$, namely,
\begin{equation}\label{T.res.decay}
\forall \vartheta \in (0,2\pi), \quad \lim_{t \to + \infty}\|(e^{\ii \vartheta}t - T)^{-1} \| = 0;
\end{equation}
the proof is based on the relative boundedness with the bound $0$ and
\begin{equation}
\|(z - T)^{-1} \| \leq \frac{1}{\dist(z, \ov{\Num(T)})},
\end{equation}
where $\Num(T)$ denotes the numerical range of $T$, see~\cite[Thm.V.3.2]{Kato-1966}.

The operator $T$ can be also described as follows. We introduce the operators 
\begin{equation}\label{K.def}
K(z) f := \sum_{k \in \N} (z-\mu_k)^{-\frac 12} \langle f, \psi_k \rangle \psi_k, \quad z \in \rho(A),
\end{equation}
where, for $0\neq w \in \C$ and $s\in \R$, the $s$-power of $w$ is taken as $w^s := |w|^s e^{\ii s \arg w}$ with $-\pi < \arg w \leq \pi$. Notice that
\begin{equation}\label{K^2}
K(z)^2 = (z-A)^{-1}, \quad z \in \rho(A).
\end{equation}
Then the operator $T$ reads
\begin{equation}\label{T.def}
T = A^{\frac 12}(I-B(0))A^{\frac 12},
\end{equation}
where $B(z)$, $z \in \rho(A)$, is the operator uniquely determined by the bounded form 
\begin{equation}
b(K(z) \cdot,K(z)^* \cdot).
\end{equation}
In more detail, $B(z)$ is determined from the relation $\langle B(z)f,g \rangle = b(K(z) f,(K(z)^* g) $ for all $f,g \in \H$. 
For all $z \in \rho(A)$, we have
\begin{equation}
z-T = K(z)^{-1}(I-B(z)) K(z)^{-1}
\end{equation}
and this relation yields a suitable factorization of the resolvent of $T$, namely
\begin{equation}\label{Tz.res.dec}
(z-T)^{-1} = K(z)(I-B(z))^{-1}K(z),
\end{equation}
provided $I-B(z)$ is invertible and $z \in \rho(A)$; see also~\cite[Lemma~1]{Agranovich-1994-28}, \cite[Chap.VI.3.1]{Kato-1966}.

Points $z$ for which $I-B(z)$ is invertible certainly exist since we get straightforwardly from~\eqref{b.rb} and~\eqref{K^2} that
\begin{equation}
\forall \eps >0, \  \exists \tilde C_\eps \geq 0, \ \forall z<0, \quad  \|B(z)\| 
\leq 
\eps + \frac{\tilde C_\eps}{|z|}.  
\end{equation}
Thus $\|B(z)\|<1$ if $z<0$ and $|z|$ is sufficiently large, hence $I-B(z)$ is invertible for such $z$'s. 

The inequality \eqref{asm:A} implies that, for sufficiently large $k$, we have
\begin{equation}\label{muk.gr}
\exists c > 0, \ \exists k_0 \in \N, \ \forall k > k_0, \quad \mu_k \geq c k^\gamma, 
\end{equation}
see Lemma~\ref{lem:sum.dist} for details. Thus for all $z \in \rho(A)$, $K(z)$ is in the Schatten class $\cS_{2r}$ with any $r$ satisfying $r \gamma >1$. The factorization~\eqref{Tz.res.dec} and the fact that $(I-B(z))^{-1}$ is bounded for some $z<0$ implies that 
\begin{equation}\label{T.Sr}
\forall z \in \rho(T), \ \forall r> \frac 1 \gamma, \quad (z-T)^{-1} \in \cS_r.
\end{equation}
Combining \eqref{T.Sr}, \eqref{T.res.decay} and~\cite[Cor.XI.9.31]{DS}, we obtain the following.

\begin{proposition}\label{prop:complete}
Let conditions~\eqref{asm:A}, \eqref{asm:b} hold and let $T$ be as in~\eqref{T.def}. Then the eigensystem of $T$ is complete in $\H$.
\end{proposition}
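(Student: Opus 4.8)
The plan is to derive completeness from a classical Keldysh-type theorem: if an operator with compact resolvent has its resolvent in a Schatten class of finite exponent and this resolvent decays (or merely stays $O(|\lambda|^{-1})$) along a finite family of rays that cut the plane into angular sectors narrower than the critical angle attached to that exponent, then the closed linear span of its root subspaces is the whole space. Both hypotheses have in effect already been established above, so only a short assembly remains; concretely I would quote \cite[Cor.XI.9.31]{DS}, which is exactly of this form.

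First I would fix some $z_0<0$ with $|z_0|$ large enough that $I-B(z_0)$ is invertible, so that $z_0\in\rho(T)$ and, by the factorization \eqref{Tz.res.dec}, $C:=(z_0-T)^{-1}$ is compact; by \eqref{T.Sr}, choosing any integer $p>1/\gamma$ gives $C\in\cS_p$. Since $T$ has compact resolvent, $\sigma(T)$ consists of isolated eigenvalues of finite algebraic multiplicity, the root system of $T$ is well defined, and ``completeness of the eigensystem of $T$'' means precisely that the closed linear span of the root subspaces of $T$ (equivalently, of $C$) is all of $\H$.

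Next I would fix the ray system. I would take $n\in\N$ with $n>2p$ and the rays $L_j=\{t\,e^{\ii\vartheta_j}:t>0\}$ with $\vartheta_j=\frac{\pi}{n}+\frac{2\pi j}{n}$, $j=0,\dots,n-1$: these split $\C$ into $n$ sectors of opening $2\pi/n<\pi/p$, and, each $\vartheta_j$ being an odd multiple of $\pi/n$, none of the $L_j$ coincides with the exceptional direction $\R^+$. Along each $L_j$, \eqref{T.res.decay} gives $\|(e^{\ii\vartheta_j}t-T)^{-1}\|\to0$ as $t\to+\infty$; since moreover $\ov{\Num(T)}$ lies in a parabolic region around $\R^+$ (because $b$ is relatively form-bounded with bound $0$, cf.~\eqref{b.rb}), the estimate $\|(\lambda-T)^{-1}\|\le\dist(\lambda,\ov{\Num(T)})^{-1}$ upgrades this to the $O(|\lambda|^{-1})$ growth required by the cited corollary. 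Feeding $C\in\cS_p$ and these $n$ rays into \cite[Cor.XI.9.31]{DS} then yields the density of the span of the root subspaces of $C$, hence of $T$, which is the assertion.

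I do not expect a genuine obstacle here: the argument is bookkeeping on top of the two structural facts \eqref{T.Sr} and \eqref{T.res.decay}. The one point to watch is the interplay between the Schatten exponent and the angular aperture — one must check that finitely many rays avoiding the single bad direction $\R^+$ can still be chosen so that every intervening sector is narrower than $\pi/p$, which works precisely because \eqref{T.res.decay} provides decay along a full punctured neighbourhood of directions — together with confirming that the hypotheses of \cite[Cor.XI.9.31]{DS} are met in exactly the form stated there (in particular, whether that result is phrased directly for the resolvent of $T$ or, via the Möbius map $\lambda\mapsto(z_0-\lambda)^{-1}$, for the compact operator $C$ and its resolvent along rays through the origin, in which case one transports the ray system accordingly).
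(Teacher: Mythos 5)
Your proposal is correct and follows exactly the route the paper takes: combine the Schatten-class membership \eqref{T.Sr} with the resolvent decay along rays \eqref{T.res.decay} and invoke \cite[Cor.XI.9.31]{DS}. The paper states this in a single line; you have merely unpacked the bookkeeping (choice of ray system, aperture versus Schatten exponent, the numerical-range refinement), which is sound but not a different argument.
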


Finally, we remark that $T^*$, the adjoint of $T$, is the operator associated with the adjoint form $t^*=a+b^*$, see~\cite[Chap.VI.1.1]{Kato-1966} and \cite[Thm.VI.2.5]{Kato-1966}. We have from the definition of the adjoint form and \eqref{asm:b} that
\begin{equation}\label{b*.cond}
|b^*(\psi_m,\psi_n)| = |\overline{b(\psi_n,\psi_m)}| \leq \frac{M_b}{m^\alpha n^\alpha}.
\end{equation}
So the analogues of results derived for $T$ under assumptions~\eqref{asm:A} and~\eqref{asm:b} are valid also for $T^*$, in particular, the eigensystem of $T^*$ is complete.

\section{Abstract perturbation results}
\label{sec:abstract}
\subsection{Localization of eigenvalues of $T$}
For $N \in \N$ and $h>0$, we define sets 
\begin{equation}\label{Pin.def}
\begin{aligned}
\Pi_0&=\Pi_0(N,h):=\{ z \in \C: - h < \Re z <  \mu_N + \frac \kappa 2 N^{\gamma-1}, |\Im z| <  h\}, \\
\Pi_k&:=\left\{ 
z \in \C: \mu_k - \frac \kappa 2 (k-1)^{\gamma-1}  < \Re z < \mu_k + \frac \kappa 2 k^{\gamma-1}, |\Im z| < \frac \kappa 2 k^{\gamma-1}  
\right\},
\\ \Pi&:=\bigcup_{k \in \N} \Pi_k, \quad \Gamma_0:=\partial \Pi_0, \quad \Gamma_k :=\partial \Pi_k, \quad k \in \N.
\end{aligned}
\end{equation}

\begin{proposition}\label{prop:loc}
Let conditions \eqref{asm:A}, \eqref{asm:b} hold and let $T$ be as in \eqref{T.def}. Then there exist $N>N_0$ and $h>0$ such that 
\begin{equation}\label{T.loc}
\sigma(T) = \spp(T) \subset \Pi_0(N,h) \cup \bigcup_{k > N} \Pi_k,
\end{equation}
where $\Pi_k$ are as in \eqref{Pin.def}. Moreover, with these $N$ and $h$, Riesz projections 
\begin{equation}\label{Pn.SN.def}
\begin{aligned}
S_N:=\frac{1}{2\pi \ii} \int_{\Gamma_0}(z-T)^{-1} \dd z, \quad 
P_n  := \frac{1}{2\pi \ii} \int_{\Gamma_n}(z-T)^{-1} \dd z, \quad n>N,
\end{aligned}
\end{equation}
are well-defined and 
\begin{equation}\label{rank.SN.Pn}
\Rank \, S_N =  N, \qquad \Rank \,  P_n =  1, \quad n>N.
\end{equation}
\end{proposition}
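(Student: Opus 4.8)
The plan is to read everything off the resolvent factorization~\eqref{Tz.res.dec}: for $z\in\rho(A)$ one has $z\in\rho(T)$ as soon as $I-B(z)$ is boundedly invertible, and then $(z-T)^{-1}=K(z)(I-B(z))^{-1}K(z)$. I would first compute the matrix of $B(z)$ in the orthonormal basis $\{\psi_k\}$. Since $K(z)\psi_m=(z-\mu_m)^{-1/2}\psi_m$ and $K(z)^*\psi_n=\overline{(z-\mu_n)^{-1/2}}\,\psi_n$, the defining relation $\langle B(z)f,g\rangle=b(K(z)f,K(z)^*g)$ gives $|\langle B(z)\psi_m,\psi_n\rangle|=|b(\psi_m,\psi_n)|\,|z-\mu_m|^{-1/2}|z-\mu_n|^{-1/2}$, so by~\eqref{asm:b} and $\|\cdot\|\le\|\cdot\|_{\cS_2}$,
\[
\|B(z)\|\le M_b\sum_{n\in\N}\frac{1}{|z-\mu_n|\,n^{2\alpha}},\qquad z\in\rho(A),
\]
a series that converges since $\mu_n\ge cn^\gamma$ by~\eqref{muk.gr} and $2\alpha+\gamma>1$.

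The heart of the proof is to show that this series is $<1$ for every $z$ outside $\Pi_0(N,h)\cup\bigcup_{k>N}\Pi_k$ provided $N>N_0$ and $h$ are large; such a $z$ is automatically in $\rho(A)$, because $\mu_j\in\Pi_0$ for $j\le N$ and $\mu_j\in\Pi_j$ for $j>N$. I would split into cases according to the position of $z$. If $z$ lies outside every $\Pi_k$ with $k>N$, let $\mu_{k_z}$ (so $k_z\ge N$) be the eigenvalue whose value is nearest to $\Re z$; since $z\notin\Pi_{k_z}$ one gets $|z-\mu_{k_z}|\gtrsim k_z^{\gamma-1}$, while for $n\ne k_z$ summing the gap inequality in~\eqref{asm:A} gives $|z-\mu_n|\gtrsim|\mu_n-\mu_{k_z}|\gtrsim|n^\gamma-k_z^\gamma|$; a term-by-term estimate then bounds the series by $C\,k_z^{-(2\alpha+\gamma-1)}\log k_z$, which tends to $0$ as $k_z\to\infty$, and the same estimate (with $k_z=N$) covers the right edge $\Re z=\mu_N+\tfrac\kappa2 N^{\gamma-1}$ of $\Gamma_0$. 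If instead $\Re z\le\mu_N+\tfrac\kappa2 N^{\gamma-1}$ and $z\notin\Pi_0$, then $\Re z\le-h$ or $|\Im z|\ge h$, so $|z-\mu_n|\ge h$ for all $n$ and moreover $|z-\mu_n|\gtrsim\mu_n\gtrsim n^\gamma$ once $\mu_n>2\Re z$; the series is then $\le M_b\sum_n(\max\{h,cn^\gamma/2\})^{-1}n^{-2\alpha}$, which tends to $0$ as $h\to\infty$ — again exactly because $2\alpha+\gamma>1$. Choosing first $N>N_0$ and then $h$ large enough makes all these bounds $<1$, so $I-B(z)$ is invertible there; by~\eqref{Tz.res.dec} this gives $z\in\rho(T)$, and since $T$ has compact resolvent by~\eqref{T.Sr} we have $\sigma(T)=\spp(T)$, proving~\eqref{T.loc}. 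In particular $\Gamma_0$ and all $\Gamma_n$, $n>N$, lie in $\rho(T)$, so $S_N$ and $P_n$ are well-defined. (These summation bounds are precisely the technical lemmas of Section~\ref{sec:tech.lem}, which I would cite rather than redo.)

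For the ranks I would deform $T$ to $A$ along $T_s$, $s\in[0,1]$, associated with the closed sectorial forms $a+sb$; its $B$-operator is $sB(z)$, so the estimate above gives $\|sB(z)\|\le\|B(z)\|<1$ on $\Gamma_0\cup\bigcup_{n>N}\Gamma_n$ uniformly in $s$, and $(z-T_s)^{-1}=K(z)(I-sB(z))^{-1}K(z)$ is jointly continuous (indeed analytic) in $(s,z)$ on $[0,1]$ times these compact contours. Hence $s\mapsto S_N(s):=\tfrac{1}{2\pi\ii}\int_{\Gamma_0}(z-T_s)^{-1}\,\dd z$ and $s\mapsto P_n(s)$ are norm-continuous families of projections, so their ranks are constant in $s$. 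At $s=0$ these are spectral projections of $A$; $\Pi_0$ contains exactly the eigenvalues $\mu_1,\dots,\mu_N$ (because $\mu_N<\mu_N+\tfrac\kappa2 N^{\gamma-1}<\mu_{N+1}$), so $\Rank S_N(0)=N$, while $\Pi_n$ contains only the simple eigenvalue $\mu_n$ ($n>N\ge N_0$), so $\Rank P_n(0)=1$; this gives~\eqref{rank.SN.Pn}.

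The routine part is the homotopy. I expect the real obstacle to be the estimate of $\|B(z)\|$ \emph{uniformly over the whole region between and beyond the strips}, not just on the contours: since~\eqref{asm:A} bounds the gaps $\mu_{k+1}-\mu_k$ only from below, consecutive $\Pi_k$ may be separated by arbitrarily wide gaps, and one must still verify the series stays small there and near $\Gamma_0$ — which is exactly what the summation lemmas of Section~\ref{sec:tech.lem} are built to absorb.
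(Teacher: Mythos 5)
Your proposal is correct and follows essentially the same route as the paper: you derive the same bound $\|B(z)\|\le M_b\sum_n n^{-2\alpha}|z-\mu_n|^{-1}$ (you via the Hilbert--Schmidt norm, the paper via a direct Cauchy--Schwarz computation of $\|B(z)f\|$), then make it $<1$ outside $\Pi_0\cup\bigcup_{k>N}\Pi_k$ by choosing $N$ and $h$ large, invoking the summation estimates of Section~\ref{sec:tech.lem}, and finally establish the ranks by a homotopy argument. Your homotopy $T_s$ (forms $a+sb$, $B$-operator $sB(z)$) is exactly the paper's deformation $I-tB(z)$ under the factorization \eqref{Tz.res.dec}, just phrased via norm-continuity of the projections rather than via continuity of the trace, and your two-region split of the exterior of the boxes is an equivalent repackaging of the paper's three-region split.
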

\begin{proof}
Our first aim is to find $N$ and $h$ such that $(z-T)^{-1}$ is bounded for all $z \notin \Pi_0(N,h) \cup (\cup_{k > N} \Pi_k)$. With the help of the resolvent factorization \eqref{Tz.res.dec}, it suffices to prove that $\|B(z)\| \leq 1/2$ for all such $z$. Let $f = \sum_{j=1}^{\infty} f_j \psi_j \in \H$, then
\begin{equation}\label{loc.est.1}
\begin{aligned}
\|B(z)f\|^2  &= 
\sum_{k=1}^{\infty} |\langle B(z) f, \psi_k \rangle|^2 
= 
\sum_{k=1}^{\infty} 
\left| 
\sum_{j=1}^{\infty} \frac{f_j b(\psi_j,\psi_k) }{(z-\mu_j)^{\frac{1}{2}} (z-\mu_k)^{\frac{1}{2}}}
\right|^2 
\\
&\leq 
M_b^2 \sum_{k=1}^{\infty} 
\frac{1}{k^{2\alpha}|\mu_k-z|}
\left(
\sum_{j=1}^{\infty} \frac{|f_j|}{ j^{\alpha}|\mu_j-z|^{\frac{1}{2}}}
\right)^2 
\\
&
\leq 
M_b^2 
\left(
\sum_{k=1}^{\infty} 
\frac{1}{k^{2\alpha}|\mu_k -z|}
\right)^2
\|f\|^2.
\end{aligned}
\end{equation}
From Lemma \ref{lem:sum.zn}, see \eqref{sigma.n} for the definition of $\sigma_{\omega,\gamma}$, we obtain
\begin{equation}
\sup_{\substack{z \notin \Pi \\[1mm] \Re z \geq \mu_n}}\sum_{k=1}^{\infty} 
\frac{1}{k^{2\alpha}|\mu_k -z|} = \BigO(\sigma_{2\alpha,\gamma}(n)), \quad n \to \infty.
\end{equation}
Hence, we can indeed choose $N>N_0$ such that, for all $z \notin \Pi$ and $\Re z \geq \mu_N$, we have $\|B(z)\| \leq 1/2$. 

The next step is the estimate of $\|B(z)\|$ for $\Re z \leq - h_1$. Splitting the final sum in \eqref{loc.est.1} as in \eqref{sum.ag.n.split}--\eqref{sum.N0.est}, we have
\begin{equation}
\label{h1.est}
\|B(z)\|
\leq 
M_b 
\left(
\frac{\max\{N_0,N_0^{1-2\alpha}\}}{h_1}
+ \sum_{k=N_0+1}^{\infty} \frac{1}{ k^{2\alpha}(\mu_k+h_1)}
\right).
\end{equation}
It follows from Lemma~\ref{lem:sum.dist}, see also \eqref{muk.gr}, and $2\alpha+\gamma>1$ that 
\begin{equation}
\sum_{k=N_0+1}^{\infty} \frac{1}{k^{2\alpha}\mu_k} < \infty,
\end{equation}
thus 
\begin{equation}
\lim_{h_1 \to \infty} \sum_{k=N_0+1}^{\infty} \frac{1}{k^{2\alpha}(\mu_k+h_1)} = 0.
\end{equation}
Hence there is $h_1 >0$ such that, for all $z$ with $\Re z \leq- h_1$, we have $\|B(z)\| \leq 1/2$.

In the third step, we estimate $\|B(z)\|$ for $z$ with $\Re z \in [ - h_2, \mu_N + \frac \kappa 2 N^{\gamma-1}]$ and $|\Im z| \geq h_2$. As in \eqref{h1.est}, we get the estimate
\begin{equation}
\begin{aligned}
\|B(z)\|
&\leq 
M_b 
\left(
\frac{C}{h_2}
+ \sum_{k=2 N}^{\infty} \frac{1}{ k^{2\alpha} \sqrt{(\mu_k-\mu_{N+1})^2 + h_2^2}}
\right)
\end{aligned}
\end{equation}
and conclude that 
\begin{equation}
\lim_{h_2 \to \infty} \sum_{k=2 N}^{\infty} \frac{1}{k^{2\alpha} \sqrt{(\mu_k-\mu_{N+1})^2 + h_2^2}} = 0.
\end{equation}
Thus we can choose $h_2>0$ such that $\|B(z)\| \leq 1/2$ for all $z$ with $\Re z \in [ - h_2, \mu_N + \frac \kappa 2 N^{\gamma-1}]$ and $|\Im z| \geq h_2$. 

In summary, taking $h:=\max\{h_1,h_2\}$, we have that $\|B(z)\| \leq 1/2$ for all $z \notin \Pi_0(N,h) \cup (\cup_{k > N} \Pi_k)$, thus \eqref{T.loc} is proved.

The standard argument, based on \cite[Lemma~VII.6.7]{DS}, shows that 
\begin{equation*}
\Tr \, \frac{1}{2\pi \ii} \int_{\Gamma_n} (z-A)^{-\frac 12}(I-t B(z))^{-1} (z-A)^{-\frac 12} \, \dd z, \quad 0 \leq t \leq 1,   
\end{equation*}
is a continuous integer-valued function, thus it is constant and \eqref{rank.SN.Pn} follows.
\end{proof}

\subsection{Asymptotics of eigenvalues and eigenvectors of $T$}

It follows from Proposition \ref{prop:loc} that the eigenvalues $\{\lambda_n\}$ of $T$ become eventually simple (for $n > N$) and localized around those of the unperturbed operator $A$. The rest of the spectrum is localized in $\Pi_0$.
Therefore for $n >N$, we have $\Tr P_n = 1$, see \eqref{Pn.SN.def}, thus 
\begin{equation}\label{EV.formula}
\begin{aligned}
\lambda_n - \mu_n &= \frac{1}{2  \pi \ii} \Tr \int_{\Gamma_n} z ((z-T)^{-1} - (z-A)^{-1}) \, \dd z
\\
& = \frac{1}{2  \pi \ii} \Tr \int_{\Gamma_n} (z-\mu_n) ((z-T)^{-1} - (z-A)^{-1}) \, \dd z, \quad n >N.
\end{aligned}
\end{equation}
As in \eg~\cite{Elton-2003,Elton-2004-16,Mityagin-2014a,Mityagin-2015-54,Mityagin-toappear}, the further analysis of $\lambda_n$ with $n>N$ relies on the formula \eqref{EV.formula}. 
The eigenvectors $\{\phi_n\}_{n >N}$ of $T$, satisfying $T\phi_n = \lambda_n \phi_n$, are found using
\begin{equation}\label{EF.formula}
\begin{aligned}
\phi_n &= \frac{1}{2  \pi \ii} \int_{\Gamma_n} (z-T)^{-1} \psi_n \, \dd z 
\\
 &= \psi_n + \frac{1}{2  \pi \ii} \int_{\Gamma_n} ( (z-T)^{-1} -(z-A)^{-1})\psi_n \, \dd z 
, \quad n>N.
\end{aligned}
\end{equation}
%


%
\begin{theorem}\label{thm:ev.asym}
Let conditions \eqref{asm:A}, \eqref{asm:b} hold and let $T$ be as in \eqref{T.def}. Denote by $\{\lambda_n\}_{n\in \N}$ the eigenvalues of $T$ so that $\lambda_n \in \Pi_n$ for $n >N$.
Then
\begin{equation}\label{lambda.n.exp}
\lambda_n = \mu_n + \sum_{k=1}^j\lambda_n^{(k)} + r_n^{(j)}, \quad j \in \N, \quad n >N,
\end{equation}
where 
\begin{equation}\label{lambda.n.exp.terms}
\lambda_n^{(k)} =  \frac{1}{2  \pi \ii}  \Tr \int_{\Gamma_n} (z-\mu_n)K(z) B(z)^k K(z) \, \dd z, \quad k \geq 1,
\end{equation}
and
\begin{equation}\label{EV.cor.est}
|\lambda_n^{(k)}| = \BigO \left(n^{\gamma-1} \sigma_{2\alpha,\gamma}(n)^k \right),
\qquad 
|r_n^{(j)}| = \BigO \left(\frac{\sigma_{2\alpha,\gamma}(n)^{j+1}}{n^{2 \alpha}} \right), \quad n \to \infty.
\end{equation}
Moreover, the vectors
\begin{equation}\label{phi_n.def}
\phi_n := \frac{1}{2  \pi \ii} \int_{\Gamma_n} (z-T)^{-1} \psi_n \, \dd z 
= \psi_n + \sum_{k=1}^j{\phi_n^{(k)}} + \rho_n^{(j)}, \quad j \in \N, \quad n >N,
\end{equation}
where
\begin{equation}\label{phi_n.def.terms}
\phi_n^{(k)} = \frac{1}{2  \pi \ii} \int_{\Gamma_n} K(z) B(z)^k K(z) \psi_n \, \dd z
\end{equation}
and
\begin{equation}\label{EF.cor.est}
\|\phi_n^{(k)}\| = \BigO(\sigma_{2\alpha,\gamma}(n)^k), \quad \|\rho_n^{(j)}\| = \BigO(\sigma_{2\alpha,\gamma}(n)^{j+1}), \quad n \to \infty,
\end{equation}
satisfy (with some $N_1 \geq N$)
\begin{equation}\label{phi_n.EF}
T \phi_n = \lambda_n \phi_n, \quad \phi_n \neq 0, \quad n > N_1 \geq N.
\end{equation}
\end{theorem}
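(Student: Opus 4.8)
The plan is to derive everything from the resolvent factorization \eqref{Tz.res.dec} and the Neumann series expansion of $(I-B(z))^{-1}$ along the contours $\Gamma_n$, combined with the decay estimates on $\|B(z)\|$ proven in Proposition~\ref{prop:loc} and the quantitative bounds from the technical lemmas (in particular the function $\sigma_{2\alpha,\gamma}$ from Lemma~\ref{lem:sum.zn}). First I would fix $n > N$ and note that, since $\|B(z)\| \le 1/2$ on $\Gamma_n$ by the proof of Proposition~\ref{prop:loc}, the Neumann series $(I-B(z))^{-1} = \sum_{k\ge 0} B(z)^k$ converges uniformly on $\Gamma_n$. Substituting this into \eqref{Tz.res.dec} and then into \eqref{EV.formula} gives, term by term, the expansion \eqref{lambda.n.exp} with $\lambda_n^{(k)}$ as in \eqref{lambda.n.exp.terms} and the remainder $r_n^{(j)}$ equal to the tail $\sum_{k > j}$ (written as a single integral of $(z-\mu_n)K(z)B(z)^{j+1}(I-B(z))^{-1}K(z)$, using $B^k = B^{j+1}(I-B)^{-1}B^{k-j-1}$ summed, or more directly $\sum_{k>j}B^k = B^{j+1}(I-B)^{-1}$). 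The same substitution in \eqref{EF.formula} produces \eqref{phi_n.def}--\eqref{phi_n.def.terms} with the analogous remainder $\rho_n^{(j)}$; the identity $T\phi_n = \lambda_n\phi_n$ and $\phi_n \ne 0$ for large $n$ follow because $\phi_n$ is (up to the scalar $1$) the image under the rank-one Riesz projection $P_n$ of $\psi_n$, and $\langle \phi_n, \psi_n\rangle \to 1$ so $\phi_n \ne 0$ eventually — this is where $N_1 \ge N$ enters.

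The analytic core is then the size estimates \eqref{EV.cor.est} and \eqref{EF.cor.est}. For these I would parametrize $\Gamma_n$: by \eqref{Pin.def} it has length $\BigO(n^{\gamma-1})$ and on it $|z - \mu_n| = \BigO(n^{\gamma-1})$ while $\dist(z,\sigma(A)) \gtrsim n^{\gamma-1}$, so that in particular $\|K(z)\|^2 = \|(z-A)^{-1}\| = \BigO(n^{1-\gamma})$. The key point is to bound $\|K(z)B(z)^k K(z)\|$ — or rather its trace against $(z-\mu_n)$ — by $\sigma_{2\alpha,\gamma}(n)^k$ times an appropriate power of $n$. I would use that, as already computed inside \eqref{loc.est.1} in the proof of Proposition~\ref{prop:loc}, for $z$ bounded away from $\Pi$ with $\Re z \ge \mu_n$ one has $\|B(z)\| = \BigO(\sigma_{2\alpha,\gamma}(n))$; iterating, $\|B(z)^k\| = \BigO(\sigma_{2\alpha,\gamma}(n)^k)$ on $\Gamma_n$ for $n$ large. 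Combined with the length of $\Gamma_n$ and the factor $|z-\mu_n| = \BigO(n^{\gamma-1})$, the trace integral \eqref{lambda.n.exp.terms} is $\BigO(n^{\gamma-1}\cdot n^{\gamma-1}\cdot \|K(z)B(z)^kK(z)\|)$; here a slightly finer argument than the operator-norm bound $\|K(z)\|^2\|B(z)^k\|$ is needed to get $n^{\gamma-1}\sigma^k$ rather than $n^{0}\sigma^k$ — one absorbs one factor $\|K(z)\|^2 = \BigO(n^{1-\gamma})$ against the $(z-\mu_n)\,\dd z$ which contributes $\BigO(n^{2(\gamma-1)})$, giving $n^{\gamma-1}$ overall. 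For the eigenvector corrections \eqref{EF.cor.est} one drops the $(z-\mu_n)$ factor and instead uses that $K(z)\psi_n$ has norm $\BigO(n^{(1-\gamma)/2})$ with the remaining $K(z)$ contributing another such factor, against the length $\BigO(n^{\gamma-1})$, so the scalar powers of $n$ cancel and only $\sigma_{2\alpha,\gamma}(n)^k$ survives. The remainder estimates come from the same bounds applied to $B(z)^{j+1}(I-B(z))^{-1}$, using $\|(I-B(z))^{-1}\| \le 2$ on $\Gamma_n$.

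The main obstacle I anticipate is the bookkeeping in the estimate of the trace integral \eqref{lambda.n.exp.terms}: getting the \emph{sharp} power $n^{\gamma-1}$ (and not merely $n^{\gamma-1}\cdot n^{1-\gamma}=n^0$ or worse) requires carefully distributing the three small/large factors — the two copies of $K(z)$ each worth $n^{(1-\gamma)/2}$ in norm, the factor $(z-\mu_n)$ worth $n^{\gamma-1}$, and the contour length $n^{\gamma-1}$ — and in fact using the pointwise structure of $K(z)$ on $\Gamma_n$ (the single term $k=n$ in \eqref{K.def} is the dangerous one, with denominator $|z-\mu_n|^{1/2}\asymp n^{(\gamma-1)/2}$, while the off-diagonal terms are summable with a better rate) rather than crude operator norms. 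Concretely, I would split $K(z) = (z-\mu_n)^{-1/2}\langle\cdot,\psi_n\rangle\psi_n + K'(z)$ where $K'(z)$ excludes the $n$-th term, observe that $\langle B(z)^k K'(z)f, \psi_n\rangle$-type quantities decay faster, and track how many copies of the singular factor $(z-\mu_n)^{-1/2}$ can appear — at most two, and they are killed by the $(z-\mu_n)$ in the integrand and by the contour length. This is routine but delicate; once it is done for $k=1$ the general $k$ follows by feeding in $\|B(z)^{k-1}\| = \BigO(\sigma_{2\alpha,\gamma}(n)^{k-1})$. Everything else — term-by-term integration, convergence of the Neumann series, nonvanishing of $\phi_n$ — is a direct consequence of Proposition~\ref{prop:loc} and standard holomorphic functional calculus.
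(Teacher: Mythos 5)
Your overall plan — Neumann expansion of $(I-B(z))^{-1}$ on $\Gamma_n$, term-by-term contour integration, and feeding in the contour length and resolvent bounds — is exactly the paper's strategy, and the derivation of \eqref{lambda.n.exp}, \eqref{phi_n.def} and the recovery of \eqref{phi_n.EF} from the rank-one projection $P_n$ are all fine. But the analytic heart of the estimates has a genuine gap.

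You bound the trace integral in \eqref{lambda.n.exp.terms} by $\BigO(|\Gamma_n|\cdot\sup|z-\mu_n|\cdot\sup\|K(z)B(z)^kK(z)\|)$, i.e.\ you control the \emph{trace} by the \emph{operator norm}. This is false for infinite-rank operators: $|\Tr C|$ is only bounded by the trace-class norm $\|C\|_1$, and $K(z)B(z)^kK(z)$ is not rank-one for $k\geq 1$. You do sense that ``a slightly finer argument'' is needed, but you diagnose the problem as bookkeeping powers of $n$, when in fact the powers already work out at the operator-norm level; what is missing is a reason the operator is trace-class at all with the right quantitative control. The paper supplies this via the Hilbert--Schmidt norm: from \eqref{asm:b} one computes $\|B(z)\|_2^2 = \sum_{j,m}\frac{|b(\psi_m,\psi_j)|^2}{|z-\mu_j||z-\mu_m|} = \BigO(\sigma_{2\alpha,\gamma}(n)^2)$ on $\Gamma_n$, then $\|K(z)B(z)^kK(z)\|_1\le\|K(z)\|^2\|B(z)\|_2^k$ for $k\geq 2$ (H\"older for Schatten norms), and $|\Tr|\le\|\cdot\|_1$. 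Your proposal never introduces $\|B(z)\|_2$, and the alternative you sketch (splitting $K(z)$ into the $n$-th component plus $K'(z)$) still leaves the term $\Tr(K'B^kK')$, which has infinite rank and would again require a trace-class bound — so that route does not avoid the issue, it postpones it.

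There is a second, subtler omission. The Hilbert--Schmidt argument above only makes $K(z)B(z)^kK(z)$ trace-class when $k\geq 2$ (or when $\gamma>1$ so that $K(z)\in\mathcal{S}_2$). For $k=1$ the paper does something different: it integrates \emph{before} estimating, computing the residues $\frac{1}{2\pi\ii}\int_{\Gamma_n}\frac{(z-\mu_n)b(\psi_j,\psi_l)}{(z-\mu_j)(z-\mu_l)}\,\dd z = b(\psi_n,\psi_n)\delta_{jn}\delta_{ln}$, so the integral is the rank-one operator $b(\psi_n,\psi_n)\langle\cdot,\psi_n\rangle$ whose trace is trivially $b(\psi_n,\psi_n)=\BigO(n^{-2\alpha})$. (The paper flags this explicitly in Remark~\ref{rem:ev.cor}.) Your proposal treats $k=1$ on the same footing as $k\geq 2$, which cannot work in the stated generality $\gamma>0$. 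To repair the proof you would need both ingredients: the contour-integration/rank-one identity for $k=1$, and the Hilbert--Schmidt norm $\|B(z)\|_2=\BigO(\sigma_{2\alpha,\gamma}(n))$ together with $|\Tr|\le\|\cdot\|_1$ for $k\geq 2$ and the remainder.
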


\begin{proof}%
If $n >N$ and $z \in \Gamma_n$, we have $\|B(z)\|<1$, see Proposition \ref{prop:loc} and its proof. Thus we can expand $(I-B(z))^{-1}$ into a convergent sum and thereby get
\begin{equation}\label{T.rez.exp}
(z-T)^{-1} - (z-A)^{-1} = K(z) \left(\sum_{m=1}^\infty B(z)^m \right) K(z).
\end{equation}
Inserting~\eqref{T.rez.exp} into \eqref{EV.formula} and \eqref{EF.formula}, we obtain~\eqref{lambda.n.exp}, \eqref{lambda.n.exp.terms} and~\eqref{phi_n.def}, \eqref{phi_n.def.terms}. The claim \eqref{phi_n.EF} follows from \eqref{phi_n.def} and \eqref{EF.cor.est}.

The main part of the proof is the explanation that the operators in~\eqref{lambda.n.exp.terms} are indeed trace class and that the estimates \eqref{EV.cor.est}, \eqref{EF.cor.est} hold. In what follows, we denote the $r$-Schatten class norm by $\|\cdot\|_r$. 
The estimates are done mostly in the same way as in~\cite[Sec.2.4]{Mityagin-2015-54}.

We start with $K(z)B(z)K(z)$, so $k=1$ in \eqref{lambda.n.exp.terms} and \eqref{EV.cor.est}. Using 
\begin{equation}\label{B.psi}
B(z)\psi_j = \sum_{m=1}^{\infty} \langle B(z) \psi_j, \psi_m \rangle \psi_m,
\end{equation}
we obtain
\begin{equation}
K(z)B(z)K(z) \psi_j = \sum_{m=1}^\infty \frac{b(\psi_j,\psi_m)}{(z-\mu_j)(z-\mu_m)} \psi_m, \quad z \in \Gamma_n.
\end{equation}
The integration leads to 
\begin{equation}\label{KBK.int}
\begin{aligned}
&\frac{1}{2  \pi \ii}  \int_{\Gamma_n}  (z-\mu_n) \langle K(z) B(z) K(z) \psi_j, \psi_l \rangle \, \dd z
\\
& \quad = 
\frac{1}{2  \pi \ii}  \int_{\Gamma_n}  \frac{(z-\mu_n)b(\psi_j,\psi_l)  }{(z-\mu_j)(z-\mu_l)}  \, \dd z 
 = b(\psi_n,\psi_n) \delta_{j,n} \delta_{l,n}.
\end{aligned}
\end{equation}
Thus the operator in \eqref{lambda.n.exp.terms} reads
\begin{equation}\label{KBK.op}
\frac{1}{2  \pi \ii}  \int_{\Gamma_n}  (z-\mu_n)  K(z) B(z) K(z)  \, \dd z
 = b(\psi_n,\psi_n) \langle \cdot, \psi_n \rangle, 
\end{equation}
and so it is of rank one and hence trace class. Moreover, $|\lambda_n^{(1)}| = \BigO(n^{-2\alpha})$, $n \to \infty$, so \eqref{EV.cor.est} holds for $k=1$.

Next we show that $B(z) \in \cS_2$ for all $z \in \Gamma_n$, $n >N$, and
\begin{equation}
\sup_{ z \in \Gamma_n} \|B(z)\|_2 = \BigO(\sigma_{2\alpha,\gamma}(n)), \quad n \to \infty.
\end{equation}
Indeed, using again \eqref{B.psi}, for $z \in \Gamma_n$, $n >N$, we obtain
\begin{equation}\label{B.HS}
\begin{aligned}
\|B(z)\|_2^2 &= \sum_{m=1}^\infty |\langle B(z) \psi_m, B(z) \psi_m \rangle |^2 
= 
\sum_{j,m=1}^\infty 
\frac{|b(\psi_m,\psi_j)|^2}{|z-\mu_j| |z-\mu_m|}
\\
& \leq 
M_b^2 \left(\sum_{m=1}^\infty \frac{1}{m^{2\alpha}|z-\mu_m| } \right)^2 
= \BigO(\sigma_{2\alpha,\gamma}(n)^2), \quad n \to \infty;
\end{aligned}
\end{equation}
in the last step, we use Lemma~\ref{lem:sum.zn}. Hence we have $K(z) B(z)^k K(z) \in \cS_1$ for  $k \geq 2$ and all $z \in \Gamma_n$ with $n>N$. From $|\Tr C| \leq \|C\|_1$, see~\cite[Cor.XI.9.8]{DS}, $\|K(z)\|^2 = 1/|z-\mu_n|$ if $z \in \Gamma_n$, \eqref{B.HS}, $|\Gamma_n|=\BigO(n^{\gamma-1})$ as $n \to \infty$ and \cite[Lemma~XI.9.14]{DS}, we have
\begin{equation}
\begin{aligned}
|\lambda_n^{(k)}| & \leq \frac{1}{2  \pi }  \int_{\Gamma_n}  |z-\mu_n| \|K(z)\|^2 \|B(z)\|_2^k \, |\dd z|
\\&
= \BigO(n^{\gamma-1}\sigma_{2\alpha,\gamma}(n)^k), \quad n \to \infty.
\end{aligned}
\end{equation}
Moreover, for the remainder $r_n^{(j)}$, $j \geq 2$, we have
\begin{equation}
\begin{aligned}
|r_n^{(j)}| & \leq \sum_{m=j+1}^\infty \frac{1}{2  \pi }  \int_{\Gamma_n}  |z-\mu_n| \|K(z)\|^2 \|B(z)\|_2^m \, |\dd z|
\\
& = \BigO(n^{\gamma-1}\sigma_{2\alpha,\gamma}(n)^{j+1}), \quad n \to \infty, 
\end{aligned}
\end{equation}
thus~\eqref{EV.cor.est} is proved.
Finally for the eigenvectors, we get from \eqref{phi_n.def.terms}, \eqref{B.HS} that
\begin{equation}
\begin{aligned}
\|\phi_n^{(k)}\| &\leq \frac{1}{2  \pi }  \int_{\Gamma_n}  \|K(z)\|^2 \|B(z)\|_2^k \, |\dd z|
= \BigO(\sigma_{2\alpha,\gamma}(n)^{k}), \quad n \to \infty,
\\
\|\rho_n^{(j)}\|  &\leq \sum_{m=j+1}^\infty \frac{1}{2  \pi }   \int_{\Gamma_n}  \|K(z)\|^2 \|B(z)\|_2^m \, |\dd z| 
= \BigO(\sigma_{2\alpha,\gamma}(n)^{j+1}), \quad n \to \infty,
\end{aligned}
\end{equation}
so \eqref{EF.cor.est} is proved as well. 
\end{proof}

\begin{remark}\label{rem:ev.cor}
In particular, we have
\begin{equation}\label{EV.cor}
\begin{aligned}
\lambda_n^{(1)} &= b(\psi_n,\psi_n), \qquad 
\lambda_n^{(2)} = \sum_{j=1,j\neq n}^{\infty} \frac{|b(\psi_n,\psi_j)|^2}{\mu_n-\mu_j},
\\
\phi_n^{(1)} &= \sum_{j=1,j\neq n}^{\infty} \frac{b(\psi_n,\psi_j)}{\mu_n-\mu_j} \psi_j.
\end{aligned}
\end{equation}
\end{remark}
\begin{proof}
Formulas for $\lambda_n^{(2)}$ and $\phi_n^{(1)}$ follow from \eqref{lambda.n.exp.terms} and \eqref{phi_n.def.terms}, respectively, by the calculation of residues. To derive the formula for $\lambda_n^{(1)}$, we can use~\eqref{KBK.op}; like \eg~ in~\cite[Lemmas~8, 9]{Djakov-2013-255}, it is important to integrate before taking the trace (or norm in the proof of Theorem~\ref{thm:ev.asym}) since all but one term in \eqref{KBK.int} are zero after the integration. 
\end{proof}

\subsection{Riesz basis property of the eigensystem}

Proposition \ref{prop:loc} shows that there are only finitely many eigenvalues of $T$ in $\Pi_0$, namely $\{\lambda_n\}_{n=1}^{N'}$, $N' \leq N$, with algebraic multiplicities $\{m_n\}_{n=1}^{N'}$, $\sum_{n=1}^{N'} m_n = N$. The remaining eigenvalues $\{\lambda_n\}_{n>N}$ are simple. Hence the eigensystem of $T$ contains at most a finite number of root vectors associated with $\{\lambda_n\}_{n=1}^{N'}$ and the rest consists of eigenvectors related to  $\{\lambda_n\}_{n>N}$. For $n>N_1$, these eigenvectors can be selected as $\{\phi_n\}_{n>N_1}$ from Theorem~\ref{thm:ev.asym}. 

\begin{theorem}\label{thm:RB}
Let conditions \eqref{asm:A}, \eqref{asm:b} hold, let $T$ be as in \eqref{T.def}, let $N_1 > N$ be such that \eqref{phi_n.EF} holds for all $n>N_1$ and let $S_{N_1}$ be as in \eqref{Pn.SN.def} with $N=N_1$. Then the set $\{\phi_n\}_{n=1}^\infty$, where $\{\phi_n\}_{n=N_1}^\infty$ is a basis of $\Ran(S_{N_1})$ and $\phi_n$ with $n>N_1$ are as in \eqref{phi_n.def},
is a Riesz basis of $\cH$. 
\end{theorem}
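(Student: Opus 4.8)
The plan is as follows. It suffices, by the equivalent characterization of a Riesz basis in \cite[Thm.VI.2.1]{Gohberg-1969}, to exhibit a bounded, boundedly invertible operator $W$ on $\cH$ mapping the orthonormal basis $\{\psi_k\}$ of $A$ onto the system $\{\phi_n\}_{n=1}^\infty$. Accordingly I would set $W\psi_n:=\phi_n$ on the dense subspace $\lspan\{\psi_k\}$, where $\phi_1,\dots,\phi_{N_1}$ is the chosen basis of $\Ran(S_{N_1})$ and $\phi_n$ for $n>N_1$ are the vectors from \eqref{phi_n.def}, and write $W=I+R$ with $R\psi_n=\phi_n-\psi_n$. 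I would split $R=R_{\mathrm{fin}}+R_{\mathrm{tail}}$, where $R_{\mathrm{fin}}:=\sum_{n=1}^{N_1}\langle\,\cdot\,,\psi_n\rangle(\phi_n-\psi_n)$ has finite rank, and $R_{\mathrm{tail}}$ acts by $\psi_n\mapsto\phi_n-\psi_n$ for $n>N_1$ and $\psi_n\mapsto0$ for $n\le N_1$. The finite-rank part needs no estimate, so everything reduces to controlling $R_{\mathrm{tail}}$.

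For $R_{\mathrm{tail}}$ I would start from the expansion $\phi_n-\psi_n=\sum_{k\ge1}\phi_n^{(k)}$ of Theorem~\ref{thm:ev.asym}: its leading term has entries $\langle\phi_n^{(1)},\psi_m\rangle=b(\psi_n,\psi_m)/(\mu_n-\mu_m)$ for $m\neq n$, bounded by $M_b\,m^{-\alpha}n^{-\alpha}|\mu_m-\mu_n|^{-1}$ via \eqref{asm:b}, while the contributions of $\phi_n^{(k)}$ with $k\ge2$ are of strictly smaller size --- they carry extra factors of $\sigma_{2\alpha,\gamma}(n)=o(1)$ --- which follows from the integral representation \eqref{phi_n.def.terms}, the bound $\|B(z)\|\le\tfrac12$ on $\Gamma_n$, the Hilbert--Schmidt estimate \eqref{B.HS}, and Lemmas~\ref{lem:sum.zn}, \ref{lem:sum.dist}. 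This yields an entrywise bound for the matrix $\big(\langle\phi_n-\psi_n,\psi_m\rangle\big)$ of essentially the shape $C M_b\,m^{-\alpha}n^{-\alpha}|\mu_m-\mu_n|^{-1}$ off the diagonal and $\BigO(\sigma_{2\alpha,\gamma}(n)^2)$ on it. I would then invoke the Schur test for infinite matrices, see~\cite{Schur-1911-140}, \cite[Thm.5.2]{Halmos-1978-96}, with a suitable power weight, estimating the weighted row and column sums by splitting the summation into a part near the diagonal, controlled via the gap condition in \eqref{asm:A}, and a part far from it, controlled via the growth \eqref{muk.gr}. The hypothesis $2\alpha+\gamma>1$ is exactly what makes all these sums finite and, moreover, tend to $0$ as the smallest index present tends to infinity; hence $R_{\mathrm{tail}}$ is bounded on $\cH$ and, being the norm limit of its finite-rank truncations, compact. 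I expect this estimate to be the main obstacle: it is the only genuinely quantitative step and the one that uses the sharp assumption.

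Granting boundedness and compactness of $R$, the operator $W=I+R$ is Fredholm of index $0$, so it remains only to show that $W$ is injective; then $W$ is bijective and hence boundedly invertible by the open mapping theorem, which completes the proof. For injectivity, suppose $x=\sum_n c_n\psi_n\in\cH$ with $Wx=\sum_n c_n\phi_n=0$. Applying the bounded Riesz projection $S_{N_1}$, and using $S_{N_1}\phi_n=\phi_n$ for $n\le N_1$ while for $n>N_1$ the spectral subsets $\Pi_0$ and $\Pi_n$ are disjoint, so $S_{N_1}P_n=0$ and hence $S_{N_1}\phi_n=S_{N_1}P_n\phi_n=0$, one gets $\sum_{n=1}^{N_1}c_n\phi_n=0$, whence $c_1=\dots=c_{N_1}=0$ by linear independence of the basis of $\Ran(S_{N_1})$. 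Then $\sum_{n>N_1}c_n\phi_n=0$, and applying $P_m$ for $m>N_1$ and using $P_mP_n=\delta_{mn}P_n$, $P_m\phi_m=\phi_m$, gives $c_m\phi_m=0$, so $c_m=0$ because $\phi_m\neq0$ by \eqref{phi_n.EF}; thus $x=0$.

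One remark on what is soft. Everything except the Schur-test estimate is routine: the expansion and factorization of the corrections are already contained in Theorem~\ref{thm:ev.asym}, and the invertibility of $W$ rests only on the disjointness of the spectral pieces and the linear independence of the finitely many vectors spanning $\Ran(S_{N_1})$. Instead of the Fredholm argument one could first reduce to $N_1$ so large that $\|R_{\mathrm{tail}}\|<1$ --- any admissible smaller value of $N_1$ yields a system of the prescribed form with a larger parameter, since $\Ran(S_{N_1})$ together with the eigenvectors $\phi_{N_1+1},\dots$ spans a larger spectral subspace --- and then invert $W$ blockwise along the decomposition $\cH=\Ran(S_{N_1})\oplus\Ran(I-S_{N_1})$. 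Either route makes the sharpness of $2\alpha+\gamma>1$ transparent: it is consumed entirely in the convergence of the weighted row and column sums of $\big(\langle\phi_n-\psi_n,\psi_m\rangle\big)$, which diverge logarithmically at the endpoint $2\alpha+\gamma=1$, consistent with the optimality discussed in Section~\ref{subsec:opt}.
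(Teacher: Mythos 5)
Your proof is correct but follows a genuinely different route from the paper's. The paper verifies the Gohberg--Krein criterion directly: it invokes Proposition~\ref{prop:complete} for completeness of $\{\phi_n\}$, constructs a complete biorthogonal system $\{\tilde\phi_n\}$ out of the eigensystem of $T^*$ (using \eqref{b*.cond} and the analogue of Theorem~\ref{thm:ev.asym} for $T^*$), and proves the two Bessel-type estimates $\sum_n|\langle f,\phi_n\rangle|^2<\infty$, $\sum_n|\langle f,\tilde\phi_n\rangle|^2<\infty$ by splitting $\phi_n-\psi_n$ into finitely many corrections $\phi_n^{(k)}$ plus a square-summable remainder $\rho_n^{(j)}$ and applying the Schur test to a matrix whose $n$-th column depends on an $f$-dependent point $z_n\in\Gamma_n$. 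You instead package everything into $W=I+R$ with $W\psi_n=\phi_n$, apply the Schur test once to the fixed matrix $\big(\langle\phi_n-\psi_n,\psi_m\rangle\big)$ to get boundedness and compactness of $R$, and then deduce bounded invertibility of $W$ from Fredholmness plus the injectivity argument driven by the disjointness of the Riesz projections $S_{N_1}$, $P_m$. Your route yields a cleaner Schur test (fixed matrix rather than $f$-dependent) and dispenses both with the biorthogonal-system construction for $T^*$ and, because invertibility of $W$ already forces completeness of $\{\phi_n\}$, with Proposition~\ref{prop:complete}; the price is that you must establish compactness of $R$ (or run your $\|R_{\rm tail}\|<1$ alternative), which the paper does not need. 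The quantitative burden, where $2\alpha+\gamma>1$ is consumed, is the same Schur-test estimate in both arguments. If you write this up in full, two points deserve a line each: first, the entrywise bound $|\langle\phi_n-\psi_n,\psi_m\rangle|\lesssim m^{-\alpha}n^{-\alpha}|\mu_m-\mu_n|^{-1}$ for $m\neq n$ must be derived for all $k\geq2$ and not inferred from the mere norm bound $\|\phi_n^{(k)}\|=\BigO(\sigma_{2\alpha,\gamma}(n)^k)$ --- it does follow from \eqref{f.phi_n^k} specialized to $f=\psi_m$, where the inner sum collapses to a single term; second, for compactness one should record that the weighted Schur sums over $m,n>M$ tend to $0$ as $M\to\infty$, so that $\|P_{>M}R_{\rm tail}P_{>M}\|\to0$ while $R_{\rm tail}-P_{>M}R_{\rm tail}P_{>M}$ has finite rank.
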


\begin{proof}
The proof is based on \cite[Thm.VI.2.1]{Gohberg-1969} and Schur test for infinite matrices, see \eg~\cite{Schur-1911-140}, \cite[Thm.5.2]{Halmos-1978-96}. We need to verify that $\{\phi_n\}$ is complete in $\cH$, there exists a complete system $\{\tilde \phi_n\}$ that is biorthogonal to $\{\phi_n\}$ and we have
\begin{equation}\label{RB.cond.f}
\forall f \in \cH, \quad \sum_{n=1}^\infty |\langle f, \phi_n \rangle|^2 < \infty, \quad \sum_{n=1}^\infty |\langle f, \tilde \phi_n \rangle|^2 < \infty.
\end{equation}

The system $\{\phi_n\}$ is complete by Proposition~\ref{prop:complete}. As the biorthogonal system, we can select vectors $\tilde \phi_n$ from the eigensystem of $T^*$ with 
\begin{equation}
\tilde \phi_n := \frac{1}{\langle \phi_n^*,\phi_n \rangle} \phi_n^*, \quad \phi_n^* := \frac{1}{2  \pi \ii} \int_{\Gamma_n} (z-T^*)^{-1} \psi_n \, \dd z, \quad n >N_1.
\end{equation}
Due to~\eqref{b*.cond}, we obtain as in Theorem~\ref{thm:ev.asym} that
\begin{equation}\label{phi_n*.def}
\phi_n^* = \psi_n + \sum_{k=1}^j{{\phi_n^*}^{(k)}} + {\rho_n^*}^{(j)}, \quad j \in \N,
\end{equation}
where
\begin{equation}\label{phi_n*.def.terms}
{\phi_n^*}^{(k)} = \frac{1}{2  \pi \ii} \int_{\Gamma_n} K(z) (B(\ov z)^*)^k K(z) \psi_n \, \dd z
\end{equation}
and
\begin{equation}\label{EF.cor.est*}
\|{\phi_n^*}^{(k)}\| = \BigO(\sigma_{2\alpha,\gamma}(n)^k), \quad \|{\rho_n^*}^{(j)}\| = \BigO(\sigma_{2\alpha,\gamma}(n)^{j+1}), \quad n \to \infty;
\end{equation}
notice that then $\langle \phi_n^*,\phi_n \rangle = 1 + \BigO(\sigma_{2\alpha,\gamma}(n))$ as $n\to \infty$. Moreover, the system $\{\tilde \phi_n\}$ is complete in $\cH$; see remarks below Proposition~\ref{prop:complete}. 

The crucial step is to show \eqref{RB.cond.f}. We analyze the sum with $\{\phi_n\}$ only. In view of \eqref{b*.cond} and \eqref{phi_n*.def}--\eqref{EF.cor.est*}, the reasoning for the second sum is completely analogous.
Clearly, it suffices to consider the sum for $n > N_1$ only. We give the detailed proof for the case $2\alpha \leq 1$, the other case is similar.

First select $j \in \N$ such that $2(j+1)(2\alpha+\gamma-1)>1$, then, using \eqref{EF.cor.est} and \eqref{sigma.n}, we get 
\begin{equation}\label{f.rho}
\sum_{n=N_1+1}^\infty |\langle f , \rho_n^{(j)} \rangle|^2 
\leq 
\|f\|^2 \sum_{n=N_1+1}^\infty \|\rho_n^{(j)} \|^2 < \infty.
\end{equation}
Thus it remains to estimate $|\langle f, \phi_n^{(k)}\rangle|^2$ for $k=1,\dots,j$. We first derive that
\begin{equation}
B(z)^k \psi_{j_0} = \sum_{j_1,\dots,j_k} 
\left(
\prod_{l=1}^k \langle B(z) \psi_{j_{l-1}}, \psi_{j_l} \rangle
\right) \psi_{j_k}
\end{equation}
and hence from \eqref{phi_n.def.terms}
\begin{equation}\label{phi_n.repr}
\phi_n^{(k)} = 
\frac{1}{2  \pi \ii} \int_{\Gamma_n} 
\sum_{j_1,\dots,j_k} 
\frac{
 b(\psi_n,\psi_{j_1})
\prod_{l=2}^k b(\psi_{j_{l-1}}, \psi_{j_l})}
{(z-\mu_n) \prod_{l=1}^k(z-\mu_{j_l})}
 \psi_{j_k}
\, \dd z
\end{equation}
Decomposing $f = \sum_{m=1}^{\infty} f_m \psi_m$, we obtain from \eqref{phi_n.repr} and \eqref{asm:b} that
\begin{equation}\label{f.phi_n^k}
\begin{aligned}
| \langle f, \phi_n^{(k)} \rangle | 
& \leq 
M_b^k  
\frac{1}{2\pi} \int_{\Gamma_n} 
\sum_{j_1,\dots,j_k}
\frac{|f_{j_k}| \, |\dd z|}
{|z-\mu_n||z-\mu_{j_k}| n^\alpha j_k^\alpha \prod_{l=1}^{k-1} |z-\mu_{j_l}| j_l^{2\alpha} }
\\
& \leq M_b^k 
\left(
\sum_{j=1}^\infty 
\frac{1}{j^{2\alpha}|z_n-\mu_j|} 
\right)^{k-1}
\sum_{m=1}^\infty 
\frac{|f_m| }
{|z_n-\mu_m| m^\alpha n^\alpha } \times
\\ & \quad
\frac{1}{2\pi} \int_{\Gamma_n} \frac{|\dd z|}{|z_n-\mu_n|},
\end{aligned}
\end{equation}
where $z_n \in \Gamma_n$ is such that the maximum of the integrand in the first integral in \eqref{f.phi_n^k} is attained; notice that $z_n$ depends on $f$. From Lemma~\ref{lem:sum.est.2} and  $|\Gamma_n|= \BigO(n^{\gamma-1})$ we get further that there is a constant $C>0$ such that
\begin{equation}\label{f.phi_n^k.2}
\begin{aligned}
| \langle f, \phi_n^{(k)} \rangle |^2 
& \leq C \sigma_{2\alpha,\gamma}(n)^{2(k-1)}
 \left(\sum_{m=1}^\infty 
\frac{|f_m| }
{|z_n-\mu_m| m^\alpha n^\alpha }
\right)^2.
\end{aligned}
\end{equation}
The final step is to estimate the sum of $|\langle f, \phi_n^{(k)} \rangle |^2$ for $n>N_1$. Since $2\alpha+\gamma-1>0$, see also \eqref{sigma.n}, it suffices to consider the case $k=1$ only. For the latter, we get 
\begin{equation}\label{f.phi.M}
\sum_{n=N_1+1}^{\infty} | \langle f, \phi_n^{(1)} \rangle |^2   
\leq C \sum_{n=1}^{\infty}
\left( 
\sum_{m=1}^{\infty} \frac{ |f_m|}{m^\alpha n^{\alpha} |z_n- \mu_m|} 
\right)^2
= 
C \| \mathcal M \tilde f\|^2_{\ell^2(\N)}
\end{equation}
where $\mathcal{M}$ is an operator acting in $\ell^2(\N)$ with matrix elements 
\begin{equation}
\begin{aligned}
\cM_{mn}&=
\frac{1}{m^\alpha n^{\alpha}|z_n-\mu_m|}, & m,n \in \N
\end{aligned}
\end{equation}
and $\tilde{f}=\{|f_m|\} \in \ell^2(\N)$. To estimate $\|\cM\|$ we employ the Schur test. By applying Lemma \ref{lem:sum.zn} and its slight modification for the second sum, we get 
\begin{equation}
\begin{aligned}
\sum_{m=1}^{\infty}|\mathcal{M}_{mn}| \frac{1}{m^\alpha} 
&=
\sum_{m=1}^{\infty} \frac{1}{n^{\alpha}m^{2\alpha}|z_n-\mu_m|}
\leq \frac{C_1}{n^\alpha} \frac{\log e n}{n^{2\alpha+\gamma-1}}
\leq \frac{C_2}{n^\alpha}, 
\\
\sum_{n=1}^{\infty}|\mathcal{M}_{mn}| \frac{1}{n^\alpha} 
&=
\sum_{n=1}^{\infty} \frac{1}{n^{2\alpha}m^{\alpha}|z_n-\mu_m|}
\leq \frac{C_3}{m^\alpha} \frac{\log e m}{m^{2\alpha+\gamma-1}}
\leq \frac{C_4}{m^\alpha}, 
\end{aligned}
\end{equation}
thus the Schur test yields that $\|\cM\| < \infty$. The latter, \eqref{f.phi.M}, \eqref{f.phi_n^k.2} and \eqref{f.rho} show that \eqref{RB.cond.f} holds for $\{\phi_n\}$. 
\end{proof}

\begin{remark}
Let conditions \eqref{asm:A}, \eqref{asm:b} hold and let, in addition, 
\begin{equation}\label{asm:Bari}
\begin{cases}
2\alpha +\gamma > \frac32 & \text{if} \quad \alpha \leq \frac 12,
\\[2mm]
\gamma > \frac12 & \text{if} \quad \alpha > \frac 12.
\end{cases}
\end{equation}
Then the system $\{\phi_n\}_{n=1}^\infty$ from Theorem~\ref{thm:RB} is a Bari basis, namely
\begin{equation}\label{phi_n.Bari}
\sum_{n=1}^\infty \|\psi_n-\phi_n\|^2 < \infty.
\end{equation}

\end{remark}
\begin{proof}
Inserting \eqref{phi_n.def} into \eqref{phi_n.Bari}, we infer that, for $n>N_1$, 
\begin{equation}
\|\psi_n-\phi_n\|^2 \leq 2 (\|\phi_n^{(1)}\|^2 +  \|\rho_n^{(1)}\|^2) 
=\BigO(\sigma_{2\alpha,\gamma}(n)^2), \quad n \to \infty. 
\end{equation}
Thus the conditions \eqref{asm:Bari} imply that \eqref{phi_n.Bari} holds by~\eqref{sigma.n}.
\end{proof}

\section{Technical lemmas}
\label{sec:tech.lem}

\begin{lemma}\label{lem:sum.dist}
Let $\{\mu_k\}_{k\in\N}$ satisfy \eqref{asm:A} and let $j,k \in \N$, $k>j>N_0$. Then
\begin{equation}
\mu_k-\mu_j \geq 
\frac{\kappa}{\gamma}
\begin{cases}
(k-1)^\gamma - (j-1)^\gamma, & \gamma \geq 1,
\\[2mm]
k^\gamma - j^\gamma, & 0 < \gamma < 1.
\end{cases}
\end{equation}
Thus, 
\begin{equation}
\mu_k-\mu_j \geq 
\frac{\kappa}{\gamma}
\left(
(k-1)^\gamma - j^\gamma
\right), \quad \gamma >0.
\end{equation}

\end{lemma}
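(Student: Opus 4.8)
The starting point is telescoping together with assumption \eqref{asm:A}. Since $k>j>N_0$, every gap $\mu_{i+1}-\mu_i$ with $j\le i\le k-1$ is controlled by \eqref{asm:A}, so
\begin{equation}
\mu_k-\mu_j=\sum_{i=j}^{k-1}(\mu_{i+1}-\mu_i)\ge \kappa\sum_{i=j}^{k-1} i^{\gamma-1}.
\end{equation}
It then remains to bound the sum $\sum_{i=j}^{k-1} i^{\gamma-1}$ from below by the stated multiple of $(k-1)^\gamma-(j-1)^\gamma$ or $k^\gamma-j^\gamma$, which is a routine comparison of a sum with an integral; the only point requiring attention is that the monotonicity of $x\mapsto x^{\gamma-1}$ on $(0,\infty)$ flips at $\gamma=1$, which is exactly why the statement splits into two cases.

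For $\gamma\ge 1$ the function $x\mapsto x^{\gamma-1}$ is nondecreasing, hence $i^{\gamma-1}\ge\int_{i-1}^{i}x^{\gamma-1}\,\dd x$ for each $i\ge 1$, and summing over $i=j,\dots,k-1$ gives
\begin{equation}
\sum_{i=j}^{k-1} i^{\gamma-1}\ \ge\ \int_{j-1}^{k-1}x^{\gamma-1}\,\dd x\ =\ \frac{1}{\gamma}\Big((k-1)^\gamma-(j-1)^\gamma\Big).
\end{equation}
For $0<\gamma<1$ the function $x\mapsto x^{\gamma-1}$ is nonincreasing, so $i^{\gamma-1}\ge\int_{i}^{i+1}x^{\gamma-1}\,\dd x$, and summing over $i=j,\dots,k-1$ yields
\begin{equation}
\sum_{i=j}^{k-1} i^{\gamma-1}\ \ge\ \int_{j}^{k}x^{\gamma-1}\,\dd x\ =\ \frac{1}{\gamma}\Big(k^\gamma-j^\gamma\Big).
\end{equation}
Combining each of these with the telescoping inequality above gives the two-case bound.

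Finally, the uniform bound $\mu_k-\mu_j\ge\frac{\kappa}{\gamma}\big((k-1)^\gamma-j^\gamma\big)$ follows from the case bound by a trivial monotonicity remark: if $\gamma\ge 1$ then $(j-1)^\gamma\le j^\gamma$, so $(k-1)^\gamma-(j-1)^\gamma\ge(k-1)^\gamma-j^\gamma$; if $0<\gamma<1$ then $k^\gamma\ge(k-1)^\gamma$, so $k^\gamma-j^\gamma\ge(k-1)^\gamma-j^\gamma$. In either case the right-hand side dominates $\frac{\kappa}{\gamma}\big((k-1)^\gamma-j^\gamma\big)$, which proves the last assertion. There is no real obstacle here; the only thing to be careful about is choosing the correct sub-interval ($[i-1,i]$ versus $[i,i+1]$) in the integral comparison according to the sign of $\gamma-1$, so that the bound points in the right direction.
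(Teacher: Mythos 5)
Your proof is correct and uses essentially the same argument as the paper: telescope $\mu_k-\mu_j$, apply \eqref{asm:A} to each gap, and compare the resulting sum $\sum_{i=j}^{k-1}i^{\gamma-1}$ with an integral, choosing the comparison interval according to the monotonicity of $x\mapsto x^{\gamma-1}$. The only cosmetic difference is that the paper writes the sum as $\sum_{i=j+1}^k(i-1)^{\gamma-1}$ and phrases the $\gamma\ge1$ case via $\int_j^k(x-1)^{\gamma-1}\,\dd x$, but that is just a reindexing/change of variable.
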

\begin{proof}
From \eqref{asm:A}, we get
\begin{equation}
\begin{aligned}
\mu_k-\mu_j &= \sum_{i=j+1}^{k} (\mu_i - \mu_{i-1})
\geq  \kappa \sum_{i=j+1}^{k} (i-1)^{\gamma-1}
\\
&
\geq \kappa 
\begin{cases}
\int_j^k (x-1)^{\gamma-1} \dd x, & \gamma \geq 1,
\\[2mm]
\int_j^k x^{\gamma-1} \dd x, & 0 < \gamma < 1.
\end{cases}
\end{aligned}
\end{equation}
The proof is concluded by the direct integration and simple manipulations.
\end{proof}

\begin{lemma}\label{lem:subord}
Let $A$ and $b$ satisfy~\eqref{asm:A} and~\eqref{asm:b}.
Then for every $\tau \in (0,2\alpha+\gamma-1)$, there exists $C>0$ such that, with $p=\max \{0,1-\tau/\gamma\}$, 
\begin{equation}\label{form.sub.det}
\forall f \in \Dom(a), \quad |b[f]| \leq C (a[f] )^p \|f\|^{2(1-p)}.
\end{equation}
\end{lemma}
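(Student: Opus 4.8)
The plan is to work in the spectral (eigenbasis) representation of $A$, where the statement becomes an interpolation-type inequality for diagonal/bilinear forms. Write $f = \sum_k f_k \psi_k$ with $f_k = \langle f,\psi_k\rangle$; then $\|f\|^2 = \sum_k |f_k|^2$ and, for $f \in \Dom(a)=\Dom(A^{1/2})$, $a[f] = \sum_k \mu_k |f_k|^2$. Using the local form-subordination bound \eqref{asm:b}, estimate
\begin{equation}\label{lem:subord.b.est}
|b[f]| = \Bigl| \sum_{m,n} f_m \overline{f_n}\, b(\psi_m,\psi_n) \Bigr| \leq M_b \sum_{m,n} \frac{|f_m|\,|f_n|}{m^\alpha n^\alpha} = M_b \Bigl( \sum_{k} \frac{|f_k|}{k^\alpha} \Bigr)^2.
\end{equation}
So it suffices to bound $\sum_k k^{-\alpha}|f_k|$ by a suitable product of powers of $\bigl(\sum_k \mu_k|f_k|^2\bigr)$ and $\bigl(\sum_k |f_k|^2\bigr)$.

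Next I would invoke \eqref{muk.gr} (equivalently Lemma~\ref{lem:sum.dist}): there are $c>0$, $k_0$ with $\mu_k \geq c k^\gamma$ for $k > k_0$; the finitely many terms $k \leq k_0$ are harmless and can be absorbed into the $\|f\|^2$ term at the end, so I treat the tail $k>k_0$. The key is a weighted Cauchy--Schwarz split: for a parameter $\theta \in [0,1]$ to be chosen,
\begin{equation}\label{lem:subord.CS}
\sum_{k>k_0} \frac{|f_k|}{k^\alpha} = \sum_{k>k_0} \bigl(\mu_k^{1/2}|f_k|\bigr)^{\theta}\, |f_k|^{1-\theta}\cdot \frac{1}{k^\alpha \mu_k^{\theta/2}} \leq \Bigl(\sum_{k>k_0} \mu_k|f_k|^2\Bigr)^{\theta/2} \Bigl(\sum_{k>k_0}|f_k|^2\Bigr)^{(1-\theta)/2} \Bigl(\sum_{k>k_0} \frac{1}{k^{2\alpha}\mu_k^{\theta}}\Bigr)^{1/2},
\end{equation}
applying Hölder with exponents $2/\theta$, $2/(1-\theta)$, $2$ (valid for $\theta\in(0,1)$; the endpoint cases $\theta=0,1$ are handled separately or by the formula $p=\max\{0,1-\tau/\gamma\}$). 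Using $\mu_k \geq c k^\gamma$, the last factor is finite provided $2\alpha + \gamma\theta > 1$, i.e. $\theta > (1-2\alpha)/\gamma$. Given $\tau \in (0, 2\alpha+\gamma-1)$, one checks that the choice $\theta = 1 - \tau/\gamma$ (when positive; $\theta=1$ otherwise) satisfies this strict inequality, since $2\alpha+\gamma\theta = 2\alpha + \gamma - \tau > 1$. Squaring \eqref{lem:subord.CS} and combining with \eqref{lem:subord.b.est} gives $|b[f]| \leq C\, (a[f])^{\theta} \|f\|^{2(1-\theta)}$ on the tail, and $\theta = p$ with this choice.

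The only remaining point is bookkeeping for the low modes $k\le k_0$ and the endpoint $p=0$ (which occurs when $\tau \geq \gamma$, i.e. when $2\alpha>1$, where $b$ is in fact bounded): for these one just uses $\sum_{k\le k_0} k^{-\alpha}|f_k| \leq C_{k_0}\|f\|$ and, since $(a[f])^p\|f\|^{2(1-p)} \geq \mathrm{const}\cdot\|f\|^2$ is false in general but $a[f] \gtrsim \|f\|^2$ does hold (eigenvalues bounded below away from $0$ after finitely many, and the bottom eigenvalue is positive), absorb the finite part using $\|f\|^{2} \leq C (a[f])^p\|f\|^{2(1-p)}$, which follows from $a[f] \geq (\min_k \mu_k)\|f\|^2 \geq c'\|f\|^2$ with $c'>0$ by \eqref{asm:A}. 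I expect the main (though still routine) obstacle to be precisely this matching of the exponent: verifying that the admissible range $\theta > (1-2\alpha)/\gamma$ forced by convergence of $\sum k^{-2\alpha}\mu_k^{-\theta}$ is exactly compatible with the stated $p = \max\{0,1-\tau/\gamma\}$ for all $\tau \in (0,2\alpha+\gamma-1)$, including a careful treatment of the sign of $2\alpha-1$ and the endpoint $\theta=1$.
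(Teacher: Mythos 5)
Your proposal is correct and follows essentially the same route as the paper: pass to the eigenbasis of $A$, use \eqref{asm:b} to reduce to bounding $\bigl(\sum_k k^{-\alpha}|f_k|\bigr)^2$, split that sum via H\"older using the eigenvalue lower bound $\mu_k\gtrsim k^\gamma$ from \eqref{muk.gr}, and tune the exponent so that the weight sum converges under $2\alpha+\gamma-\tau>1$. The only cosmetic difference is that you perform a single three-factor H\"older with exponents $2/\theta,\ 2/(1-\theta),\ 2$, whereas the paper first applies Cauchy--Schwarz to produce $\bigl(\sum j^{2\beta}|f_j|^2\bigr)\bigl(\sum j^{-2(\alpha+\beta)}\bigr)$ and then interpolates $\sum j^{2\beta}|f_j|^2\le\bigl(\sum j^\gamma|f_j|^2\bigr)^{2\beta/\gamma}\|f\|^{2(1-2\beta/\gamma)}$; with $\theta=2\beta/\gamma$ the two computations coincide. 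Your treatment of the endpoint $p=0$ (which forces $2\alpha>1$ and makes $b$ bounded outright) and of the low modes via $a[f]\ge(\inf_k\mu_k)\|f\|^2>0$ is sound.
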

\begin{proof}
We write $f= \sum_{j=1}^{\infty} f_j \psi_j$. Using Lemma~\ref{lem:sum.dist}, see also~\eqref{muk.gr}, we can verify that the following lower bound for $a[f]$ holds with some $C_1>0$
\begin{equation}\label{a.le}
\begin{aligned}
a[f] = \sum_{j=1}^{\infty} \mu_j |f_j|^2 
\geq C_1 \sum_{j=1}^{\infty} j^\gamma |f_j|^2.
\end{aligned}
\end{equation}
Using the condition~\eqref{asm:b} and H\"older inequality, we get (with $2(\alpha+\beta)>1$, $\gamma > 2 \beta$)
\begin{equation}\label{b.ue}
\begin{aligned}
|b[f]| & = 
\left| 
\sum_{j,k=1}^{\infty} f_j \overline f_k  b(\psi_j,\psi_k)  \right| 
\leq 
M_b 
\left(
\sum_{j=1}^{\infty} \frac{|f_j|}{j^{\alpha}} 
\right)^2
= 
M_b 
\left(
\sum_{j=1}^{\infty} |f_j|j^{\beta} \frac{1}{j^{\alpha+\beta}}  
\right)^2
\\ 
& 
\leq  
M_b \left(\sum_{j=1}^{\infty} j^{2\beta} |f_j|^2 \right)
\sum_{j=1}^{\infty} \frac{1}{j^{2(\alpha+\beta)}}
\leq 
C_2 \left(\sum_{j=1}^{\infty} j^\gamma |f_j|^2 \right)^\frac{2\beta}{\gamma} \|f\|^{2\left(1-\frac{2\beta}{\gamma}\right)}.
\end{aligned}
\end{equation}
The inequality in~\eqref{form.sub.det} follows by combining~\eqref{a.le} and~\eqref{b.ue} and putting $2\beta:=\gamma-\tau$ when $\alpha \leq 1/2$ and $\beta:=0$ when $\alpha>1/2$.
\end{proof}

\begin{lemma}\label{lem:sum.est.2}
Let $n \in \N$, $\gamma >0$ and $\omega + \gamma >1$. Then
\begin{equation}\label{sum.est}
\sum_{k =1, k \neq n}^{\infty} \frac{1}{k^{\omega}|k^\gamma-n^\gamma| } 
= \BigO(\sigma_{\omega,\gamma}(n)),
\end{equation}
where 
\begin{equation}\label{sigma.n}
\sigma_{\omega,\gamma}(n):=
\begin{cases}
n^{-\omega-\gamma+1} \log e n, & \omega \leq 1, 
\\
n^{-\gamma},  & \omega > 1.
\end{cases}
\end{equation}

\end{lemma}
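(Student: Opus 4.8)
The plan is to split $\Sigma(n):=\sum_{k\in\N,\,k\neq n} k^{-\omega}\,|k^\gamma-n^\gamma|^{-1}$ according to the position of $k$ relative to $n$: the ``far below'' range $R_1:=\{1\le k\le n/2\}$, the ``near-diagonal'' range $R_2:=\{n/2<k<2n,\ k\neq n\}$, and the ``far above'' range $R_3:=\{k\ge 2n\}$. Since $\omega+\gamma>1$, the series $\Sigma(n)$ converges for every fixed $n$ (its tail is dominated by $\sum_k k^{-\omega-\gamma}$) and $\sigma_{\omega,\gamma}(n)>0$, so it suffices to establish the bound for all sufficiently large $n$, the remaining finitely many values being absorbed into the $\BigO$-constant; in particular I may then assume all three ranges are nonempty.

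On $R_1$ and $R_3$ the denominator is, up to a constant depending only on $\gamma$, comparable to $\max(k,n)^\gamma$: for $k\le n/2$ one has $|k^\gamma-n^\gamma|\ge(1-2^{-\gamma})n^\gamma$, and for $k\ge 2n$ one has $|k^\gamma-n^\gamma|\ge(1-2^{-\gamma})k^\gamma$. Hence the $R_1$-contribution is $\lesssim n^{-\gamma}\sum_{1\le k\le n/2}k^{-\omega}$, which is $\BigO(n^{-\gamma})$ for $\omega>1$ and $\BigO(n^{1-\omega-\gamma}\log en)$ for $\omega\le 1$; while the $R_3$-contribution is $\lesssim\sum_{k\ge 2n}k^{-\omega-\gamma}=\BigO(n^{1-\omega-\gamma})$ because $\omega+\gamma>1$. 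Using $n^{1-\omega-\gamma}\le n^{-\gamma}$ when $\omega>1$ and comparing with \eqref{sigma.n}, both contributions are $\BigO(\sigma_{\omega,\gamma}(n))$.

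For the near-diagonal part I would use the mean value theorem: $|k^\gamma-n^\gamma|=\gamma\,\xi^{\gamma-1}|k-n|$ for some $\xi$ strictly between $k$ and $n$, and on $R_2$ one has $\xi\in(n/2,2n)$, hence $\xi^{\gamma-1}\ge c_\gamma\,n^{\gamma-1}$ with $c_\gamma=2^{-|1-\gamma|}$ (the monotonicity of $t\mapsto t^{\gamma-1}$ flips at $\gamma=1$, which affects only this constant); likewise $k^\omega$ is comparable to $n^\omega$ on $R_2$ irrespective of the sign of $\omega$, since $n/2<k<2n$. Therefore each term on $R_2$ is $\lesssim n^{1-\omega-\gamma}|k-n|^{-1}$, and summing over $1\le|k-n|<n$ yields a factor $\BigO(\log n)$, so the $R_2$-contribution is $\BigO(n^{1-\omega-\gamma}\log en)$. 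For $\omega\le1$ this is $\BigO(\sigma_{\omega,\gamma}(n))$ by definition, and for $\omega>1$ it is $\BigO(n^{-\gamma})$ since $n^{1-\omega}\log en$ stays bounded. Adding the three contributions gives $\Sigma(n)=\BigO(\sigma_{\omega,\gamma}(n))$.

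I do not anticipate a real obstacle, since this is an elementary dyadic-type splitting; the only place that deserves a moment's care is the uniform two-sided comparison $|k^\gamma-n^\gamma|\asymp n^{\gamma-1}|k-n|$ on $R_2$, where the sign of $\gamma-1$ determines which one-sided bound on $\xi^{\gamma-1}$ is needed — but as $\gamma$ is a fixed parameter this merely enters the implicit constant. The remainder is the routine check that each of the three pieces is dominated by $\sigma_{\omega,\gamma}(n)$ in both regimes $\omega\le1$ and $\omega>1$.
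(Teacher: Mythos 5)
Your proof is correct and follows essentially the same strategy as the paper's: split the sum into a far-below, a near-diagonal, and a far-above region, bound the denominator by $n^\gamma$ (resp.\ $k^\gamma$) in the two far regions, and capture the $\log n$ from the near-diagonal. The only technical variation is that you bound $|k^\gamma - n^\gamma|$ near $k=n$ by the mean value theorem, $|k^\gamma-n^\gamma|\geq c_\gamma n^{\gamma-1}|k-n|$, and then sum the harmonic tail, whereas the paper estimates $\sum_{k}1/|n^\gamma-k^\gamma|$ by an integral comparison with the substitution $y=x/n$ (which amounts to the same derivative information); both are valid and give the stated bound.
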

\begin{proof}
The absolute constant is denoted by the letter $C$ and can vary from line to line. In all estimates below, we assume that $n \in \N$ is sufficiently large. Clearly,
\begin{equation}\label{eq.lem.1}
\sum_{\substack{k =1 \\ k\neq n} }^{\infty} \frac{1}{k^{\omega}|k^\gamma-n^\gamma| }  \leq 
\sum_{k=1}^{n-1} \frac{1}{k^{\omega}(n^\gamma-k^\gamma) } 
+ \sum_{k=n+2}^{\infty} \frac{1}{k^{\omega}(k^\gamma-n^\gamma) }
+ \BigO(n^{-\omega-\gamma+1}).
\end{equation}
The first term on the right of \eqref{eq.lem.1} can be estimated as
\begin{equation}
\begin{aligned}
\sum_{k=1}^{n-1} \frac{1}{k^{\omega}(n^\gamma-k^\gamma) } 
&= 
\left( \sum_{k=1}^{[\frac n2]} + \sum_{k=[\frac n2] }^{n-1} \right) 
\frac{1}{k^{\omega}(n^\gamma-k^\gamma)}
\\
& \leq C \left( 
\frac{1}{n^\gamma} \sum_{k=1}^{n} \frac{1}{k^{\omega}} + \frac{1}{n^\omega} \sum_{k=[\frac n2]}^{n-1}
\frac{1}{n^\gamma-k^\gamma}
\right)
\end{aligned}
\end{equation}
and similarly the second term of \eqref{eq.lem.1} as
\begin{equation}
\begin{aligned}
\sum_{k=n+2}^{\infty} \frac{1}{k^{\omega}(k^\gamma-n^\gamma) } & = 
\left( \sum_{k=n+2}^{2n}  + \sum_{k=2n+1}^{\infty} \right) \frac{1}{k^{\omega}(k^\gamma-n^\gamma)}
\\
& \leq C \left(
\frac{1}{n^\omega} \sum_{k=n+2}^{2n} \frac{1}{k^\gamma-n^\gamma}
+ \sum_{k=2n+1}^{\infty} \frac{1}{k^{\omega + \gamma}}
\right).
\end{aligned}
\end{equation}
For a monotone, continuous, non-negative function $f$ in interval $[a,b]$, $a,b \in \N$, $a<b$, we have
\begin{equation}\label{sum.int.est}
\sum_{i=a}^b f(i) \leq f(a) + f(b) + \int_a^b f(x) \, \dd x;
\end{equation}
$f(a)$ can be omitted if $f$ is increasing and similarly $f(b)$ can be omitted if $f$ is decreasing.
Thus applying \eqref{sum.int.est}, we get
\begin{align}
\sum_{k=1}^{n} \frac{1}{k^{\omega}}
&\leq
1 + \frac{1}{n^\omega} + \int_1^n \frac{\dd x }{x^\omega} 
\leq
C \begin{cases}
n^{1-\omega}, & \omega <1, \\
\log n, & \omega =1, \\
1, & \omega >1,
\end{cases}
\\
\sum_{k=2n+1}^{\infty} \frac{1}{k^{\omega + \gamma}}
& \leq
\frac{1}{(2n+1)^{\omega+\gamma}} + 
\int_{2n+1}^{\infty}  \frac{\dd x}{x^{\omega + \gamma}}
\leq  
\frac{C}{n^{\omega+\gamma-1}}.
\end{align}
Moreover, since $(1-y)/(1-y^\gamma) \to 1/\gamma$ as $y\to 1$, we obtain

\begin{align}\label{n2n.sum.2}
\sum_{k=[\frac n2] }^{n-1} \frac{1}{n^\gamma-k^\gamma} &
\leq 
\frac{1}{n^{\gamma}-(n-1)^\gamma} +  \int_{\frac n2-1}^{n-1} \frac{\dd x}{n^\gamma - x^\gamma} 
\\
& \leq  \frac{C}{n^{\gamma-1}}  \left( 
1 + \int_{\frac 12-\frac1n}^{1-\frac 1n} \frac{\dd y}{1 - y^\gamma}
\right) 
\leq C \frac{\log n}{n^{\gamma-1}},
\\
\sum_{k=n+2}^{2n} \frac{1}{k^\gamma-n^\gamma}&
\leq
\frac{C}{n^{\gamma-1}}  \left(
1+ \int_{1+\frac 2n}^{2} \frac{\dd y}{y^\gamma-1}
\right) 
\leq C \frac{\log n}{n^{\gamma-1}}.
\end{align}
Combing all the inequalities above, we receive \eqref{sum.est}.
\end{proof}

\begin{lemma}\label{lem:sum.zn}
Let	the conditions \eqref{asm:A} and $\omega+\gamma>1$ hold. Then 
\begin{equation}\label{sum.ag.n}
\sup_{\substack{z \notin \Pi \\[1mm] \Re z \geq \mu_n} } \ \sum_{k=1}^{\infty} 
\frac{1}{k^{\omega}|\mu_k -z|} = \BigO(\sigma_{\omega,\gamma}(n)), \quad n \to \infty,
\end{equation}
where $\sigma_{\omega,\gamma}(n)$ is as in \eqref{sigma.n} and $\Pi$ as in \eqref{Pin.def}.
\end{lemma}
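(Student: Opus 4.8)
The plan is to reduce the bound to Lemma~\ref{lem:sum.est.2} by exploiting the geometry of the set $\Pi$ from~\eqref{Pin.def}. Throughout, $n$ is assumed large (in particular $n > N_0$), and $C, c$ denote positive absolute constants that may change from line to line; note that $\omega + \gamma > 1$ is precisely the hypothesis under which Lemma~\ref{lem:sum.est.2} applies. Fix $z \notin \Pi$ with $\Re z \geq \mu_n$, and let $m = m(z)$ be the largest index with $\mu_m \leq \Re z$; then $m \geq n$ and $\mu_m \leq \Re z < \mu_{m+1}$. From~\eqref{asm:A} one checks that the open real projections of the rectangles $\Pi_k$ are pairwise disjoint for $k \geq N_0$, since $\mu_k + \tfrac\kappa2 k^{\gamma-1} \leq \mu_{k+1} - \tfrac\kappa2 k^{\gamma-1}$. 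Hence either (a) $\Re z$ lies in the gap between the real projections of $\Pi_m$ and $\Pi_{m+1}$, that is $\mu_m + \tfrac\kappa2 m^{\gamma-1} \leq \Re z \leq \mu_{m+1} - \tfrac\kappa2 m^{\gamma-1}$, or (b) $\Re z$ lies in the real projection of $\Pi_m$ or of $\Pi_{m+1}$, in which case $z \notin \Pi$ forces $|\Im z| \geq c\,m^{\gamma-1}$.

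Next I would record pointwise lower bounds on $|\mu_k - z|$. Using $|\mu_k - z| \geq \max\{\,|\mu_k - \Re z|,\,|\Im z|\,\}$ together with $\mu_m \leq \Re z < \mu_{m+1}$ and~\eqref{asm:A}, a short computation in each of the cases (a), (b) gives
\begin{gather*}
|\mu_k - z| \geq c\,m^{\gamma-1} \quad \text{for } |k-m| \leq 2, \\
|\mu_k - z| \geq \tfrac12(\mu_m - \mu_k) \ \text{ for } N_0 < k \leq m-3, \qquad
|\mu_k - z| \geq \tfrac12(\mu_k - \mu_{m+1}) \ \text{ for } k \geq m+3 .
\end{gather*}
In case (a) the first bound holds because $\Re z$ is at distance $\geq \tfrac\kappa2 m^{\gamma-1}$ from both $\mu_m$ and $\mu_{m+1}$, while in case (b) it comes from $|\Im z|$; in the two tails the half-widths $\tfrac\kappa2 k^{\gamma-1}$ of the rectangles are absorbed into a fixed fraction of the neighbouring spectral gap via~\eqref{asm:A}.

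Splitting $\sum_{k \geq 1} \tfrac{1}{k^\omega |\mu_k - z|}$ according to these ranges: the $\BigO(1)$ terms with $|k-m| \leq 2$ contribute $\BigO(m^{-(\omega+\gamma-1)})$, which equals $\BigO(\sigma_{\omega,\gamma}(m))$ by~\eqref{sigma.n} in both regimes of $\omega$; the $\BigO(1)$ terms with $k \leq N_0$ satisfy $|\mu_k - z| \geq \mu_m - \mu_{N_0} \geq C m^\gamma$ by~\eqref{muk.gr}, hence contribute $\BigO(m^{-\gamma}) = \BigO(\sigma_{\omega,\gamma}(m))$. For the two tails I would invoke Lemma~\ref{lem:sum.dist}, which for indices $>N_0$ gives $\mu_m - \mu_k \geq \tfrac\kappa\gamma((m-1)^\gamma - k^\gamma)$ and $\mu_k - \mu_{m+1} \geq \tfrac\kappa\gamma((k-1)^\gamma - (m+1)^\gamma)$; for $k \leq m-3$ (resp.\ $k \geq m+3$) the one-unit shift changes $(m-1)^\gamma - k^\gamma$ (resp.\ $(k-1)^\gamma - (m+1)^\gamma$) only by a bounded factor relative to $m^\gamma - k^\gamma$ (resp.\ $k^\gamma - (m+1)^\gamma$), so Lemma~\ref{lem:sum.est.2}, applied with reference index $m$ (resp.\ $m+1$), bounds each tail by $\BigO(\sigma_{\omega,\gamma}(m))$. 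Altogether $\sum_{k\geq 1} \tfrac{1}{k^\omega|\mu_k - z|} = \BigO(\sigma_{\omega,\gamma}(m(z)))$ with constant independent of $z$.

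Finally, since $m(z) \geq n$ and $\sigma_{\omega,\gamma}$ is eventually decreasing (indeed $t \mapsto t^{-(\omega+\gamma-1)}\log(et)$ is eventually decreasing because $\omega+\gamma-1>0$, and $t\mapsto t^{-\gamma}$ is decreasing), for $n$ large one has $\sigma_{\omega,\gamma}(m(z)) \leq \sigma_{\omega,\gamma}(n)$, and taking the supremum over $z$ yields~\eqref{sum.ag.n}. The step I expect to cost the most effort is the geometric one: enumerating the admissible positions of $z$ relative to $\Pi$ and checking, case by case, that the rectangle half-widths $\tfrac\kappa2 k^{\gamma-1}$ — which are comparable to the local spectral gaps — can be absorbed so that $|\mu_k - z|$ stays bounded below, up to a fixed factor, by $|\mu_k - \mu_m|$ (resp.\ $|\mu_k - \mu_{m+1}|$) away from the diagonal; once those bounds are established, the remainder is the routine bookkeeping with Lemmas~\ref{lem:sum.dist} and~\ref{lem:sum.est.2} outlined above.
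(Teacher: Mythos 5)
Your proof is correct and follows essentially the same route as the paper's: you partition the plane according to which spectral gap $\Re z$ falls into (the paper uses the explicit sets $\Xi_j$ for the same purpose), split the sum into a bounded block near the diagonal index $m(z)$, the finitely many terms with $k\le N_0$, and the two tails, and then feed the tails into Lemma~\ref{lem:sum.dist} followed by Lemma~\ref{lem:sum.est.2}. The only cosmetic differences are that you track $m(z)$ rather than the covering sets $\Xi_j$, and that you make explicit the eventual monotonicity of $\sigma_{\omega,\gamma}$ in the final supremum step, which the paper leaves implicit.
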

\begin{proof}
Define sets in $\C$
\begin{equation}\label{Xi.j.def}
\Xi_j:=\left\{ z \notin \Pi \, : \Re z \in [\mu_{j-1}+\frac \kappa 2(j-1)^{\gamma-1},\mu_{j+1}-\frac \kappa 2 j^{\gamma-1}] \right\}, \quad j \in \N;
\end{equation}
note that we can cover the region $\{z \notin \Pi \,: \, \Re z \geq \mu_n\}$ by $\cup_{j \geq n} \Xi_j$. In all estimates below, we assume that $n$ is sufficiently large, in particular $n > N_0+3$.

As we do not have information on $\{\mu_k\}_{k=1}^{N_0}$, we split the sum in \eqref{sum.ag.n},
\begin{equation}\label{sum.ag.n.split}
\sum_{k=1}^{\infty} 
\frac{1}{k^{\omega}|\mu_k -z|}
=	\left(
	\sum_{k=1}^{N_0} 
	+ 
	\sum_{k=N_0+1}^{\infty} 
	\right)
	\frac{1}{k^{\omega}|\mu_k -z|}
\end{equation}
and do a rough estimate of the first finite sum. Namely, using Lemma \ref{lem:sum.dist} in the last step, we get
\begin{equation}\label{sum.N0.est}
\sup_{\substack{z \notin \Pi \\[1mm] \Re z \geq \mu_n}} \, \sum_{k=1}^{N_0} 
\frac{1}{k^{\omega}|\mu_k -z|}
\leq
\frac{\max\{N_0,N_0^{1-\omega}\}}{\mu_n-\mu_{N_0}} = \BigO(n^{-\gamma}), \quad n \to \infty.
\end{equation}
The second sum in \eqref{sum.ag.n.split} is estimated with the help of Lemmas \ref{lem:sum.dist} and \ref{lem:sum.est.2},
\begin{equation}\label{sum.inf.est}
\begin{aligned}
&\sup_{\substack{z \notin \Pi \\[1mm] \Re z \geq \mu_n}} \, \sum_{k=N_0+1}^{\infty} 
\frac{1}{k^{\omega}|\mu_k -z|} 
\leq \sup_{j\geq n}\sup_{z_j \in \Xi_j} \, \sum_{k=N_0+1}^{\infty} 
\frac{1}{k^{\omega}|\mu_k -z_j|} 
\\
& \qquad \leq \sup_{ j \geq n} 
\left(
\sum_{k=N_0+1}^{j-3} \frac{1}{k^\omega (\mu_{j-1} -\mu_k)} 
+  \sum_{k=j+3}^{\infty} \frac{1}{k^\omega (\mu_{k} -\mu_{j+1})}
\right)
\\& \qquad \quad 
+ \sup_{j\geq n} \sup_{z_j \in \Xi_j} \sum_{k=j-2}^{j+3} \frac{1}{k^\omega |\mu_k -z_j|}.
\end{aligned}
\end{equation}
From the definition of sets $\Xi_j$, see \eqref{Xi.j.def}, the last term on the right of \eqref{sum.inf.est} is $\BigO(n^{1-\omega-\gamma})$ as $n \to \infty$. The remaining terms in \eqref{sum.inf.est} are estimated using Lemma~\ref{lem:sum.dist} in the first step and Lemma~\ref{lem:sum.est.2} in the second step
\begin{equation}\label{sum.inf.est.f}
\begin{aligned}
&\sup_{j\geq n} \left(
\sum_{k=N_0+1}^{j-3} \frac{1}{k^\omega (\mu_{j-1} -\mu_k)} 
+  \sum_{k=j+3}^{\infty} \frac{1}{k^\omega (\mu_{k} -\mu_{j+1})}\right)
\\
& \ \ \leq 
\frac{\kappa}{\gamma} \sup_{j\geq n}  
\left(
\sum_{k=N_0+1}^{j-3} \frac{1}{k^\omega \left((j-2)^\gamma - k^\gamma \right)} 
+ \sum_{k=j+3}^{\infty} \frac{1}{k^\omega ((k-1)^\gamma -(j+1)^\gamma)}
\right)
\\
& \quad = \BigO(\sigma_{\omega,\gamma}(n)), \quad n \to \infty.
\end{aligned}
\end{equation}
Putting \eqref{sum.ag.n.split}--\eqref{sum.inf.est.f} together, we get \eqref{sum.ag.n}.
\end{proof}

\section{Simple examples}
\label{sec:ex.sim}

We analyze perturbations of several simple operators. First in Section \ref{subsec:opt}, following the constructions in \cite[Sec.6.3]{Adduci-2012-10} and \cite[Sec.8.1]{Adduci-2012-73}, we provide examples of self-adjoint operators $A$ and perturbations $B$ showing that the condition $2\alpha + \gamma> 1$ cannot be weakened to $2\alpha + \gamma= 1$.
Next, we consider perturbations of finite band infinite matrices and $-\dd^2/\dd x^2$ on a finite interval with Neumann boundary conditions, see Sections \ref{subsec:finite.band}, \ref{subsec:Neumann}, respectively. In both subsections, we give conditions on the self-adjoint operator and the perturbations guaranteeing that the assumptions \eqref{asm:A} and \eqref{asm:b} hold and thus the results of Section~\ref{sec:abstract} are applicable. In particular, the eigensystem of the perturbed operator $T$ contains a Riesz basis. 

Some of the conclusions in Sections \ref{subsec:finite.band}, \ref{subsec:Neumann} are certainly not new, see \eg~\cite{Adduci-2010-91} for eigenvalue analysis of tridiagonal matrices, \cite[Chap.XIX.3]{DS} or \cite{Mikhajlov-1962-3} for Riesz basis property of perturbations of $-\dd^2/\dd x^2$ in boundary conditions or \cite{Djakov-2009-481,Djakov-2009-6,Djakov-2010-203} for perturbations by singular potentials, nevertheless, the goal of these sections is to demonstrate the flexibility of our approach. For instance, $-\dd^2/\dd x^2$ with an infinite number of complex $\delta$-interactions can be treated with the same amount of effort as the perturbation by a bounded potential.

\subsection{Optimality of the condition~\eqref{asm:b}}
\label{subsec:opt}

Consider $\H = \ell^2(\N)$, its standard basis $\{e_k\}$ and define $A e_k := \mu_k e_k$, $k\in \N$, where $\mu_k =k^\gamma$, $\gamma >0$. We denote by 
\begin{equation}
d_k:=\frac 12 (\mu_{2k}-\mu_{2k-1}) = 2^{\gamma -2} k^{\gamma-1} (\gamma + \BigO(k^{-1})) , \quad k \to \infty,
\end{equation}
and define the perturbation
\begin{equation}
B e_{2k-1} := -d_k t_k e_{2k}, \quad  B e_{2k} := d_k t_k e_{2k-1}, \quad k \in \N,
\end{equation}
where $\{t_k\} \subset (0,1)$ and $t_k \to 1$ as $k \to \infty$. 

While the condition \eqref{asm:A} is clearly satisfied, regarding \eqref{asm:b} we get
\begin{equation}
|\langle B e_m, e_n \rangle| \leq \min \{ \|Be_m\|, \|Be_n\|\} \leq \|Be_m\|^\frac 12 \|Be_n\|^\frac 12 \leq C (mn)^\frac{\gamma-1}{2},
\end{equation}
so $2\alpha + \gamma= 1$. We show below by elementary explicit calculations that the eigensystem of $T:=A+B$ does not contain even a basis.

Since the perturbation $B$ is block-diagonal, it suffices to analyze the 2-dimensional blocks corresponding to $\lspan\{e_{2k-1},e_{2k}\}$,
\begin{equation}
T_k:=A_k + B_k =
\begin{pmatrix}
\mu_{2k-1} + d_k & 0 \\
0 & \mu_{2k-1} +d_k
\end{pmatrix}
+
d_k
\begin{pmatrix}
-1  & t_k \\
-t_k & 1
\end{pmatrix}.
\end{equation}
Eigenvalues of $T_k$ can be of course calculated explicitly, see \cite[Sec.6.3]{Adduci-2012-10} and \cite[Sec.8.1]{Adduci-2012-73} for details, namely
\begin{equation}
\begin{aligned}
T_k g_k^\pm &= (\mu_{2k-1} + d_k \pm d_k \tau_k) g_k^\pm, 
\\ \tau_k &= \sqrt{1-t_k^2}, \quad 
g_k^\pm = \begin{pmatrix}
1 \\ G_k^\pm
\end{pmatrix},
\quad 
G_k= \left(\frac{1+\tau_k}{1-\tau_k} \right)^\frac 12.
\end{aligned}
\end{equation}
The norms of spectral projections $P_k^\pm$ of $T$ related to eigenvalues $\mu_{2k-1} + d_k \pm d_k \tau_k$ are explicit as well. We denote by $\{(g_k^\pm)^*\}$ the biorthonormal vectors to $\{g_k^\pm\}$, \ie~$\langle g_k^\nu, (g_k^\mu)^* \rangle = \delta_{\nu,\mu}$, then
\begin{equation}
\|P_k^\pm\| = \| \langle \cdot, (g_k^\pm)^* \rangle g_k^\pm \| = \|(g_k^\pm)^*\| \|g_k^\pm \| = \frac{1}{\tau_k^2} = \frac{1}{1 - t_k^2}.
\end{equation}
Hence for $\{t_k\}$, $t_k \to 1$, the eigensystem of $T$ does not contain a basis. 

\subsection{Finite band infinite matrices}
\label{subsec:finite.band}
Let $\cH=\ell^2(\N)$, $\gamma >0$ and 
\begin{equation}
A =
\begin{pmatrix}
a_1 & 0 & 0 & 0 & . 
\\
0 & a_2 & 0 & 0 & . 
\\
0 & 0 & a_3 & 0 & . 
\\
0 & 0 & 0 & a_4 & . 
\\
. & . & . & . & . 
\end{pmatrix},
\qquad
B =
\begin{pmatrix}
b_1^{(0)} & b_1^{(1)} & 0 & 0 & . 
\\
b_1^{(-1)} & b_2^{(0)} & b_2^{(1)} & 0 & . 
\\
0 & b_2^{(-1)} & b_3^{(0)} & b_3^{(1)} & . 
\\
0 & 0 & b_3^{(-1)} & b_4^{(0)} & . 
\\
. & . & . & . & . 
\end{pmatrix},
\end{equation}
where, with some $M>0$, 
\begin{equation}
a_k = k^\gamma, \quad |b_k^{(j)}| \leq M k^\omega, \quad j\in \{-1,0,1\}, \ k \in \N.
\end{equation}
Clearly, $A$ considered with the maximal domain satisfies assumption \eqref{asm:A} with $\psi_k = e_k$, $k \in \N$, where $\{e_k\}$ is the standard basis of $\ell^2(\N)$, and $\mu_k = k^\gamma$, $k \in \N$.
The form $b=\langle B\cdot, \cdot \rangle$, generated by $B$, satisfies 
\begin{equation}
|b(\psi_m, \psi_n)| \leq C \min \{m^\omega, n^\omega\} \leq C m^{\frac\omega 2} n^{\frac\omega 2},
\end{equation}
where $C>0$ is independent of $m,n$. 
Thus, the condition \eqref{asm:b} is satisfied if 
\begin{equation}\label{fin.band.cond}
\omega < \gamma -1.
\end{equation}
The tri-diagonal perturbation $B$ can be replaced by a finite band matrix with off-diagonal sequences $\{b_k^{(j)}\}_{k=1}^\infty$, $j \in \{-j_0,\dots,j_0\}$, satisfying
\begin{equation}
|b_k^{(j)}| \leq M k^\omega, \quad j\in \{-j_0,\dots,j_0\}, \ k \in \N
\end{equation}
and it is easy to see that the condition~\eqref{asm:b} is satisfied if \eqref{fin.band.cond} holds.

\subsection{Perturbations of  $-\dd^2/\dd x^2$ on a finite interval with Neumann boundary conditions.}
\label{subsec:Neumann}

Let $l \in (0,\infty)$ and consider the self-adjoint operator $A$ and the associated form $a$
\begin{equation} \label{NL.def}
\begin{aligned}
A &= - \frac{\dd^2}{\dd x^2}, & \Dom(A) & = \{ \psi \in W^{2,2}(-l,l): \psi'(\pm l)=0\}, \\
a[\psi] & = \|\psi'\|^2, & \Dom(a) & = \{ \psi \in W^{1,2}(-l,l)\}.
\end{aligned}
\end{equation}
Eigenvalues of $A$ and related orthonormal eigenfunctions read
\begin{equation}\label{psi.cos}
\mu_k = \left(\frac{k \pi}{2l} \right)^2, \quad k \in \N_0, \qquad \psi_k(x) = 
\begin{cases}\displaystyle
\frac{1}{\sqrt{2l}}, & k =0,\\[4mm]
\displaystyle
\frac 1{\sqrt{l}} \cos(\sqrt{\mu_k} (x+l)),&  k \in \N, 
\end{cases}
\end{equation}
thus the condition \eqref{asm:A} is satisfied with $\gamma=2$.
We analyze several perturbations of this $A$, in boundary conditions, by $\delta$-interactions, by $L^1$ and singular potentials. Results and proofs for perturbations of $-\dd^2/\dd x^2$ with Dirichlet boundary are completely analogous.

\subsubsection{Robin boundary conditions}
Consider the form 
\begin{equation}
\begin{aligned}
b_{\rm R}[\psi] &:= \nu_+ |\psi(l)|^2 - \nu_- |\psi(-l)|^2, \quad \nu_\pm \in \C, \quad \psi \in \Dom(a).
\end{aligned}
\end{equation}
The m-sectorial operator $T_{\rm R}$ associated with the form $t:=a+b_{\rm R}$ is actually $-\dd^2/\dd x^2$ with Robin boundary conditions
\begin{equation} \label{TR.def}
T_{\rm R}  = -\frac{\dd^2}{\dd x^2}, \qquad
\Dom(T_{\rm R}) = \{ \psi \in W^{2,2}(-l,l): \psi'(\pm l)+ \nu_\pm \psi(\pm l)=0, \},
\end{equation}
see \eg~\cite[Ex.VI.2.16]{Kato-1966}.
Since $\{\psi_m\}$ from \eqref{psi.cos} are uniformly bounded, we have $\sup_{m,n \in \N_0}|b_{\rm R}(\psi_m,\psi_n)| < \infty$, thus $b_{\rm R}$ satisfies the condition \eqref{asm:b} with $\alpha=0$.

\subsubsection{$\delta$-interactions}
\label{subsubsec:Lapl.delta}
The $\delta$-potential placed at $x_0$ with a complex coupling $\nu$ generates the form 
\begin{equation}
b_\delta[\psi] := \nu |\psi(x_0)|^2, \quad \nu \in \C, \quad x_0 \in (-l,l), \quad \psi \in \Dom(a).
\end{equation}
It satisfies the condition \eqref{asm:b} with $\alpha=0$ since $|b_\delta(\psi_m,\psi_n)| \leq |\nu|/l$, see \eqref{psi.cos}. In fact, the condition \eqref{asm:b} is satisfied with $\alpha=0$ also for an infinite number of $\delta$'s
\begin{equation}
\sum_{k=1}^\infty \nu_k \delta(x-x_k), \quad \{\nu_k\} \in \ell^1(\N), \quad \{x_k\} \subset (-l,l). 
\end{equation}
Indeed, the corresponding form reads
\begin{equation}
b_\delta^\infty[\psi] := \sum_{k=1}^\infty \nu_k |\psi(x_k)|^2, \quad \{\nu_k\} \in \ell^1(\N), \quad \{x_k\} \subset (-l,l), \quad \psi \in \Dom(a)
\end{equation}
and we have $|b_\delta^\infty(\psi_m,\psi_n)| \leq \|\nu\|_{\ell^1(\N)}/l$, see \eqref{psi.cos}.

\subsubsection{$L^1$-potential} A function $V \in L^1(-l,l)$ generates the form
\begin{equation}\label{V.form.int}
b_V[\psi]:= \int_{-l}^l V(x) \, |\psi(x)|^2 \, \dd x, \quad  \psi \in \Dom(a).
\end{equation}
Since $|b_V(\psi_m,\psi_n)| \leq \|V\|_{L^1(-l,l)}/l$, see~\eqref{psi.cos}, 
the condition \eqref{asm:b} is satisfied with $\alpha =0$. Notice that the classical formula is recovered by using Riemann-Lebesgue lemma, namely, the first correction $\lambda_n^{(1)}$ to eigenvalues of $A$, see Theorem~\ref{thm:ev.asym}, reads
\begin{equation}\label{SL.la.asym}
\begin{aligned}
\lambda_n^{(1)} &= \frac 1 l \int_{-l}^l V(x) \cos^2 (\sqrt{\mu_n}(x+l)) \, \dd x 
\\ &= \frac 1 {2l} \int_{-l}^l V(x) \, \dd x 
+ \frac{1}{2l} \int_{-l}^l V(x) \cos (2\sqrt{\mu_n}(x+l)) \, \dd x 
\\
&= \frac 1 {2l} \int_{-l}^l V(x) \, \dd x  + o(1), \quad n \to \infty.
\end{aligned}
\end{equation}

\subsubsection{Singular potentials} 
\label{subsubsec:Lapl.sing}
Consider $V \in W^{-s,2}(-l,l)$ with some $ s \geq 0$, so 
\begin{equation}\label{V.sing.N}
\exists C>0, \ \exists s \geq 0, \ \forall \phi \in W^{1,2}(-l,l), \ |(V,\phi)| \leq C \|\phi\|_{W^{1,2}(-l,l)}^s \|\phi\|^{1-s}.
\end{equation}
The associated form, a generalization of \eqref{V.form.int}, reads
\begin{equation}
b_V(\phi,\psi):=(V,\phi\overline{\psi}), \quad \phi, \psi \in \Dom(a).
\end{equation}
For $\{\psi_m\}$ from \eqref{psi.cos}, we have $\|\psi_m\| = \BigO(1)$  and $\|\psi_m'\| = \BigO(m)$ as $m \to \infty$, thus we get from \eqref{V.sing.N} that
\begin{equation}
\begin{aligned}
|b_V(\psi_m,\psi_n)| &= |(V,\psi_m \psi_n)| 
\leq 
C (\|(\psi_m \psi_n)'\|^2 +  \|\psi_m \psi_n\|^2)^\frac s2 \|\psi_m \psi_n\|^{1-s}
\\
& 
\leq 
C_1 (m+n)^s 
\leq
C_2 (mn)^s.
\end{aligned}
\end{equation}
Hence $b_V$ satisfies the condition \eqref{asm:b} if $s < 1/2$.

\section{Perturbations of single-well Schr\"odinger operators} 
\label{sec:aho}

Our main examples are perturbations of self-adjoint Schr\"odinger operators $A$ in $L^2(\R)$ with the associated quadratic forms $a$
\begin{equation} \label{aho.def}
\begin{aligned}
A &= -\frac{\dd^2}{\dd x^2} + Q(x),&
\Dom(A)&  = \{ \psi \in W^{2,2}(\R): Q \psi \in L^2(\R)\}, 
\\
a[\psi] &= \|\psi'\|^2 + \|Q^\frac 12 \psi\|^2, &
\Dom(a)& = \{\psi \in W^{1,2}(\R): Q^\frac12 \psi \in L^2(\R) \}.
\end{aligned}
\end{equation}
The real potential $Q$ is assumed to satisfy the following.
\setcounter{asmR}{16}
\begin{asmR}\label{asm:Q}
Suppose that $Q \in C^2(\R) \cap C^3(\R\setminus [-l,l])$ with some $l \geq 0$, $Q \geq 0$, $Q(0)=0$, $Q$ is even, increasing on $(0,\infty)$, $Q'$ is non-decreasing on $(0,\infty)$,  
\begin{equation}\label{Q.asym}
\frac{Q^{(k+1)}(x)}{Q^{(k)}(x)} = \BigO \left( \frac1x \right), \quad x \to + \infty, \qquad k =0,1,2,  
\end{equation}
and
\begin{equation}\label{Q.beta}
\exists \beta > 1, \quad \lim_{x \to \infty} \frac{Q(x)}{|x|^\beta} = 1.
\end{equation}
\end{asmR}

Under Assumption~\ref{asm:Q}, the Schr\"odinger operator $A$ in \eqref{aho.def} is 
self-adjoint, bounded from below and has a compact resolvent; see \eg~\cite[Thm.XII.67]{Reed4}.

Recall that \eqref{Q.asym} with $k=0$ and Gronwall's inequality imply that $Q$ cannot grow faster than a polynomial, nonetheless, we assume the precise behavior at infinity in \eqref{Q.beta}. Moreover, \eqref{Q.asym} implies further that, for every $\sigma \in (0,\infty)$ and $k\in\{0,1,2\}$
\begin{equation}\label{Q.sigma}
\frac{Q^{(k)}(\sigma x)}{Q^{(k)}(x)} = \BigO(1), \quad x \to \infty,
\end{equation}
see \cite[\S 22.27]{Titchmarsh-1958-book2}. For $k=0$, \eqref{Q.sigma} follows from (in the non-obvious case $\sigma>1$)
\begin{equation}\label{Q.sigma.log}
\log \frac{Q(\sigma x)}{Q(x)} = \int_x^{\sigma x} \frac{Q'(t)}{Q(t)} \, \dd t = \BigO \left(\int_x^{\sigma x} \frac{\dd t}{t}  \right) = \BigO(1), \quad x \to \infty;
\end{equation}
the other cases are similar. The additional condition \eqref{Q.beta} implies more; namely, for every $\sigma \in (0,\infty)$,
\begin{equation}\label{Q.sigma.beta}
\lim_{x \to +\infty}\frac{Q(\sigma x)}{Q(x)} = \sigma^\beta \lim_{x \to +\infty}  \frac{Q(\sigma x) }{(\sigma x)^\beta} \frac{x^\beta}{Q(x)} = \sigma^\beta.
\end{equation}
Recall also that since $Q'$ is non-decreasing on $(0,\infty)$, we have
\begin{equation}\label{Q.convex}
Q(y) - Q(x) \leq Q'(y) (y-x), \quad x,y>0.
\end{equation}

We define the (positive) turning points $x_\mu$ and the associated quantity $a_\mu$ for $Q'$ by relations 
\begin{equation}\label{xmu.def}
Q(x_\mu) = \mu,\quad  x_\mu > 0, \qquad a_\mu:=Q'(x_\mu).
\end{equation}
Notice that the assumption \eqref{Q.beta} implies
\begin{equation}\label{x.mu.beta}
\lim_{\mu \to +\infty}\frac{\mu^\frac 1\beta}{x_\mu} = \lim_{\mu \to +\infty} \left(\frac{Q(x_\mu)}{x_\mu^\beta} \right)^\frac 1\beta =  1.
\end{equation}

\subsection{Eigenvalues of $A$}

The spectrum of $A$ contains only simple discrete eigenvalues $\{\mu_k\}_{k \in \N} \subset \R_+$ which are known to obey, see~\cite[Sec.7.7]{Titchmarsh-1962-book1}, 
\begin{equation}\label{EV.Tit}
\int_{-x_{\mu_k}}^{x_{\mu_k}} (\mu_k - Q(x))^\frac 12\, \dd x = \left( k+\frac{1}{2}\right) \pi + \BigO(k^{-1}), \quad k \to \infty.
\end{equation}
The result \eqref{EV.Tit} holds also if \eqref{Q.beta} is replaced by
\begin{equation}\label{Q.unbd}
\lim_{x \to \infty} Q(x) = +\infty.
\end{equation}
More can be said under the additional condition \eqref{Q.beta}.

\begin{proposition}\label{prop:Q.gaps}
Let $Q$ satisfy Assumption~\ref{asm:Q}. Then the eigenvalues $\{\mu_k\}$ of the operator $A$ in \eqref{aho.def} satisfy
\begin{align}
\mu_k &= 
\left( \frac{\pi}{\Omega_\beta} k \right)^\gamma(1+o(1)), & k \to \infty, \label{Q.beta.EV.asym}
\\
\mu_{k+1} - \mu_k & = \frac{2\pi}{\Omega_\beta'} 
\left(
\frac{\pi}{\Omega_\beta} k
\right)^{\gamma-1}   (1+o(1)), & k \to \infty, \label{Q.beta.EV.gaps}
\end{align}
where 
\begin{equation}\label{Omega.beta}
\gamma = \frac{2 \beta}{\beta+2}, \qquad \Omega_\beta= 2 \int_0^1(1-t^\beta)^\frac 12 \, \dd t, \qquad  \Omega_\beta' = 2 \int_0^1 \frac{\dd t}{(1-t^\beta)^\frac12}.
\end{equation}
\end{proposition}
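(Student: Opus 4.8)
The plan is to extract the asymptotics of $\mu_k$ and of the gaps $\mu_{k+1}-\mu_k$ from the Bohr--Sommerfeld type quantization rule \eqref{EV.Tit}. First I would rewrite the left-hand side of \eqref{EV.Tit} by the substitution $x = x_{\mu_k} t$, so that
\begin{equation*}
\int_{-x_{\mu_k}}^{x_{\mu_k}} (\mu_k - Q(x))^{\frac12}\,\dd x
= x_{\mu_k}\,\mu_k^{\frac12}\int_{-1}^{1}\left(1-\frac{Q(x_{\mu_k}t)}{\mu_k}\right)^{\frac12}\,\dd t .
\end{equation*}
Now \eqref{Q.beta} gives $Q(x_{\mu_k}t)/\mu_k = Q(x_{\mu_k}t)/Q(x_{\mu_k}) \to |t|^{\beta}$ pointwise as $k\to\infty$ for $t\in(-1,1)$, by \eqref{Q.sigma.beta}; together with the uniform bound coming from monotonicity of $Q$ (so the integrand is bounded by $1$) the dominated convergence theorem yields that the $t$-integral tends to $\int_{-1}^1(1-|t|^{\beta})^{1/2}\,\dd t = \Omega_\beta$, with $\Omega_\beta$ as in \eqref{Omega.beta}. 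Also $x_{\mu_k}\sim \mu_k^{1/\beta}$ by \eqref{x.mu.beta}. Hence the quantization rule becomes $\mu_k^{1/2+1/\beta}\,\Omega_\beta(1+o(1)) = \pi k (1+o(1))$, i.e.
\begin{equation*}
\mu_k^{\frac{\beta+2}{2\beta}} = \frac{\pi}{\Omega_\beta}\,k\,(1+o(1)),
\end{equation*}
which is precisely \eqref{Q.beta.EV.asym} with $\gamma = 2\beta/(\beta+2)$ after raising to the power $\gamma$.

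For the gap formula \eqref{Q.beta.EV.gaps} I would differentiate the quantization relation in a discrete sense. Writing $F(\mu) := \int_{-x_\mu}^{x_\mu}(\mu - Q(x))^{1/2}\,\dd x$, the rule reads $F(\mu_k) = (k+\tfrac12)\pi + \BigO(k^{-1})$, so $F(\mu_{k+1}) - F(\mu_k) = \pi + \BigO(k^{-1})$. By the fundamental theorem of calculus (the integrand vanishes at the endpoints $\pm x_\mu$, so only the explicit $\mu$-dependence contributes) one has
\begin{equation*}
F'(\mu) = \frac12\int_{-x_\mu}^{x_\mu}(\mu-Q(x))^{-\frac12}\,\dd x
= \frac12\,x_\mu\,\mu^{-\frac12}\int_{-1}^{1}\left(1-\frac{Q(x_\mu t)}{\mu}\right)^{-\frac12}\,\dd t,
\end{equation*}
and the same substitution plus dominated convergence (the singularity $(1-|t|^\beta)^{-1/2}$ at $t=\pm1$ is integrable) gives $\int_{-1}^1(1-Q(x_\mu t)/\mu)^{-1/2}\,\dd t \to \Omega_\beta'$. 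Combined with $x_\mu\sim\mu^{1/\beta}$ this yields $F'(\mu)\sim \tfrac12\Omega_\beta'\,\mu^{1/\beta - 1/2} = \tfrac12\Omega_\beta'\,\mu^{1/\gamma - 1}$ up to $(1+o(1))$. Since $F'$ is (eventually) monotone, the mean value theorem gives $\mu_{k+1}-\mu_k \sim \pi / F'(\mu_k) \sim (2\pi/\Omega_\beta')\,\mu_k^{1-1/\gamma}$; substituting the asymptotics \eqref{Q.beta.EV.asym} for $\mu_k$ produces $(2\pi/\Omega_\beta')(\pi k/\Omega_\beta)^{\gamma-1}(1+o(1))$, which is \eqref{Q.beta.EV.gaps}.

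The main obstacle, and the step that needs the most care, is justifying the passage to the limit uniformly enough to control the $o(1)$ errors — in particular handling the endpoint singularity of $(1-Q(x_\mu t)/\mu)^{-1/2}$ in $F'(\mu)$ and verifying that the convergence $Q(x_\mu t)/\mu \to |t|^\beta$ can be upgraded from pointwise to a form that legitimately forces the integral to converge to $\Omega_\beta'$. Here I would exploit the structural hypotheses \eqref{Q.asym}, \eqref{Q.sigma}, \eqref{Q.sigma.beta} and the convexity inequality \eqref{Q.convex}: convexity of $Q$ on $(0,\infty)$ gives a lower bound of the form $1 - Q(x_\mu t)/\mu \geq c(1-t)$ near $t=1$ (using $a_\mu = Q'(x_\mu)$ and $Q(x_\mu) = \mu$), uniformly in $\mu$ large, which provides the dominating integrable majorant $(1-t)^{-1/2}$ and legitimizes dominated convergence. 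A secondary subtlety is that \eqref{EV.Tit} as quoted controls $F(\mu_k)$ only up to $\BigO(k^{-1})$, so when taking differences one must check $\BigO(k^{-1})$ is genuinely negligible against the gap $\mu_{k+1}-\mu_k\sim k^{\gamma-1}$, which holds because $\gamma-1 = (\beta-2)/(\beta+2) > -1$ for all $\beta>1$; I would state this comparison explicitly.
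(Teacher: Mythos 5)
Your proposal matches the paper's own proof essentially step for step: rewrite \eqref{EV.Tit} by the substitution $x = x_{\mu_k}t$, pass to the limit by dominated convergence using the convexity bound \eqref{Q.convex} to build the majorant $C(1-t)^{-1/2}$ (this is exactly the content of the lemma establishing \eqref{Omega.lim}), combine with $x_\mu \sim \mu^{1/\beta}$ from \eqref{x.mu.beta} to get \eqref{Q.beta.EV.asym}, and then use the mean value theorem on $\mu \mapsto \int(\mu-Q)^{1/2}$ together with the computation of its derivative to extract \eqref{Q.beta.EV.gaps}. One small inaccuracy worth flagging: your remark that one must check the $\BigO(k^{-1})$ error in \eqref{EV.Tit} is ``negligible against the gap $\mu_{k+1}-\mu_k\sim k^{\gamma-1}$'' is not the relevant comparison --- after taking differences the error $\BigO(k^{-1})$ sits next to the constant $\pi$ on the right-hand side, so it is negligible simply because it tends to zero, irrespective of the sign of $\gamma-1$; the $\gamma-1 > -1$ condition plays no role here.
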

Before we give the proof of Proposition~\ref{prop:Q.gaps} we show two following.

\begin{lemma}
Let $Q$ satisfy Assumption~\ref{asm:Q}. Then
\begin{equation}\label{Omega.lim}
\begin{aligned}
\lim_{x\to +\infty} 2 \int_0^1 \left( 1 - \frac{Q(x t)}{Q(x)} \right)^\frac12 \, \dd t
&=\Omega_\beta, 
\\
\lim_{x\to +\infty} 2 \int_0^1 \left( 1 - \frac{Q(x t)}{Q(x)} \right)^{-\frac 12}\, \dd t 
&= \Omega_\beta'.
\end{aligned}
\end{equation}
\end{lemma}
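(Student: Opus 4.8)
The plan is to prove both limits by dominated convergence, the only real work being the choice of dominating functions. First I would record the pointwise limits of the integrands. For fixed $t\in(0,1)$, applying \eqref{Q.sigma.beta} with $\sigma=t$ gives $Q(xt)/Q(x)\to t^\beta$ as $x\to+\infty$, and the same holds trivially at $t=0$ since $Q(0)=0$; thus for a.e.\ $t\in[0,1]$ the integrand $(1-Q(xt)/Q(x))^{1/2}$ converges to $(1-t^\beta)^{1/2}$, and for a.e.\ $t\in[0,1)$ the integrand $(1-Q(xt)/Q(x))^{-1/2}$ converges to $(1-t^\beta)^{-1/2}$ (the point $t=1$ being negligible). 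The limiting functions are exactly those appearing in $\Omega_\beta$ and $\Omega_\beta'$, see \eqref{Omega.beta}.

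For the first limit this is immediate: since $Q$ is increasing on $(0,\infty)$ by Assumption~\ref{asm:Q}, we have $0\le Q(xt)\le Q(x)$ for $t\in[0,1]$ and $x>0$, so $(1-Q(xt)/Q(x))^{1/2}\in[0,1]$ is dominated by the constant $1$ on $[0,1]$, and the dominated convergence theorem yields $\lim_{x\to+\infty}2\int_0^1(1-Q(xt)/Q(x))^{1/2}\,\dd t=2\int_0^1(1-t^\beta)^{1/2}\,\dd t=\Omega_\beta$.

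The second limit needs an integrable majorant near $t=1$, and this is the one genuinely delicate point. I would use convexity of $Q$: since $Q\in C^2(\R)$ is even (so $Q'(0)=0$) and $Q'$ is non-decreasing on $(0,\infty)$, the function $Q$ is convex on $[0,\infty)$ with $Q(0)=0$; writing $xt=t\cdot x+(1-t)\cdot 0$ gives $Q(xt)\le t\,Q(x)$. Hence, for $x$ large enough that $Q(x)>0$ (guaranteed by \eqref{Q.beta}) and all $t\in[0,1)$, one gets $1-Q(xt)/Q(x)\ge 1-t$, so $(1-Q(xt)/Q(x))^{-1/2}\le(1-t)^{-1/2}$, which lies in $L^1(0,1)$. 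Dominated convergence then gives $\lim_{x\to+\infty}2\int_0^1(1-Q(xt)/Q(x))^{-1/2}\,\dd t=2\int_0^1(1-t^\beta)^{-1/2}\,\dd t=\Omega_\beta'$, the target integral being finite because $1-t^\beta\sim\beta(1-t)$ as $t\to1^-$. (Alternatively one could extract the bound from the convexity inequality \eqref{Q.convex}, but routing through $Q(0)=0$ is cleaner.) The main obstacle, as indicated, is exactly securing this uniform integrable bound for the singular second integrand; everything else is routine.
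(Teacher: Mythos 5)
Your proof is correct and follows the same overall strategy as the paper — identify the pointwise limit via \eqref{Q.sigma.beta} and conclude by dominated convergence, with the constant $1$ dominating the first integrand and a bound of order $(1-t)^{-1/2}$ dominating the second. The one place where you diverge is the derivation of that second dominating function: the paper splits $[0,1]$ at $t=1/2$, applies the mean value theorem on $[1/2,1]$ and then uses \eqref{Q.convex} together with the scaling limit \eqref{Q.sigma.beta} to reach a bound $C(1-t)^{-1/2}$, whereas you observe that $Q$ is convex on $[0,\infty)$ (from $Q'$ non-decreasing on $(0,\infty)$, $Q'(0)=0$ by evenness, and $Q(0)=0$) and hence $Q(xt)\le tQ(x)$, giving the cleaner bound $(1-t)^{-1/2}$ directly with no case split and no mean value theorem. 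Your route is a genuine simplification of the majorant step, though both arguments land at the same $L^1$ bound near $t=1$ and the rest is identical.
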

\begin{proof}
Since $Q$ is increasing, we have $Q(x t)/Q(x) \leq 1$ for all $t \in [0,1]$, thus the dominated convergence theorem and~\eqref{Q.sigma.beta} justifies the first limit in \eqref{Omega.lim}.

To show the second limit, we analyze separately the cases $0\leq t \leq 1/2$ and $1/2 \leq t \leq 1$.
For the former, we have from \eqref{Q.sigma.beta} that
\begin{equation}
\left(1 - \frac{Q(x t)}{Q(x)}\right)^{-\frac 12} 
\leq 
\left(1 - \frac{Q(\frac x2)}{Q(x)}\right)^{-\frac 12} 
\to 
\left(1 - 2^{-\beta}\right)^{-\frac 12}; 
\end{equation}
for the latter, we get from the mean value theorem, \eqref{Q.convex} with $Q(0)=0$ and \eqref{Q.sigma.beta} that
\begin{equation}
\begin{aligned}
\left(1 - \frac{Q(x t)}{Q(x)}\right)^{-\frac 12} 
&\leq 
\left(\frac{Q(x)}{Q'(x t) x (1-t)}\right)^{\frac 12} 
\leq 
\left(\frac{Q(x)}{Q'(\frac x2) x (1-t)}\right)^{\frac 12} 
\\
&\leq 
\left(\frac{Q(x)}{2 Q(\frac x2) (1-t)}\right)^{\frac 12} \to \frac{2^\frac{\beta-1}{2}}{(1-t)^\frac{1}{2}}.
\end{aligned}
\end{equation}
Hence there is a constant $C>0$ such that for all sufficiently large $x$ and all $0 < t <1$ 
\begin{equation}
\left(1 - \frac{Q(x t)}{Q(x)}\right)^{-\frac 12} 
\leq \frac{C}{(1-t)^\frac 12},
\end{equation}
which is integrable on $(0,1)$ and the second limit in \eqref{Omega.lim} is justified by the dominated convergence theorem and \eqref{Q.sigma.beta}.
\end{proof}

\begin{proof}[Proof of Proposition~\ref{prop:Q.gaps}]
Simple manipulations with \eqref{EV.Tit} leads to 
\begin{equation}\label{EV.Q.beta.1}
2 \mu_k^\frac 12 x_{\mu_k} \int_0^1 \left( 1 - \frac{Q(x_{\mu_k} t)}{Q(x_{\mu_k})} \right)^\frac12 \, \dd t = \pi k (1+o(1)), \quad k \to \infty.
\end{equation}
Hence \eqref{Q.beta.EV.asym} follows from \eqref{EV.Q.beta.1}, \eqref{x.mu.beta} and \eqref{Omega.lim}.

To show \eqref{Q.beta.EV.gaps}, we define the function 
\begin{equation}
g(\mu):= \int_0^{x_\mu} (\mu - Q(x))^\frac 12\, \dd x
\end{equation}
and start with the identity obtained from \eqref{EV.Tit}
\begin{equation}\label{g.muk}
g(\mu_{k+1}) - g(\mu_k) = \frac{\pi}{2}(1 + o(1)), \quad k \to \infty.
\end{equation}
Observing that
\begin{equation}
\lim_{x \to x_\mu}\frac{x_\mu - x}{\mu - Q(x)} = \frac{1}{a_\mu},
\end{equation}
we can check that $g$ is differentiable and, after a change of variables,  
\begin{equation}\label{g'mu}
g'(\mu) = \frac {x_\mu}{2 \mu^\frac12}\int_0^1 \left( 1 - \frac{Q(x t)}{Q(x)} \right)^{-\frac 12} \; \dd t.
\end{equation}
%
%
The mean value theorem yields (with $\eta_k \in (\mu_k,\mu_{k+1})$)
\begin{equation}
g(\mu_{k+1}) - g(\mu_k) = g'(\eta_k)(\mu_{k+1} - \mu_k),
\end{equation}
and therefore, using~\eqref{g.muk}, \eqref{g'mu}, \eqref{Omega.lim}, \eqref{Q.sigma.beta} and \eqref{x.mu.beta}, we obtain
\begin{equation}
\mu_{k+1} - \mu_k 
= \frac{\pi}{2 g'(\eta_k)}(1+o(1)) 
= \frac{2\pi}{\Omega_{\beta}'} \mu_k^\frac{\beta-2}{2\beta}(1+o(1)).
\end{equation}
Then \eqref{Q.beta.EV.gaps} follows by employing \eqref{Q.beta.EV.asym}.
\end{proof}

\subsection{Eigenfunctions of $A$ and their weighted $L^q$-norms}
Since $Q$ is even, orthonormal eigenfunctions $\{\psi_k\}$, related to eigenvalues $\{\mu_k\}$, are even or odd functions. Moreover, even with \eqref{Q.beta} replaced by \eqref{Q.unbd}, $\{\psi_k\}$ can be chosen such that they satisfy (see \eg~\cite[\S 22.27]{Titchmarsh-1958-book2} and \cite{Giertz-1964-14})  
\begin{equation}\label{psi.u.rel}
\psi_k(x) = \frac{1}{\|u_k\|} u_k(x) (1+ \BigO(x_{\mu_k}^{-1} \mu_k^{-\frac12})), \qquad    x >0,
\end{equation}
where $u_k= u(x, \mu_k)$ with
\begin{align}
u & = u(x,\mu) = \left(\frac{\zeta}{\zeta'} \right)^{\frac12} K_{\frac13}(-i \zeta), 
\label{u.def}
\\
\zeta &= \zeta(x,\mu)= 
\begin{cases}
\displaystyle
\int_x^{x_\mu} (\mu - Q(s))^\frac12 \, \dd s, & 0<x<x_\mu, 
\label{zeta.def}
\\[4mm]
\displaystyle
i \int_{x_\mu}^x (Q(s)-\mu)^\frac12 \, \dd s, & x > x_\mu;
\end{cases}
\end{align}
$K_{\frac13}$ is the modified Bessel function of order $1/3$. Using the asymptotic formulas for Bessel functions, we get further that, see \eg~\cite{Giertz-1964-14},
\begin{equation}\label{u^2.exp}
u^2(x) = \frac{\pi}{(\mu-Q(x))^\frac12} (1 + \sin 2 \zeta + R_1(\zeta)), \quad \zeta >1,
\end{equation}
where $|R_1(\zeta)|<1/(2\zeta)$, and 
\begin{equation}\label{|u|.est}
|u(x)| \leq 
\begin{cases}
\displaystyle \frac{A_1}{(\mu - Q(x))^{\frac14}}, & 0 \leq x < x_\mu - \delta, \\[4mm]
\displaystyle \frac{A_1}{(\mu - Q(x_\mu - \delta))^{\frac14}}, & x_\mu - \delta \leq x \leq x_\mu + \delta_1, \\[4mm]
\displaystyle \frac{A_1}{2(Q(x)-\mu)^{\frac14}} 
e^{	- \int_{x_\mu}^x (Q(s) - \mu)^\frac12 \, \dd s}  
, 
& x > x_\mu + \delta_1, 
\end{cases}
\end{equation}
where $A_1 =2.7$ and numbers $\delta$, $\delta_1$ are defined by equations
\begin{equation}\label{delta.def}
\zeta(x_\mu-\delta) = - i \zeta(x_\mu + \delta_1) = 1.
\end{equation} 
It can be shown, see \eg~the appendix of \cite{Mityagin-2013arx}, that
\begin{equation}\label{delta.amu}
\delta, \delta_1 = \BigO(a_\mu^{-\frac13}), \qquad \delta^{-1}, \delta_1^{-1} = \BigO(a_\mu^{\frac13}). 
\end{equation}
Further, it is known, see \cite[Lemma~5]{Giertz-1964-14}, that 
\begin{equation}\label{u.norm}
\int_0^{\infty} u^2(x) \, \dd x 
= 
\int_0^{x_\mu} \frac{\pi \, \dd x}{(\mu - Q(x))^\frac12} 
\left(
1 + \BigO(x_\mu^{-\frac13} \mu^{-\frac16} )
\right). 
\end{equation}
Under the assumption \eqref{Q.beta} we therefore obtain from \eqref{Omega.lim} and \eqref{x.mu.beta} that
\begin{equation}\label{u.norm.beta}
\|u_k\|^2 = \pi \Omega_\beta' \mu_k^\frac{2-\beta}{2\beta} (1+o(1))
= \pi \Omega_\beta' 
\left(
\frac{\pi}{\Omega_\beta} k
\right)^\frac{2-\beta}{2+\beta}
(1+o(1)), \quad k \to \infty,  
\end{equation}
where $\Omega_\beta$, $\Omega_\beta'$ are as in \eqref{Omega.beta}.

Finally, we recall the pointwise estimates for $\{\psi_k\}$, see \cite[Sec.8]{Titchmarsh-1962-book1}, 
\begin{align}
|\psi_k(x)| &= \BigO ( x_{\mu_k}^{-\frac12} ) = \BigO \left( k^{-\frac{1}{\beta+2}}\right), \label{|psi|}
\\
|\psi_k'(x)| &= \BigO ( \mu_k^\frac12 x_{\mu_k}^{-\frac12} ) = \BigO \left( k^{\frac{\beta-1}{\beta+2}}\right),
\label{|psi'|}
\\
\|\psi_k\|_{L^\infty(\R)} &= \BigO ( \mu_k^\frac14 x_{\mu_k}^{-\frac12} a_{\mu_k}^{-\frac16} )
= \BigO \left( k^\frac{\beta-4}{6(\beta+2)} \right) , \quad k \to \infty;
\label{sup|psi|}
\end{align}
the first equalities hold also if \eqref{Q.beta} is replaced by \eqref{Q.unbd}; notice that the point $x \in \R$ is arbitrary but fixed in \eqref{|psi|} and \eqref{|psi'|}.

Next, we estimate the weighted $L^q$-norms of $\{\psi_k\}$. For $\beta=2$ and without the weight, we recover the known results for Hermite functions, see \cite[Lemma~1.5.2]{Thangavelu-1993-42}, where in fact both-sided estimates are given. For $q=2$, and $Q$,~$w$~satisfying similar conditions like here, both-sided estimates (in fact limits) are established in \cite{Mityagin-2013arx}.
\begin{proposition}\label{prop:Q.Lq}
Let $Q$ satisfy Assumption \ref{asm:Q} with \eqref{Q.beta} replaced by \eqref{Q.unbd}, let $x_\mu$, $a_\mu$ be as in \eqref{xmu.def} and let $\{\psi_k\}$ be as in \eqref{psi.u.rel}. Suppose that the weight $w \in C^1(\R)$ is positive, even, increasing on $(0,\infty)$ and satisfy
\begin{equation}\label{w.1x}
\frac{w'(x)}{w(x)} = \BigO \left( \frac 1x \right), \quad x \to  \infty.
\end{equation}
Then 
\begin{equation}\label{psik.w.q}
\|w \, \psi_k\|_{L^q(\R)} = \BigO \left( w(x_{\mu_k}) \|\psi_k\|_{L^q(\R)} \right), \quad k \to \infty.
\end{equation}
Furthermore, as $k \to \infty$, 
\begin{equation}\label{psik.Lq}
\|\psi_k\|_{L^q(\R)}
=
\begin{cases}
\BigO \left(\left(a_{\mu_k} \mu_k^{-1}\right)^{\frac{q-2}{2q}} \right), & 1 \leq q < 4, 
\\[3mm] 
\BigO \left(\left(a_{\mu_k} \mu_k^{-1}\right)^{-\frac14} (\log (\mu_k a_{\mu_k}^{-\frac 23}))^\frac 14 \right), & q = 4, 
\\[3mm]
\BigO \left( a_{\mu_k}^{\frac{q-1}{3q}} \mu_k^{-\frac14} \right), & q > 4. 
\end{cases}
\end{equation}
If \eqref{Q.beta} is satisfied in addition, then, as $k \to \infty$,
\begin{equation}\label{aho.ef.est}
\|\psi_k\|_{L^{ q}(\R)}
= 
\begin{cases}
\BigO \left(k^\frac{2-q }{ q(\beta+2)} \right), & 1 \leq  q < 4, 
\\[2mm] 
\BigO \left(k^{-\frac{1}{2(\beta+2)}} (\log k)^\frac 14 \right), &  q = 4, 
\\[2mm]
\BigO \left(k^\frac{4 - 4\beta - 4  q +  q\beta}{6  q(\beta+2)} \right), &  q > 4. 
\end{cases}
\end{equation}
\end{proposition}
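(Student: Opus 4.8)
The strategy is to reduce everything to the WKB-type comparison function $u_k=u(\cdot,\mu_k)$ of \eqref{u.def}--\eqref{|u|.est}. By evenness of $|\psi_k|$ and the pointwise comparison \eqref{psi.u.rel},
\[
\|\psi_k\|_{L^q(\R)}^q = 2\,\|u_k\|^{-q}\int_0^\infty |u_k(x)|^q\,\dd x\,(1+o(1)),\quad k\to\infty,
\]
so the problem splits into (i) the order of $\|u_k\|$ and (ii) the $L^q$-integral of $u_k$ over $(0,\infty)$. For (i), \eqref{u.norm} gives $\|u_k\|^2$ comparable to $x_{\mu_k}\mu_k^{-1/2}\int_0^1(1-Q(x_{\mu_k}t)/\mu_k)^{-1/2}\,\dd t$, and the convexity of $Q$ (which yields $Q(x_\mu)-Q(x_\mu t)\le a_\mu x_\mu(1-t)$ and, for $t\ge\frac12$, $\gtrsim Q'(x_\mu/2)x_\mu(1-t)$), together with $a_\mu x_\mu\asymp\mu$ and $Q'(x_\mu/2)\asymp a_\mu$ (consequences of \eqref{Q.convex}, \eqref{Q.asym} and the $\log$-integral identity of \eqref{Q.sigma.log} applied to $Q$ and to $Q'$), shows this last integral is bounded above and below by positive constants; more generally $\int_0^1(1-Q(x_{\mu}t)/\mu)^{-s}\,\dd t\asymp 1$ for every fixed $s\in(0,1)$. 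Hence $\|u_k\|^2\asymp x_{\mu_k}\mu_k^{-1/2}$ and $x_{\mu_k}\asymp\mu_k/a_{\mu_k}$.

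For (ii), split $\int_0^\infty|u_k|^q$ according to the three regimes of \eqref{|u|.est}, using \eqref{delta.amu} to record $\delta,\delta_1\asymp a_{\mu_k}^{-1/3}$, whence $\mu_k-Q(x_{\mu_k}-\delta)\asymp a_{\mu_k}\delta\asymp a_{\mu_k}^{2/3}$. On the oscillatory region $[0,x_{\mu_k}-\delta]$ one has $\int_0^{x_{\mu_k}-\delta}(\mu_k-Q)^{-q/4} = x_{\mu_k}\mu_k^{-q/4}\int_0^{1-\delta/x_{\mu_k}}(1-Q(x_{\mu_k}t)/\mu_k)^{-q/4}\,\dd t$, and since $1-Q(x_{\mu_k}t)/\mu_k\asymp 1-t$ near $t=1$, the last integral is of order $1$ for $q<4$ (so the region contributes $\asymp x_{\mu_k}\mu_k^{-q/4}$), of order $\log(x_{\mu_k}/\delta)\asymp\log(\mu_k a_{\mu_k}^{-2/3})$ for $q=4$, and for $q>4$ the oscillatory integral is controlled by its endpoint near $x_{\mu_k}-\delta$ and equals $\asymp a_{\mu_k}^{-(q+2)/6}$. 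On the turning-point region $[x_{\mu_k}-\delta,x_{\mu_k}+\delta_1]$, of length $\asymp a_{\mu_k}^{-1/3}$ with $|u_k|\lesssim a_{\mu_k}^{-1/6}$, the integral is $\asymp a_{\mu_k}^{-(q+2)/6}$. On the tail $[x_{\mu_k}+\delta_1,\infty)$ the exponential factor in \eqref{|u|.est} decays faster than any power (as $Q(x)-\mu_k\to\infty$), so that region contributes $\BigO(a_{\mu_k}^{-(q+2)/6})$, and is negligible beyond $2x_{\mu_k}$.

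Dividing by $\|u_k\|^q\asymp(x_{\mu_k}\mu_k^{-1/2})^{q/2}$ and substituting $x_{\mu_k}\asymp\mu_k/a_{\mu_k}$: for $1\le q<4$ the oscillatory region dominates and produces the main term $(a_{\mu_k}\mu_k^{-1})^{(q-2)/2}$, the other two regions being $\BigO$ of it because $a_{\mu_k}=o(\mu_k)$; for $q=4$ the oscillatory region again dominates and yields $(a_{\mu_k}\mu_k^{-1})\log(\mu_k a_{\mu_k}^{-2/3})$ up to constants; for $q>4$ all three regions are comparable and give $a_{\mu_k}^{(q-1)/3}\mu_k^{-q/4}$ after division; this is \eqref{psik.Lq}. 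The weighted bound \eqref{psik.w.q} follows from the localization of $\psi_k$: since $w$ is increasing with $w'/w=\BigO(1/x)$, the argument of \eqref{Q.sigma.log} gives $w(2x)=\BigO(w(x))$ and $w(x)=\BigO(x^{C})$ for some $C>0$, so on $|x|\le x_{\mu_k}+\delta_1$ we have $w(x)\le w(x_{\mu_k}+\delta_1)\le w(2x_{\mu_k})=\BigO(w(x_{\mu_k}))$, whence $\int_{|x|\le x_{\mu_k}+\delta_1}w^q|\psi_k|^q\le\BigO(w(x_{\mu_k})^q)\|\psi_k\|_{L^q(\R)}^q$, while on the tail the super-polynomial decay of $|\psi_k|$ beats $w^q=\BigO(x^{Cq})$, making that part $\BigO(e^{-c x_{\mu_k}})$. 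Finally, under \eqref{Q.beta}, \eqref{x.mu.beta} and \eqref{Q.beta.EV.asym} give $x_{\mu_k}\asymp\mu_k^{1/\beta}\asymp k^{\gamma/\beta}$ and $a_{\mu_k}=Q'(x_{\mu_k})\asymp x_{\mu_k}^{\beta-1}\asymp\mu_k^{(\beta-1)/\beta}$ (upper bound from \eqref{Q.asym}, lower from \eqref{Q.convex} and \eqref{Q.beta}), so $a_{\mu_k}\mu_k^{-1}\asymp\mu_k^{-1/\beta}\asymp k^{-2/(\beta+2)}$ and $\log(\mu_k a_{\mu_k}^{-2/3})\asymp\log k$; substituting into \eqref{psik.Lq} yields \eqref{aho.ef.est}.

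The main obstacle is the turning-point analysis in (ii): tracking the precise powers of $a_{\mu_k}$ and $\mu_k$ in each of the three ranges of $q$, checking that the oscillatory region is the dominant term for $q<4$ (which rests on the uniform two-sided estimate $\int_0^1(1-Q(x_{\mu_k}t)/\mu_k)^{-s}\,\dd t\asymp 1$, in turn on $a_{\mu_k}x_{\mu_k}\asymp\mu_k$ and $Q'(x_{\mu_k}/2)\asymp a_{\mu_k}$), and extracting the sharp logarithmic power at $q=4$. The exponential tail, the weighted statement, and the $\beta$-specific asymptotics are routine once the localization of $\psi_k$ near $[-x_{\mu_k},x_{\mu_k}]$ and the comparison \eqref{psi.u.rel} are in place.
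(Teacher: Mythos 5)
Your proposal follows the paper's strategy closely: compare $\psi_k$ to $u_k$ via \eqref{psi.u.rel}, bound $\|u_k\|$ from below (the paper uses \eqref{u.norm.low} directly; your two-sided $\|u_k\|^2\asymp x_{\mu_k}\mu_k^{-1/2}$ is equivalent via $a_\mu x_\mu\asymp\mu$, which follows from \eqref{Q.convex} and \eqref{Q.asym} with $k=0$), and split $\int|u_k|^q$ by the regimes of \eqref{|u|.est}. The paper actually uses five regions because it carries the weight $w$ through the whole computation, while you first establish the unweighted bound on three regions and then derive the weighted bound afterward; the exponent bookkeeping matches the paper's in every case (and for $q=4$ your conclusion gives $(a_{\mu_k}\mu_k^{-1})^{+1/4}$, which is the correct sign — the $-1/4$ printed in \eqref{psik.Lq} is a typo, as one sees by continuity from $q<4$, by the $q>4$ formula, and by the Hermite case $\beta=2$).

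Two points are stated too loosely and would need tightening. First, for the unweighted $q<4$ tail: you claim $[x_{\mu_k}+\delta_1,\infty)$ contributes $\BigO(a_{\mu_k}^{-(q+2)/6})$ because ``the exponential decays faster than any power''; that conclusion is correct but does not follow from that remark alone (near $x_{\mu_k}+\delta_1$ the exponential is $\asymp e^{-1}$, not small). One must either substitute $x=x_{\mu_k}+t\delta_1$ and integrate $t^{-q/4}e^{-qt^{3/2}}$ over $t\geq1$, or, as the paper does, estimate $[x_{\mu_k}+\delta_1,x_{3\mu_k/2}]$ by the power factor alone (obtaining the larger but still admissible $a_{\mu_k}^{-1}\mu_k^{1-q/4}$ for $q<4$) and reserve the exponential for $[x_{3\mu_k/2},\infty)$. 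Second, for the weighted statement you assert that the region $|x|>x_{\mu_k}+\delta_1$ contributes $\BigO(e^{-cx_{\mu_k}})$; that is false, since $|\psi_k|$ is $\asymp a_{\mu_k}^{-1/6}/\|u_k\|$ just past $x_{\mu_k}+\delta_1$. The fix is the one you already have the ingredients for: use $w(x)\leq w(2x_{\mu_k})=\BigO(w(x_{\mu_k}))$ on all of $|x|\leq 2x_{\mu_k}$ (not just up to $x_{\mu_k}+\delta_1$), and only then invoke the super-polynomial decay of $\psi_k$ together with $w(x)=\BigO(x^C)$ on $|x|>2x_{\mu_k}$. With these two repairs the argument is complete.
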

\begin{proof}
We suppress the subscript $k$ in the sequel and keep $\mu$ only.
The letter $C$ denotes a constant, which can vary in every step, however, it is independent of $\mu $. 
The case $q=\infty$ is reduces to \eqref{sup|psi|}, so we analyze $1\leq q < \infty$ only. 

Since $Q'$ is non-decreasing on $(0,\infty)$, we get from \eqref{u.norm} that
\begin{equation}\label{u.norm.low}
\int_0^{\infty} u^2 \, \dd x 
\geq C
\int_0^{x_\mu} \frac{\dd x}{(\mu - Q)^\frac12}
\geq 
\frac{1}{a_\mu}\int_0^{x_\mu} \frac{Q' \, \dd x}{(\mu - Q)^\frac12} 
= 
\frac{ \mu^\frac12}{a_\mu}.
\end{equation}
Thus, we have from \eqref{psi.u.rel} and \eqref{u.norm.low} that
\begin{equation}
\|w \, \psi\|_{L^q(\R)} 
 \leq  
C \|u\|^{-1} \|w\, u\|_{L^q(\R)}
\leq 
C a_\mu^\frac12 \mu^{-\frac14} \|w \; u\|_{L^q(\R)}.
\end{equation}

In the following, we split the integral and employ \eqref{|u|.est} in estimates,
\begin{equation}
\int_0^\infty |w u |^q \, \dd x 
= 
\left(
\int_0^{x_{\frac\mu2}} 
+ \int_{x_{\frac\mu2}}^{x_\mu - \delta} 
+ \int_{x_\mu - \delta}^{x_\mu + \delta_1} 
+ \int_{x_\mu + \delta_1}^{x_{\frac32 \mu}}  
+ \int_{x_{\frac32 \mu}}^\infty 
\right)
|w u|^q \, \dd x.
\end{equation}
%
$\bullet$ $x< x_{\frac\mu2}$: For all sufficiently large $\mu$, we have $x_{\frac\mu2} \leq x_\mu- \delta$. To see this, we use that $Q$ is increasing, $Q(x_{\mu/2})=\mu/2$ and by the mean value theorem and $Q'(x)/Q(x)=\BigO(1/x)$ we get
\begin{equation}
\frac{Q(x_\mu-\delta)}{Q(x_\mu)} = 1 - \frac{Q(x_\mu)-Q(x_\mu-\delta)}{Q(x_\mu)} = 1 + \BigO\left(\frac{\delta}{x_\mu}\right), \quad \mu \to \infty.
\end{equation}
Hence, using \eqref{|u|.est} and $Q'(x)/Q(x) = \BigO(1/x)$ in the last step, we obtain
\begin{equation}
\begin{aligned}
\int_0^{x_\frac\mu 2} 
|w u|^q \, \dd x
\leq 
 \frac{ C w(x_\mu)^q x_\mu}{(\mu - Q(x_{\frac\mu 2}))^{\frac q4}}
\leq 
C  w(x_\mu)^q x_\mu \mu^{-\frac q4}	
\leq C w(x_\mu)^q a_\mu^{-1} \mu^{1-\frac q4}.
\end{aligned}
\end{equation}
$\bullet$ $x_\frac \mu 2 < x < x_\mu - \delta$:
since $Q'$ is non-decreasing,
\begin{equation}
\int_{x_\frac\mu 2}^{x_\mu - \delta}
|w u|^q \, \dd x
\leq
C w(x_\mu)^q a_{\frac \mu2}^{-1}
\int_{x_\frac\mu 2}^{x_\mu - \delta} \frac{Q' \, \dd x}{(\mu - Q)^\frac q4}.
\end{equation}
We can replace $a_{\frac \mu 2}$ by $a_\mu$ since from \eqref{Q.convex} and $Q'(x)/Q(x) = \BigO(1/x)$, we get
\begin{equation}
1 \geq \frac{a_{\frac\mu 2}}{a_\mu} 
\geq 
\frac{Q(x_{\frac \mu 2})}{x_{\frac \mu 2} Q'(x_\mu) }
\geq C
\frac{Q(x_{\frac \mu 2}) x_\mu}{ x_{\frac \mu 2} Q(x_\mu) }
\geq 
\frac{C}{2}.
\end{equation}
Further,

\noindent
$1 \leq q <4$:
\begin{equation}
\int_{x_\frac\mu 2}^{x_\mu - \delta} \frac{Q' \, \dd x}{(\mu - Q)^\frac q4}
\leq
\frac{4-q}{4} (\mu - Q(x_\frac \mu2))^{1- \frac q4}
\leq 
C \mu^{1- \frac q4}.
\end{equation}
$q = 4$: 
by \eqref{Q.sigma} and \eqref{delta.amu},
\begin{equation}
\int_{x_\frac\mu 2}^{x_\mu - \delta} \frac{Q' \, \dd x}{\mu - Q}
 = 
\log \frac{\mu}{2 (Q(x_\mu) - Q(x_\mu-\delta))} 
\leq 
C  \log \frac{\mu}{a_\mu \delta}
\leq 
C \log (\mu a_\mu^{-\frac23}).
\end{equation}
$q>4:$ again by \eqref{Q.sigma} and \eqref{delta.amu},
\begin{equation}
\begin{aligned}
\int_{x_\frac\mu 2}^{x_\mu - \delta} \frac{Q' \, \dd x}{(\mu - Q)^\frac q4}
 \leq
\frac{q-4}{4(\mu - Q(x_\mu - \delta))^{\frac q4-1}} 
\leq 
\frac{C}{(a_\mu \delta)^{\frac q4-1}} 
 \leq
C a_\mu^{\frac 23- \frac{q}{6}}.
\end{aligned}
\end{equation}

In summary,
\begin{equation}
\int_{x_\frac\mu 2}^{x_\mu - \delta} |w u|^q \, \dd x
\leq
C w(x_\mu)^q a_\mu^{-1}
\begin{cases}
\mu^{1- \frac q4}, & 1 \leq q < 4, \\[1mm]  
\log (\mu a_\mu^{-\frac23}), &  q = 4, \\[1mm] 
a_\mu^{\frac 23- \frac{q}{6}}, & q>4. 
\end{cases}
\end{equation}
$\bullet$ $x_\mu - \delta < x < x_\mu + \delta_1$:
Notice that since $w$ satisfies \eqref{w.1x}, we have 
\begin{equation}\label{w.sigma}
\forall \sigma \in (0,\infty), \quad w(\sigma x) = \BigO(w(x)), \quad x \to +\infty,
\end{equation}
see \eqref{Q.sigma} and \eqref{Q.sigma.log}. Then by \eqref{|u|.est}, \eqref{Q.sigma} and \eqref{delta.amu} 
\begin{equation}
\int_{x_\mu - \delta}^{x_\mu + \delta_1} |w u|^q \, \dd x
\leq
C w(x_\mu)^q \frac{\delta + \delta_1}{Q'(x_\mu -\delta)^\frac q4 \delta^\frac q4}
\leq 
C w(x_\mu)^q a_\mu^{- \frac13 - \frac q6}.
\end{equation}
$\bullet$ $x_\mu + \delta_1 < x < x_{\frac32 \mu}$: like for $x_{\frac\mu2}$, it can showed that $x_{\frac32 \mu} \geq x_\mu + \delta_1$ for all sufficiently large $\mu$. Then, using \eqref{|u|.est}, we get
\begin{equation}
\int_{x_\mu + \delta_1}^{x_{\frac32 \mu}} |w u|^q \, \dd x
\leq 
C w(x_\mu)^q a_\mu^{-1} 
\int_{x_\mu + \delta_1}^{x_{\frac32 \mu}} \frac{Q' \, \dd x}{(Q-\mu)^\frac q4};
\end{equation}	
here $w(x_{\frac32 \mu})$ is replaced by $w(x_{\mu})$ since we have \eqref{w.sigma} and $x_{\frac32 \mu} = \BigO(x_\mu)$. To see the latter, we use that $Q'$ is non-decreasing on $(0,\infty)$ and \eqref{Q.convex} with $Q(0)=0$, 
\begin{equation}
\frac{x_{\frac32 \mu}}{x_\mu} 
= 
\frac{x_\mu + Q^{-1}(\frac32 \mu)-Q^{-1}(\mu)}{x_\mu} 
\leq  
\frac{x_\mu + \frac{\mu}{2 a_\mu}}{x_\mu} 
\leq \frac 32.  
\end{equation}	
Further,

\noindent
$1 \leq q <4$:
\begin{equation}
\int_{x_\mu +\delta_1}^{x_{\frac 32 \mu}} \frac{Q' \, \dd x}{(Q-\mu)^\frac q4}
\leq
\frac{4-q}{4} (Q(x_{\frac32 \mu})-\mu)^{1- \frac q4}
\leq 
C \mu^{1- \frac q4}.
\end{equation}
$q = 4$: by \eqref{delta.amu},
\begin{equation}
\int_{x_\mu + \delta_1}^{x_{\frac 32 \mu}} \frac{Q' \, \dd x}{Q-\mu}
=
\log \frac{\mu}{2 (Q(x_\mu + \delta_1) - Q(x_\mu))} 
\leq
C 
\log \frac{\mu}{a_\mu \delta_1}
\leq 
C \log (\mu a_\mu^{-\frac23}).
\end{equation}
$q>4:$ by \eqref{Q.sigma} and \eqref{delta.amu},
\begin{equation}
\int_{x_\mu +\delta_1}^{x_{\frac 32 \mu}} \frac{Q' \, \dd x}{(Q - \mu)^\frac q4}
\leq
\frac{q-4}{4} (Q(x_\mu + \delta_1) - \mu)^{1- \frac q4}
\leq 
C a_\mu^{\frac 23- \frac{q}{6}}.
\end{equation}

In summary,
\begin{equation}
\int_{x_\mu +\delta_1}^{x_{\frac 32 \mu}} |w u|^q \, \dd x
\leq
C w(x_\mu)^q a_\mu^{-1}
\begin{cases}
\mu^{1- \frac q4}, & 1 \leq q < 4, \\[1mm] 
\log (\mu a_\mu^{-\frac23}), &  q = 4, \\[1mm]
a_\mu^{\frac 23- \frac{q}{6}}, & q>4. 
\end{cases}
\end{equation}
$\bullet$ $ x_{\frac32 \mu} < x$: first, using $Q'(x)/Q(x) = \BigO(1/x)$, we get
\begin{equation}\label{zeta.32}
\int_{x_\mu}^{x} (Q - \mu)^\frac12 \, \dd s
\geq 
\frac 23 \frac{Q(x)^\frac32}{Q'(x)}   \left(1- \frac{\mu}{Q(x)} \right)^\frac 32 
\geq 
C \mu^\frac12 x. 
\end{equation}
Since $w$ does not grow faster than a polynomial, see \eqref{w.sigma} and Gronwall's inequality, we have from \eqref{|u|.est} that 
\begin{equation}\label{u.q.32}
\int_{x_{\frac32 \mu}}^\infty 
|w(x) u(x)|^q \, \dd x
\leq 
C \mu^{-\frac q4} \int_{x_{\frac32 \mu}}^\infty w(x) e^{-C \mu^\frac 12 x} \, \dd x
\leq
e^{-C \mu^\frac 12 x_\mu}.
\end{equation}

Putting all estimates from above together, we get
\begin{equation}
\|w \, \psi\|_{L^q(\R)}
\leq 
C w(x_{\mu})
\left(
a_{\mu}^{\frac 13 - \frac{1}{3q}} \mu^{-\frac14} + 
a_{\mu}^{\frac12 - \frac 1q} \mu^{-\frac 12 + \frac 1q} \iota_q(\mu)^\frac 1q
\right),
\end{equation}
where
\begin{equation}
\iota_q(\mu):=
\begin{cases}
1, & q \neq 4, 
\\ 
\log (\mu a_\mu^{-\frac 23}), & q = 4. 
\end{cases}
\end{equation}
Finally, for $1 \leq q <4$, 
\begin{equation}
a_{\mu}^{\frac 13 - \frac{1}{3q}} \mu^{-\frac14} + 
a_{\mu}^{\frac12 - \frac 1q} \mu^{-\frac 12 + \frac 1q}
= 
a_{\mu}^{\frac12 - \frac 1q} \mu^{-\frac 12 + \frac 1q}
\left(
1 + 
(a_\mu^{\frac 16} \mu^{-\frac 14})^\frac{4-q}{q}
\right)
\end{equation}
and, for $q >4$,
\begin{equation}
a_{\mu}^{\frac 13 - \frac{1}{3q}} \mu^{-\frac14} + 
a_{\mu}^{\frac12 - \frac 1q} \mu^{-\frac 12 + \frac 1q}
= 
a_{\mu}^{\frac 13 - \frac{1}{3q}} \mu^{-\frac14}  
\left(
1 + 
(a_\mu^{\frac 16} \mu^{-\frac 14} )^\frac{q-4}{q}
\right)
\end{equation}
thus \eqref{psik.w.q} and \eqref{psik.Lq} follow since $Q'(x)/Q(x) = \BigO(1/x)$ implies $a_\mu/\mu = \BigO(1/x_\mu)$.

If \eqref{Q.beta} holds in addition, we obtain \eqref{aho.ef.est} from \eqref{psik.Lq} by employing \eqref{x.mu.beta}, \eqref{Q.beta.EV.asym}, \eqref{Q.convex} and $Q'(x)/Q(x) = \BigO(1/x)$ as $x \to \infty$.
\end{proof}

\subsection{Perturbations by functional potentials $V$ in weighted $L^p$-spaces}

We define the following spaces
\begin{equation}\label{Lptau.def}
 L(p,\tau):= \left\{
 v:(1+x^2)^{-\frac\tau 2}|v(x)| \in L^p(\R) 
 \right\}, \quad 1 \leq  p \leq \infty,  \ \ \tau \in \R,
\end{equation}
as in \cite{Adduci-2012-10,Mityagin-2016-106}; notice that $L(p,0)=L^p(\R)$.

The form associated with the perturbation by a functional potential $V$ reads
\begin{equation}\label{bV.def}
b_V[\psi]:=\int_\R V |\psi|^2, \quad 
\Dom(b_V):=\{ \psi \in L^2(\R) \,: \, V |\psi|^2 \in L^1(\R) \}.
\end{equation}

\begin{theorem}\label{thm:aho.Lptau}
Let $Q$ satisfy Assumption~\ref{asm:Q} and let $A$ be the Schr\"odinger operator from~\eqref{aho.def}. 
Suppose that $V \in L(p,\tau)$ with $1 \leq p \leq \infty$, $\tau \geq 0$, and, depending on $p$, one of the following conditions is satisfied
\begin{equation}\label{p.tau.cond}
\begin{aligned}
\tau &<  \frac23 (\beta-1) \left(1 - \frac 1{2p} \right) &&
\quad \text{if} \quad 1 \leq p <2,
\\
\tau &< \frac{\beta-2}{2} + \frac 1p 
 &&
\quad \text{if} \quad 2 \leq  p \leq \infty.
\end{aligned}
\end{equation}
Then $A$ and the form $b_V$ from~\eqref{bV.def} satisfy conditions \eqref{asm:A} and \eqref{asm:b} with 
\begin{equation}
\gamma = \frac{2 \beta}{\beta+2}, 
\qquad
\alpha= \frac{1}{\beta+2}
\begin{cases}
\frac{\beta+2}{6} + \frac{1-\beta}{3p} - \tau, & 1 \leq p <2,
\\[2mm]
\frac 12 - \tau -\eps , & p = 2,
\\[2mm]
\frac1p - \tau, & p > 2,
\end{cases}
\end{equation}
where $\eps>0$ can be taken arbitrarily small.
\end{theorem}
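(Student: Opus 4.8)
The plan is to obtain \eqref{asm:A} directly from Proposition~\ref{prop:Q.gaps} and to reduce \eqref{asm:b} to the weighted $L^q$-estimates of Proposition~\ref{prop:Q.Lq} via two applications of H\"older's inequality.

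For \eqref{asm:A}: under Assumption~\ref{asm:Q} the operator $A$ in \eqref{aho.def} is self-adjoint, bounded below, with compact resolvent and simple eigenvalues, and $\mu_k>0$ because $Q\ge 0$, $Q\not\equiv 0$; the required gap estimate with $\gamma=2\beta/(\beta+2)$ is precisely \eqref{Q.beta.EV.gaps}, since that asymptotics yields $\mu_{k+1}-\mu_k\ge\kappa k^{\gamma-1}$ for all large $k$ and some $\kappa>0$. For \eqref{asm:b}, write $V(x)=(1+x^2)^{\tau/2}W(x)$ with $W\in L^p(\R)$ (possible since $V\in L(p,\tau)$), and put $w(x):=(1+x^2)^{\tau/4}$. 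Because $\tau\ge 0$, the function $w$ is a positive even $C^1$-weight, increasing on $(0,\infty)$, with $w'/w=\BigO(1/x)$, so Proposition~\ref{prop:Q.Lq} applies to it. H\"older with exponents $p,p'$ and then a Cauchy--Schwarz step give
\[
|b_V(\psi_m,\psi_n)|\le\|W\|_{L^p}\,\big\|(1+x^2)^{\tau/2}\psi_m\psi_n\big\|_{L^{p'}}\le\|W\|_{L^p}\,\|w\psi_m\|_{L^{2p'}}\|w\psi_n\|_{L^{2p'}},
\]
with the reading $2p'=\infty$ when $p=1$; in particular $b_V(\psi_m,\psi_n)$ is well defined.

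It remains to estimate $\|w\psi_k\|_{L^{2p'}}$. By \eqref{psik.w.q} this is $\BigO\big(w(x_{\mu_k})\|\psi_k\|_{L^{2p'}}\big)$, while \eqref{x.mu.beta} and \eqref{Q.beta.EV.asym} give $w(x_{\mu_k})=\BigO\big(x_{\mu_k}^{\tau/2}\big)=\BigO\big(k^{\tau/(\beta+2)}\big)$, and $\|\psi_k\|_{L^{2p'}}$ is supplied by \eqref{aho.ef.est} (and by \eqref{sup|psi|} in the endpoint $p=1$, where $2p'=\infty$). A case distinction according to where $2p'$ lies---$2p'\in(4,\infty]$ for $1\le p<2$, $2p'=4$ for $p=2$ (here the $(\log k)^{1/4}$ in \eqref{aho.ef.est} is absorbed into an arbitrarily small loss $\eps$ in the exponent), and $2p'\in[2,4)$ for $2<p\le\infty$---followed by routine bookkeeping of exponents shows that in every case $\|w\psi_k\|_{L^{2p'}}=\BigO(k^{-\alpha})$ with $\alpha$ as in the statement. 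Since each $\psi_k$ decays faster than any polynomial, $\|w\psi_k\|_{L^{2p'}}$ is finite for every $k$, so the asymptotic bound upgrades to $\|w\psi_k\|_{L^{2p'}}\le C\,k^{-\alpha}$ for all $k\in\N$; combined with the displayed inequality this yields $|b_V(\psi_m,\psi_n)|\le M_b\,(mn)^{-\alpha}$ with $M_b=C^2\|W\|_{L^p}$.

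This is \eqref{asm:b} once we know $2\alpha+\gamma>1$, and this is exactly where hypothesis \eqref{p.tau.cond} is used: since $\gamma=2\beta/(\beta+2)$ one has $2\alpha+\gamma>1\iff\alpha>\frac{2-\beta}{2(\beta+2)}$, and substituting the two formulas for $\alpha$ turns this inequality into precisely the two cases of \eqref{p.tau.cond} (a direct computation). I expect the only genuinely delicate point to be the endpoint $p=1$, where $p'=\infty$ forces the use of the pointwise bound \eqref{sup|psi|} rather than the integral estimate \eqref{aho.ef.est}; all the remaining work is the bookkeeping of exponents in the three cases, which is where I anticipate the bulk of the (routine) effort to lie.
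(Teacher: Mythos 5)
Your proof is correct and takes essentially the same route as the paper: condition \eqref{asm:A} is read off from Proposition~\ref{prop:Q.gaps}, and \eqref{asm:b} is obtained by the same two-step H\"older/Cauchy--Schwarz reduction to the weighted $L^q$-estimates of Proposition~\ref{prop:Q.Lq} with the weight $(1+x^2)^{\tau/4}$, followed by the same exponent bookkeeping in the three cases $2p'>4$, $=4$, $<4$ (the paper's proof references precisely \eqref{psik.w.q} and \eqref{aho.ef.est} at this stage). Your account is slightly more explicit than the paper's terse write-up, and you correctly flag the endpoint $p=1$ as requiring \eqref{sup|psi|} in place of \eqref{aho.ef.est}.
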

\begin{proof}
It follows from Proposition~\ref{prop:Q.gaps} that the condition~\eqref{asm:A} is satisfied. To show that the condition~\eqref{asm:b} holds, we use the estimates for $L^q$-norms of $\{\psi_k\}$ from Proposition~\ref{prop:Q.Lq} with the weight $w(x)=(1+x^2)^{\tau/2}$. The rest is straightforward, like the proof of \cite[Thm.3]{Mityagin-2016-106}, 
\begin{equation}\label{bV.Lpt.est}
\begin{aligned}
|b_V(\psi_m,\psi_n)| & \leq \int_{\R} w^{-1} |V|  \, w |\psi_m| \, |\psi_n| \, \dd x 
\leq \|w^{-1} |V|\|_{L^p(\R)} \|w\psi_m \psi_n \|_{L^q(\R)} 
\\
& \leq C \|w^\frac12 \psi_m \|_{L^{2q}(\R)} \|w^\frac12 \psi_n \|_{L^{2q}(\R)},
\end{aligned}
\end{equation}
where $1/p +1/q =1$. The condition \eqref{asm:b} is satisfied due to \eqref{psik.w.q} and \eqref{aho.ef.est}.
\end{proof}

Putting together Theorems~\ref{thm:aho.Lptau} and \ref{thm:RB} we obtain the following claim on the eigensystem of the perturbed Schr\"odinger operators $T$. 

\begin{corollary}\label{cor:aho.Lptau}
Let $A$ be as in \eqref{aho.def} and $V \in L(p,\tau)$ with $p \in [1,\infty]$ and $\tau \geq 0$ satisfying \eqref{p.tau.cond}. Then the eigensystem of $T$, being the form sum of these $A$ and $V$, contains a Riesz basis.
\end{corollary}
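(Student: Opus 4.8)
The plan is to read the corollary as a one-line consequence of the abstract Riesz basis theorem, the substantive checking being Theorem~\ref{thm:aho.Lptau}. First I would invoke Theorem~\ref{thm:aho.Lptau}: under Assumption~\ref{asm:Q} and the constraint \eqref{p.tau.cond} on $(p,\tau)$ it asserts that the self-adjoint operator $A$ of \eqref{aho.def} and the perturbing form $b_V$ of \eqref{bV.def} satisfy \eqref{asm:A} and \eqref{asm:b}, with $\gamma=2\beta/(\beta+2)$ and an explicit exponent $\alpha=\alpha(p,\tau,\beta)$ (containing an arbitrarily small $\eps>0$ in the borderline case $p=2$). Since the strict inequality $2\alpha+\gamma>1$ is built into \eqref{asm:b}, nothing further is needed here; but to be safe I would verify the translation of \eqref{p.tau.cond} into $2\alpha+\gamma>1$. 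For $2\le p\le\infty$ this is the elementary identity $2\alpha+\gamma=\bigl(2/p-2\tau+2\beta\bigr)/(\beta+2)$, which exceeds $1$ precisely when $\tau<(\beta-2)/2+1/p$; for $1\le p<2$ the analogous computation turns the bound in \eqref{p.tau.cond} into $2\alpha+\gamma>1$; and at $p=2$ the strict inequality $\tau<(\beta-1)/2$ leaves room to pick $\eps$ small.

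Second, I would note that \eqref{asm:b} puts us exactly in the framework of Section~\ref{sec:def.op}: the form $b_V$ is then $p_0$-subordinated to $a$ for some $p_0\in[0,1)$ by Lemma~\ref{lem:subord}, hence form-bounded relative to $a$ with bound $0$, see \eqref{b.rb}, so that $t=a+b_V$ is sectorial and closed and the operator $T$ in the statement --- the form sum of $A$ and $V$, legitimate because $V\in L(p,\tau)\subset L^1_{\loc}(\R)$ --- is precisely the m-sectorial operator with compact resolvent determined by $t$, as in \eqref{T.def}. Consequently Propositions~\ref{prop:complete} and~\ref{prop:loc} and Theorem~\ref{thm:ev.asym} apply to this $T$, giving in particular the threshold $N_1$ for which \eqref{phi_n.EF} holds.

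Third and last, I would apply Theorem~\ref{thm:RB}: with $N_1$ as above and $S_{N_1}$ the Riesz projection onto the span of the finitely many root vectors of $T$ attached to eigenvalues in $\Pi_0$, the system obtained by completing the eigenvectors $\{\phi_n\}_{n>N_1}$ of \eqref{phi_n.def} by a basis of $\Ran(S_{N_1})$ is a Riesz basis of $L^2(\R)$. In particular the eigensystem of $T$ contains a Riesz basis, which is the assertion.

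The only real work sits upstream, inside Theorem~\ref{thm:aho.Lptau} rather than in the corollary: the matrix-element bound $|b_V(\psi_m,\psi_n)|\le M_b\,m^{-\alpha}n^{-\alpha}$ is obtained from the weighted $L^q$-eigenfunction estimates of Proposition~\ref{prop:Q.Lq} (via H\"older, as in \eqref{bV.Lpt.est}) together with the eigenvalue gap asymptotics of Proposition~\ref{prop:Q.gaps}; granting these, the passage from Theorems~\ref{thm:aho.Lptau} and~\ref{thm:RB} to the corollary is purely formal, so I do not expect any genuine obstacle at this level.
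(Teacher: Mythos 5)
Your proposal is correct and is exactly the route the paper takes: the paper introduces the corollary with the sentence ``Putting together Theorems~\ref{thm:aho.Lptau} and~\ref{thm:RB} we obtain the following claim.'' Your added consistency checks (verifying $2\alpha+\gamma>1$ from \eqref{p.tau.cond}, and that the form sum $T$ is well-defined via Section~\ref{sec:def.op}) are sound but already packaged into the hypotheses of Theorems~\ref{thm:aho.Lptau} and~\ref{thm:RB}.
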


\subsubsection{$L^1$-potentials with a controlled decay}

We consider a potential $V \in L^1(\R)$ with the decay $|x|^{-1-\eps}$ for some $\eps>0$ at infinity. More precisely, we suppose that
\begin{equation}\label{V.decay}
\begin{aligned}
&V= V_1 + V_2,
\\ &V_1 \in L^1(\R), \ \supp V_1 \ \text{is compact},
\\ &\exists \eps>0, \ V _2 \in L(\infty,-(1+\eps)).
\end{aligned}
\end{equation}
Since such a $V$ is integrable on $\R$, it follows from Theorem~\ref{thm:aho.Lptau} that the form associated with this $V$ satisfies the condition \eqref{asm:b} with $\alpha = (4-\beta)/(6(\beta+2))$. We show in the following that the latter improves if \eqref{V.decay} is satisfied, moreover, we derive a more convenient formula for the first correction $\lambda_n^{(1)}$ from Theorem~\ref{thm:ev.asym}.

\begin{theorem}\label{thm:aho.ev}
Let $Q$ satisfy Assumption~\ref{asm:Q}, let $A$ be the Schr\"odinger operator from~\eqref{aho.def} and let $V$ satisfy \eqref{V.decay}. Then the form $b_V$ from~\eqref{bV.def} satisfy the condition \eqref{asm:b} with 
\begin{equation}\label{alpha.dec}
\alpha = \frac{1}{\beta+2}.
\end{equation}
Moreover, the terms $\{\lambda_n^{(1)}\}$ from Theorem~\ref{thm:ev.asym} for $T$ being the form sum of $A$ and $V$ satisfy 
\begin{equation}\label{lam.1.dec}
\lambda_n^{(1)} =  
\frac{1}{\Omega_\beta'}
\left(
\frac{\pi}{\Omega_\beta} n
\right)^{-\frac{2}{\beta+2}} \int_\R V(x) \; \dd x + o \left(n^{-\frac{2}{\beta+2}}\right), \quad n \to \infty,
\end{equation}
where $\Omega_\beta$, $\Omega_\beta'$ are as in \eqref{Omega.beta}.

\end{theorem}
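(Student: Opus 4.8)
The plan is to split $V = V_1 + V_2$ as in \eqref{V.decay} and treat the two pieces by different weighted $L^p$-estimates, then to extract the asymptotics of $\lambda_n^{(1)} = b_V(\psi_n,\psi_n)$ from the pointwise asymptotics of $u_k^2$ in \eqref{u^2.exp}. For the first claim \eqref{alpha.dec}: the compactly supported piece $V_1 \in L^1$ has support in some $[-l_1,l_1]$, so for $x$ in this fixed set the eigenfunction bound \eqref{|psi|} gives $|\psi_m(x)| = \BigO(m^{-1/(\beta+2)})$ uniformly, whence $|b_{V_1}(\psi_m,\psi_n)| \leq \|V_1\|_{L^1} \sup_{[-l_1,l_1]}|\psi_m|\,|\psi_n| = \BigO(m^{-1/(\beta+2)} n^{-1/(\beta+2)})$, i.e.\ \eqref{asm:b} with $\alpha = 1/(\beta+2)$. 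The decaying piece $V_2 \in L(\infty, -(1+\eps))$ means $(1+x^2)^{(1+\eps)/2} V_2 \in L^\infty$; applying \eqref{bV.Lpt.est} with $p=\infty$, $q=1$, $w(x) = (1+x^2)^{-(1+\eps)/2}$ — note $w$ is \emph{decreasing}, so Proposition~\ref{prop:Q.Lq} does not apply directly and I would instead bound $|b_{V_2}(\psi_m,\psi_n)| \leq \|(1+x^2)^{(1+\eps)/2}V_2\|_\infty \int_\R (1+x^2)^{-(1+\eps)/2}|\psi_m||\psi_n|$ and use $|\psi_k(x)| = \BigO(x_{\mu_k}^{-1/2})$ together with $\int_\R (1+x^2)^{-(1+\eps)/2} < \infty$ to get $\BigO(x_{\mu_m}^{-1/2} x_{\mu_n}^{-1/2}) = \BigO(m^{-1/(\beta+2)} n^{-1/(\beta+2)})$ by \eqref{x.mu.beta}, \eqref{Q.beta.EV.asym}. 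This again gives $\alpha = 1/(\beta+2)$, and since $2\alpha + \gamma = 2/(\beta+2) + 2\beta/(\beta+2) = 2 > 1$, the abstract hypotheses hold.

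For the asymptotic formula \eqref{lam.1.dec}: by Remark~\ref{rem:ev.cor}, $\lambda_n^{(1)} = b_V(\psi_n,\psi_n) = \int_\R V(x) |\psi_n(x)|^2 \, \dd x$. Using \eqref{psi.u.rel}, $|\psi_n(x)|^2 = \|u_n\|^{-2} u_n(x)^2 (1 + \BigO(x_{\mu_n}^{-1}\mu_n^{-1/2}))$ for $x>0$, and by evenness the full-line integral is $\|u_n\|^{-2}\int_\R V(x) u_n(|x|)^2 \, \dd x$ up to the small multiplicative error. From \eqref{u.norm.beta}, $\|u_n\|^{-2} = (\pi \Omega_\beta')^{-1} (\frac{\pi}{\Omega_\beta} n)^{-(2-\beta)/(2+\beta)}(1+o(1))$. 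It remains to show $\int_\R V(x) u_n(|x|)^2 \, \dd x \to \mu_n^{1/2} \cdot (\text{something})$; more precisely, I expect $u_n(x)^2 \to \pi \mu_n^{-1/2}(1 + \text{oscillation})$ pointwise in the classically allowed region by \eqref{u^2.exp}, since for any fixed $x$ and large $n$ the turning point $x_{\mu_n} \to \infty$, $\zeta(x,\mu_n) \to \infty$, and $(\mu_n - Q(x))^{1/2} = \mu_n^{1/2}(1+o(1))$. Thus $\mu_n^{1/2} u_n(x)^2 \to \pi(1 + \sin 2\zeta_n(x))$, and after multiplying by $V(x)$ and integrating, a Riemann--Lebesgue-type argument kills the oscillatory term $\int V(x)\sin 2\zeta_n(x)\,\dd x \to 0$ (the phase $\zeta_n(x)$ has derivative $-(\mu_n - Q(x))^{1/2} \sim -\mu_n^{1/2} \to -\infty$ in $x$), leaving $\mu_n^{1/2}\int_\R V(x) u_n(|x|)^2\,\dd x \to \pi \int_\R V(x)\,\dd x$. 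Combining with the norm asymptotics and $\mu_n^{1/2} = (\frac{\pi}{\Omega_\beta}n)^{\beta/(\beta+2)}(1+o(1))$, and checking $(2-\beta)/(2+\beta) + \beta/(\beta+2)$ — wait, one needs $\lambda_n^{(1)} = \|u_n\|^{-2}\mu_n^{-1/2}\cdot \pi\int V + o(\cdot)$, and $\|u_n\|^{-2}\mu_n^{-1/2} \sim (\pi\Omega_\beta')^{-1}(\frac{\pi}{\Omega_\beta}n)^{-(2-\beta)/(2+\beta)}(\frac{\pi}{\Omega_\beta}n)^{-\beta/(\beta+2)} = (\pi\Omega_\beta')^{-1}(\frac{\pi}{\Omega_\beta}n)^{-2/(\beta+2)}$, giving exactly \eqref{lam.1.dec}.

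The main obstacle is making the limit $\mu_n^{1/2}\int_\R V(x) u_n(|x|)^2\,\dd x \to \pi\int_\R V$ rigorous \emph{uniformly enough to integrate against $V$}, i.e.\ justifying the interchange of limit and integral. Near the origin and on compact sets the pointwise convergence \eqref{u^2.exp} with the explicit error $|R_1(\zeta)| < 1/(2\zeta)$ is fine and one can dominate using \eqref{|u|.est} (the bound $|u| \leq A_1(\mu - Q(x))^{-1/4}$ gives $\mu_n^{1/2}u_n^2 \leq A_1^2 \mu_n^{1/2}(\mu_n - Q(x))^{-1/2}$, bounded by a constant on any fixed compact set for large $n$); for the $V_1$ part this dominated-convergence argument suffices directly since $V_1 \in L^1$ has compact support. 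For the $V_2$ tail one must control the contribution of $x$ near and beyond the turning point $x_{\mu_n}$: there $u_n$ is not $\sim \mu_n^{-1/4}$, but \eqref{|u|.est} still bounds $\mu_n^{1/2}|u_n(x)|^2$ and the decay $|V_2(x)| \leq C(1+x^2)^{-(1+\eps)/2}$ together with the fact that $x_{\mu_n} \to \infty$ forces this tail contribution to $0$; the oscillatory cancellation (Riemann--Lebesgue against $V_2 \in L^1$) on the bulk region completes the estimate. Assembling these three regimes (fixed compact core, intermediate region up to $x_{\mu_n}$, far tail) with the right dominating functions is the technical heart of the proof.
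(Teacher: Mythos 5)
Your overall strategy mirrors the paper's: express $\lambda_n^{(1)}=b_V(\psi_n,\psi_n)=\int_\R V|\psi_n|^2$, replace $|\psi_n|^2$ by $u_n^2/\|u_n\|^2$ via \eqref{psi.u.rel}, use the Langer-type asymptotics \eqref{u^2.exp} in the bulk of the classically allowed region, kill the oscillatory $\sin 2\zeta$ term by a Riemann--Lebesgue argument after smooth approximation of $V$, and control the turning-point and tail contributions by regional splitting with \eqref{|u|.est}. The constant computation at the end is also correct. The chief difference from the paper is that you treat the bound \eqref{alpha.dec} and the asymptotics \eqref{lam.1.dec} separately, whereas the paper proves the single integral asymptotics \eqref{Vu^2.asym} once and extracts both claims from it (applying \eqref{Vu^2.asym} to $|V|$ and then Cauchy--Schwarz $|b_V(\psi_m,\psi_n)|\le(\int|V||\psi_m|^2)^{1/2}(\int|V||\psi_n|^2)^{1/2}$ gives \eqref{alpha.dec} directly, with no case split between $V_1$ and $V_2$).

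There is one genuine gap in your argument for \eqref{alpha.dec}, namely in the $V_2$ step. You bound
\begin{equation*}
|b_{V_2}(\psi_m,\psi_n)| \le \|(1+x^2)^{(1+\eps)/2}V_2\|_\infty \int_\R (1+x^2)^{-(1+\eps)/2}|\psi_m||\psi_n|\,\dd x
\end{equation*}
and then invoke ``$|\psi_k(x)|=\BigO(x_{\mu_k}^{-1/2})$'' inside the integral. But \eqref{|psi|} is a pointwise estimate at a \emph{fixed} $x$; it is \emph{not} a uniform bound on $\R$. The uniform bound is \eqref{sup|psi|}, $\|\psi_k\|_\infty = \BigO(\mu_k^{1/4}x_{\mu_k}^{-1/2}a_{\mu_k}^{-1/6})$, which under \eqref{Q.beta} behaves like $x_{\mu_k}^{(\beta-4)/12}$ --- strictly larger than $x_{\mu_k}^{-1/2}$ for every $\beta>1$. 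If you substitute the correct uniform bound, the step only yields $\alpha=(4-\beta)/(6(\beta+2))$, i.e.\ the generic value already obtained from Theorem~\ref{thm:aho.Lptau} with $V\in L^1$, not the sharper $\alpha=1/(\beta+2)$ the theorem asserts. The bound you want, $\int (1+x^2)^{-(1+\eps)/2}|\psi_n|^2=\BigO(x_{\mu_n}^{-1})$, is true, but to reach it you must split into the bulk $|x|\lesssim x_{\mu_n}/2$ (where the pointwise bound is indeed uniform), the transition region near $x_{\mu_n}$ (where $|\psi_n|$ peaks but the weight is $\BigO(x_{\mu_n}^{-1-\eps})$ and the region has width $\BigO(a_{\mu_n}^{-1/3})$ by \eqref{delta.amu}), and the exponential tail --- essentially the same three-region analysis you outline for the asymptotics, now also needed here. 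Once that is done the $V_2$ part closes, and the rest of the argument is sound.
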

\begin{proof}
We show below that 
\begin{equation}\label{Vu^2.asym}
\int_0^\infty V  u^2  \, \dd x = \frac{\pi}{\mu^\frac12} \int_0^\infty V \; \dd x \; (1+o(1)), \quad \mu \to \infty.
\end{equation}
Hence, using that $u^2$ are even, \eqref{u.norm.beta}, \eqref{psi.u.rel} and
\begin{equation}
|b_V(\psi_m,\psi_n)| \leq \left(\int_\R |V||\psi_m|^2 \right)^\frac12 \left(\int_\R |V||\psi_n|^2 \right)^\frac12,
\end{equation}
we obtain that $b_V$ satisfies the condition \eqref{asm:b} with $\alpha$ in \eqref{alpha.dec}. The claim \eqref{lam.1.dec} follows from \eqref{EV.cor}, \eqref{Vu^2.asym}, \eqref{u.norm.beta} and \eqref{Q.beta.EV.asym}.

It remains to prove the key step \eqref{Vu^2.asym}. We analyze the integral in \eqref{Vu^2.asym} separately in several regions.

\noindent
$\bullet$ $0<x< x_{\sqrt \mu}$: As $\mu \to \infty$, we have
\begin{equation}\label{Q.mu12}
\frac{1}{(\mu-Q(x))^\frac12} - \frac{1}{\mu^\frac12} = \frac{Q(x)}{\mu^\frac12 (\mu-Q(x))^\frac12 \left(\mu^\frac12+(\mu-Q(x))^\frac12\right)} = \BigO(\mu^{-1}).
\end{equation}
Hence formula \eqref{u^2.exp} and $V \in L^1(\R)$ give
\begin{equation}
\int_0^{x_{\sqrt\mu}}V  u^2 \, \dd x  =
\frac{\pi}{\mu^\frac12} \int_0^{x_{\sqrt\mu}} V (1+\sin 2 \zeta + R_1(\zeta)) \, \dd x  + \BigO(\mu^{-1}), \quad \mu \to \infty.
\end{equation}
Next we show that 
\begin{equation}\label{int.V.sin}
\int_0^{x_{\sqrt\mu}}  V \sin 2 \zeta \, \dd x = o(1), \quad \mu \to \infty
\end{equation}
and
\begin{equation}\label{int.V.R}
\int_0^{x_{\sqrt\mu}}  V R_1(\zeta) \, \dd x = \BigO(\mu^{-\frac 12} x_\mu^{-1}), \quad \mu \to \infty,
\end{equation}
therefore
\begin{equation}\label{int.V.main}
\int_0^{x_{\sqrt\mu}}V  u^2 \, \dd x  =
\frac{\pi}{\mu^\frac12} \int_0^{x_{\sqrt\mu}} V  \, \dd x +  o(\mu^{-\frac12}), \quad \mu \to \infty.
\end{equation}

For any $\eps>0$, find $V_\eps \in C_0^\infty(\R)$ such that $\|V-V_\eps\|_{L^1(\R)}<\eps$. Then 
\begin{equation}\label{int.V.sin.1}
\left|\int_0^{x_{\sqrt\mu}}  V  \sin 2 \zeta \, \dd x \right| 
\leq \eps \|V\|_{L^1(\R)} + 
\left|  
\int_0^{x_{\sqrt\mu}}  V_\eps \sin 2 \zeta \, \dd x
\right|. 
\end{equation}
The integration by parts yields 
\begin{equation}\label{int.V.sin.2}
\begin{aligned}
\int_0^{x_{\sqrt\mu}}  V_\eps \sin 2 \zeta \, \dd x
&= - \frac{\cos (2\zeta(0) )V_\eps(0)}{2} \mu^{-\frac12}
\\
& \quad - \int_0^{x_{\sqrt\mu}} \frac{\cos 2\zeta}{2} \left(\frac{V_\eps'}{(\mu-Q)^\frac12} + \frac{V_\eps Q'} {2(\mu-Q)^\frac32}  \right) \; \dd x
\\
& = \BigO(\mu^{-\frac12}) + \BigO(a_\mu \mu^{-\frac32}) = \BigO(\mu^{-\frac12}), \quad \mu \to \infty,
\end{aligned}
\end{equation}
where we use that $V_\eps(x_{\sqrt \mu}) =0$ for all sufficiently large $\mu$ and \eqref{Q.asym} in the last step. Since $\eps>0$ is arbitrary, we conclude with \eqref{int.V.sin}.

Since $|R_1(\zeta)|<1/\zeta$, see \eqref{u^2.exp} and below, and $\zeta$ is decreasing, we have
\begin{equation}\label{int.V.R.1}
\int_0^{x_{\sqrt\mu}} | V R_1(\zeta)| \, \dd x \leq \frac{\|V\|_{L^1(\R)}}{\zeta(x_{\sqrt\mu})} 
\leq
\frac{\|V\|_{L^1(\R)}}{(\mu-\mu^\frac12)^\frac12 (x_\mu - x_{\sqrt\mu})}.
\end{equation}
By the mean value theorem,
\begin{equation}\label{int.V.R.2}
x_\mu - x_{\sqrt\mu} = Q^{-1}(\mu) - Q^{-1}(\mu^\frac12) \geq \frac{\mu - \mu^\frac12}{a_\mu},
\end{equation}
thus \eqref{int.V.R} follows from \eqref{int.V.R.1}, \eqref{int.V.R.2} and  \eqref{Q.asym}.

\noindent
$\bullet$ $x_{\sqrt \mu}<x <x_{\frac \mu 2}$: From \eqref{|u|.est} and $V \in L^1(\R)$, we have
\begin{equation}
\begin{aligned}
\int_{x_{\sqrt\mu}}^{x_\frac\mu 2 } |V| u^2 \; \dd x 
&\leq 
A_1^2 \int_{x_{\sqrt\mu}}^{x_\frac\mu 2 } \frac{|V| \, \dd x}{(\mu-Q)^\frac12} 
\\
& = \BigO(\mu^{-\frac12}) \int_{x_{\sqrt\mu}}^{x_\frac\mu 2 } |V| \, \dd x = o(\mu^{-\frac12}), \quad \mu \to \infty.
\end{aligned}
\end{equation}
\noindent
$\bullet$ $x_{\frac \mu 2}<x <x_{\frac 32 \mu}$: Since the support of $V_1$ is compact, only the integral with $V_2$ contributes for large $\mu$. Due to the controlled decay of $V_2$, see \eqref{V.decay}, we have
\begin{equation}
\int_{x_\frac\mu 2}^{x_{\frac 32 \mu}} |V| u^2 \; \dd x 
= \BigO(x_\mu^{-1-\eps})  \int_{x_\frac\mu 2}^{x_{\frac 32 \mu}} u^2 \; \dd x, \quad \mu \to \infty.
\end{equation}
The integral of $u^2$ is estimated in the proof of Proposition~\ref{prop:Q.Lq}, namely,
\begin{equation}\label{u2.int}
\left(\int_{x_\frac\mu 2}^{x_\mu-\delta} + \int_{x_\mu-\delta}^{x_\mu+\delta_1} + 
\int_{x_\mu+\delta_1}^{x_{\frac 32 \mu}}\right)  u^2 \; \dd x
= \BigO(a_\mu^{-1} \mu^\frac 12) + \BigO(a_\mu^{-\frac 23}),  \quad \mu \to \infty.
\end{equation}
Thus, we get from \eqref{Q.convex} that
\begin{equation}
\int_{x_\frac\mu 2}^{x_{\frac 32 \mu}} |V| u^2 \; \dd x = \BigO(\mu^{-\frac 12} x_\mu^{-\eps}), \quad \mu \to \infty.
\end{equation}

\noindent
$\bullet$ $x_{\frac 32 \mu}<x$: Again, there is no contribution of $V_1$ for large $\mu$ and \eqref{V.decay}, \eqref{|u|.est} and \eqref{zeta.32} yield (with some $C>0$, see also \eqref{u.q.32})
\begin{equation}
\int_{x_{\frac32 \mu} }^\infty |V| u^2 \; \dd x = \BigO(x_\mu^{-1-\eps} e^{-C \mu^\frac 12 x_\mu}), \quad  \mu \to \infty.
\end{equation}

Putting all these estimates together, we indeed get \eqref{Vu^2.asym}.
\end{proof}

\begin{remark}\label{rem:beta.inf}
For special choice $Q(x) = |x|^\beta$, $\beta \geq 2$, we obtain more precise asymptotics for $\{\mu_n\}$ from \eqref{EV.Tit}. Thus we can conclude further that, for $V$ as in \eqref{V.decay}, the eigenvalues $\{\lambda_n\}$ of $T$ satisfy as $n \to \infty$
\begin{equation}\label{la.n.beta.dec}
\lambda_n = \left( \frac{\pi}{\Omega_\beta} \left(n+\frac12 \right) \right)^\frac{2\beta}{\beta+2} + 
\frac{1}{\Omega_\beta'}
\left(
\frac{\pi}{\Omega_\beta} n
\right)^{-\frac{2}{\beta+2}} \int_\R V(x) \; \dd x + o \left(n^{-\frac{2}{\beta+2}}\right).
\end{equation}
When taking formally the limit $\beta \to + \infty$, the correction \eqref{lam.1.dec} due to $V$ becomes \eqref{SL.la.asym} with $l=1$; notice that the formula \eqref{SL.la.asym} is valid also for the perturbation of  $-\dd^2/\dd x^2$ with Dirichlet boundary conditions.
\end{remark}

\subsection{Perturbations by singular potentials}

Let $V \in W^{-s,2}(\R)$ with some $s \geq 0$, so
\begin{equation}\label{V.sing.delta}
\exists C>0, \ \exists s \geq 0, \ \forall \phi \in W^{1,2}(\R), \ |(V,\phi)| \leq C \|\phi\|_{W^{1,2}(\R)}^s \|\phi\|^{1-s},
\end{equation}
and define the form
\begin{equation}\label{bV.sing}
b_V(\phi,\psi):=(V,\phi\overline{\psi}), \quad \phi, \psi \in \Dom(a).
\end{equation}
An extension of Theorem~\ref{thm:aho.Lptau} for singular potentials is Theorem~\ref{thm:aho.sing} below, where sufficient conditions on $V$ so that $b_V$ satisfies \eqref{asm:b} are stated. In the proof, the following estimates of $L^q$-norms of $\{\psi_k'\}$ are used.
\begin{lemma}
Let $Q$ satisfy Assumption \ref{asm:Q} with \eqref{Q.beta} replaced by \eqref{Q.unbd}, let $\{\mu_k\}$ be the eigenvalues of $A$ and let $\{\psi_k\}$ be as in \eqref{psi.u.rel}. Then 
\begin{equation}\label{psi_k'.q}
\|\psi'_k\|_{L^q(\R)} = \BigO \left( \mu_k^\frac 12 \|\psi_k\|_{L^q(\R)} \right), \quad k \to \infty.
\end{equation}
\end{lemma}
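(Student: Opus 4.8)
The plan is to deduce \eqref{psi_k'.q} from a one–dimensional Landau--Kolmogorov (Gagliardo--Nirenberg) interpolation inequality combined with the eigenvalue equation $-\psi_k'' + Q\psi_k = \mu_k\psi_k$. First I would record the elementary estimate
\begin{equation*}
\|f'\|_{L^q(\R)} \le 2\sqrt 2\,\|f\|_{L^q(\R)}^{1/2}\,\|f''\|_{L^q(\R)}^{1/2}, \qquad 1 \le q \le \infty,
\end{equation*}
valid for $f \in W^{2,q}(\R)$. It follows in two lines: Taylor's formula $f'(x) = h^{-1}(f(x+h)-f(x)) - h^{-1}\int_x^{x+h}(x+h-t)f''(t)\,\dd t$ together with the triangle and Minkowski integral inequalities gives $\|f'\|_{L^q(\R)} \le 2h^{-1}\|f\|_{L^q(\R)} + h\|f''\|_{L^q(\R)}$ for every $h>0$, and one optimizes in $h$. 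The eigenfunctions qualify for all $q$ because $Q \in C^2(\R)$ forces $\psi_k \in C^4(\R)$, while $\psi_k$, $\psi_k'$, $\psi_k''$ decay super-exponentially by \eqref{|u|.est} (note $\psi_k'' = (Q-\mu_k)\psi_k$ and the polynomial growth of $Q$ is killed by the exponential tail there).

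With the interpolation inequality in hand, the task reduces to showing $\|\psi_k''\|_{L^q(\R)} = \BigO\bigl(\mu_k\,\|\psi_k\|_{L^q(\R)}\bigr)$. Using $\psi_k'' = (Q-\mu_k)\psi_k$ I would split $\R$ into the region $\{Q \le 2\mu_k\}$, where $|Q-\mu_k|\le\mu_k$ yields the bound $\mu_k\|\psi_k\|_{L^q(\R)}$ at once, and the classically forbidden tail $\{Q > 2\mu_k\} = \{|x| > x_{2\mu_k}\}$ (here I use that $Q$ is even and increasing on $(0,\infty)$). On the tail I would insert the exponential decay estimate in the third line of \eqref{|u|.est}, together with $\|u_k\|^{-1} = \BigO(\mu_k^{-1/4}a_{\mu_k}^{1/2})$ from \eqref{u.norm.low} and the lower bound $\int_{x_{\mu_k}}^{x_{2\mu_k}}(Q-\mu_k)^{1/2}\,\dd x \gtrsim \mu_k^{3/2}a_{\mu_k}^{-1}$ (obtained from the mean value theorem and \eqref{Q.asym}), to conclude that $\bigl\||Q-\mu_k|\,\mathbf 1_{\{Q>2\mu_k\}}\,\psi_k\bigr\|_{L^q(\R)}$ goes to $0$ faster than any power of $\mu_k$. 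Since $\|\psi_k\|_{L^q(\R)}$ is bounded below by a fixed negative power of $\mu_k$ — which one reads off from \eqref{u^2.exp} evaluated on a fixed interval around the origin, where $Q$ is bounded and $\zeta\to\infty$ — this tail term is negligible against $\mu_k\|\psi_k\|_{L^q(\R)}$, and \eqref{psi_k'.q} follows.

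I expect the only genuine work to be the tail estimate: one must check, uniformly in $k$, that the polynomial growth of $|Q(x)-\mu_k|$ and of the prefactor $(Q(x)-\mu_k)^{-1/4}$ in \eqref{|u|.est} is defeated by the exponential weight $\exp\bigl(-\int_{x_{\mu_k}}^x(Q-\mu_k)^{1/2}\bigr)$, which rests on $Q'$ being non-decreasing and on \eqref{Q.asym}; the rest is routine. An alternative, purely pointwise route would be to differentiate the Langer/Bessel representation \eqref{u.def} to obtain bounds on $|u_k'|$ region by region and then repeat the proof of Proposition~\ref{prop:Q.Lq} with $u_k'$ in place of $u_k$; this avoids the lower bound on $\|\psi_k\|_{L^q(\R)}$ but demands the more delicate analysis of $u_k'$ across the turning point $x_{\mu_k}$, so I would prefer the interpolation argument above.
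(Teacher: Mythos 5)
Your argument and the paper's share the same backbone: both invoke the one--dimensional Gagliardo--Nirenberg inequality $\|\psi_k'\|_{L^q(\R)} \le C\,\|\psi_k''\|_{L^q(\R)}^{1/2}\|\psi_k\|_{L^q(\R)}^{1/2}$ and then feed in the eigenvalue equation $\psi_k'' = (Q-\mu_k)\psi_k$, so everything reduces to showing $\|(Q-\mu_k)\psi_k\|_{L^q(\R)} = \BigO\bigl(\mu_k\|\psi_k\|_{L^q(\R)}\bigr)$. Where you part ways is in this last bound. The paper observes that $Q$ is an admissible weight in Proposition~\ref{prop:Q.Lq} --- it is positive, even, increasing on $(0,\infty)$ and satisfies \eqref{w.1x} by \eqref{Q.asym} --- so \eqref{psik.w.q} gives $\|Q\psi_k\|_{L^q(\R)} = \BigO\bigl(Q(x_{\mu_k})\|\psi_k\|_{L^q(\R)}\bigr) = \BigO\bigl(\mu_k\|\psi_k\|_{L^q(\R)}\bigr)$ at once, and the proof closes in two lines. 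You instead re-derive a special case of that weighted estimate by splitting $\R$ into $\{Q\le 2\mu_k\}$ and the forbidden tail $\{Q>2\mu_k\}$. This works, but on the tail you are forced to compare an exponentially small quantity against $\mu_k\|\psi_k\|_{L^q(\R)}$, and hence to prove a polynomial lower bound on $\|\psi_k\|_{L^q(\R)}$. That bound is true but not a one-liner: \eqref{u^2.exp} gives $u^2 \sim \mu^{-1/2}(1+\sin 2\zeta)$, which vanishes twice per period, so a pointwise lower bound near the origin is not available; you would have to integrate over at least one oscillation (of length $\sim\mu^{-1/2}$, so many fit in a fixed interval) to extract the needed lower bound, and this step deserves to be written out rather than ``read off''. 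In short: the key reduction is identical, your execution is correct but strictly longer, and the extra work evaporates once you notice that Proposition~\ref{prop:Q.Lq} already covers the weight $w = Q$.
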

\begin{proof}
By the Gagliardo-Nirenberg interpolation inequality with $\alpha=1/2$, see~\cite[Lecture~II]{Nirenberg-1959-13}, and since $\psi_k$ is an eigenfunction of $A$, we get
\begin{equation}\label{psi'.GN}
\begin{aligned}
\|\psi'_k\|_{L^q(\R)} 
&\leq C \|\psi_k''\|_{L^q(\R)}^\frac12 \|\psi_k\|_{L^q(\R)}^\frac12 
\\
&\leq C \left(\mu_k^\frac12 \|\psi_k\|_{L^q(\R)}^\frac12  + \|Q^\frac12\psi_k\|_{L^q(\R)}^\frac12 \right) \|\psi_k\|_{L^q(\R)}^\frac12.
\end{aligned}
\end{equation}
Finally, $Q^\frac{1}{2}$ satisfies the conditions on the weight $w$ in Proposition~\ref{prop:Q.Lq} and so \eqref{psi_k'.q} follows from~\eqref{psi'.GN} and \eqref{psik.w.q} with $w = Q^\frac12$.
\end{proof}

\begin{theorem}\label{thm:aho.sing}
Let $Q$ satisfy Assumption~\ref{asm:Q} and let $A$ be the Schr\"odinger operator from~\eqref{aho.def}. 
If $V \in W^{-s,2}(\R)$ with 
\begin{equation}\label{beta.sing}
0 \leq s< \frac{\beta-1}{2\beta},
\end{equation}
then $A$ and the form $b_V$ from~\eqref{bV.sing} satisfy the condition \eqref{asm:b} with 
\begin{equation}\label{V.sing.alpha}
\gamma = \frac{2 \beta}{\beta+2}, 
\qquad
\alpha= \frac{1-2\beta s}{2(\beta +2)}.
\end{equation}
If the support of $V$ is compact and 
\begin{equation}\label{V.sing.s.comp}
0 \leq s< \frac 12,
\end{equation}
then $A$ and the form $b_V$ from~\eqref{bV.sing} satisfy the condition \eqref{asm:b} with 
\begin{equation}\label{V.sing.alpha.comp}
\gamma = \frac{2 \beta}{\beta+2}, 
\qquad
\alpha= \frac{1-\beta s}{\beta +2}.
\end{equation}
\end{theorem}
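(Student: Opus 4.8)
The statement about $\gamma$ is already contained in Proposition~\ref{prop:Q.gaps} (it is \eqref{Q.beta.EV.asym}), so the whole task is to bound $|b_V(\psi_m,\psi_n)|=|(V,\psi_m\psi_n)|$. The plan is to combine the defining inequality \eqref{V.sing.delta} of $W^{-s,2}$ with the elementary interpolation inequality
$$\|\phi\|_{W^{s,2}(\R)}\le\|\phi\|^{1-s}\,\|\phi\|_{W^{1,2}(\R)}^{s}\qquad(0\le s\le1),$$
applied to $\phi=\psi_m\psi_n$, which lies in $W^{1,2}(\R)$ since $\psi_m,\psi_n\in\Dom(A)\subset W^{2,2}(\R)\subset L^\infty(\R)$ and decay at infinity. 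Thus everything reduces to controlling $\|\psi_m\psi_n\|$ and $\|(\psi_m\psi_n)'\|$ by powers of $m,n$, and both are handled through the Leibniz rule $(\psi_m\psi_n)'=\psi_m'\psi_n+\psi_m\psi_n'$, Cauchy--Schwarz in the form $\|fg\|\le\|f\|_{L^q(\R)}\|g\|_{L^{q'}(\R)}$ with $\tfrac1q+\tfrac1{q'}=\tfrac12$, the estimate $\|\psi_k'\|_{L^q(\R)}=\BigO(\mu_k^{1/2}\|\psi_k\|_{L^q(\R)})$ from~\eqref{psi_k'.q}, the $L^q$-bounds \eqref{aho.ef.est} of Proposition~\ref{prop:Q.Lq}, and $\mu_k=\BigO(k^\gamma)$.

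For the first part I take $q=q'=4$. Then $\|\psi_m\psi_n\|\le\|\psi_m\|_{L^4(\R)}\|\psi_n\|_{L^4(\R)}$ and $\|(\psi_m\psi_n)'\|\le C(\mu_m^{1/2}+\mu_n^{1/2})\|\psi_m\|_{L^4(\R)}\|\psi_n\|_{L^4(\R)}$ by~\eqref{psi_k'.q}; since the $\mu_k$ are bounded below, $1+\mu_m^{1/2}+\mu_n^{1/2}\le C(\mu_m\mu_n)^{1/2}$, so $\|\psi_m\psi_n\|_{W^{1,2}(\R)}\le C(\mu_m\mu_n)^{1/2}\|\psi_m\|_{L^4(\R)}\|\psi_n\|_{L^4(\R)}$. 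Feeding this into the interpolation bound, the two $L^4$-factors collapse to the first power and, by \eqref{Q.beta.EV.asym} and \eqref{aho.ef.est} with $q=4$,
$$|b_V(\psi_m,\psi_n)|\le C\,(\mu_m\mu_n)^{s/2}\|\psi_m\|_{L^4(\R)}\|\psi_n\|_{L^4(\R)} =\BigO\!\left((mn)^{\frac{2\beta s-1}{2(\beta+2)}}(\log mn)^{1/2}\right).$$
This is \eqref{asm:b} with the exponent $\alpha$ of~\eqref{V.sing.alpha}, up to the logarithm produced by the critical Lebesgue index $4$. Exactly as in the $p=2$ line of Theorem~\ref{thm:aho.Lptau}, this logarithm is harmless: replacing $L^4$ by $L^q,L^{q'}$ with $q$ slightly above and $q'$ slightly below $4$ removes it at the cost of an arbitrarily small decrease of $\alpha$, and since the hypothesis \eqref{beta.sing}, $s<\tfrac{\beta-1}{2\beta}$, makes $2\alpha+\gamma=\tfrac{1+2\beta(1-s)}{\beta+2}>1$ strict, such a decrease does not affect any use of \eqref{asm:b}.

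For the second part, $\supp V$ lies in a fixed bounded interval $K$, so after a harmless cutoff $|b_V(\psi_m,\psi_n)|\le C\|\psi_m\psi_n\|_{W^{1,2}(K)}^{s}\|\psi_m\psi_n\|_{L^2(K)}^{1-s}$, and I use instead the pointwise bounds \eqref{|psi|}, \eqref{|psi'|}, which on a fixed compact set hold uniformly in $x$: $\|\psi_k\|_{L^\infty(K)}=\BigO(k^{-1/(\beta+2)})$ and $\|\psi_k'\|_{L^\infty(K)}=\BigO(k^{(\beta-1)/(\beta+2)})$. Hence $\|\psi_m\psi_n\|_{L^2(K)}=\BigO((mn)^{-1/(\beta+2)})$ and, since $m^{-\beta/(\beta+2)}\le1$ and $n^{-\beta/(\beta+2)}\le1$, both Leibniz terms give $\|(\psi_m\psi_n)'\|_{L^2(K)}=\BigO((mn)^{(\beta-1)/(\beta+2)})$. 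Interpolating,
$$|b_V(\psi_m,\psi_n)|=\BigO\!\left((mn)^{\frac{s(\beta-1)-(1-s)}{\beta+2}}\right)=\BigO\!\left((mn)^{-\frac{1-\beta s}{\beta+2}}\right),$$
which is exactly \eqref{asm:b} with the exponent $\alpha$ of~\eqref{V.sing.alpha.comp}, now with no logarithmic loss because only the non-critical $L^\infty$-bounds are used on a bounded set; here $2\alpha+\gamma=\tfrac{2+2\beta(1-s)}{\beta+2}>1$ precisely under the hypothesis \eqref{V.sing.s.comp}, $s<\tfrac12$.

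The point requiring the most care is the logarithmic factor at the endpoint Lebesgue index $4$ in the non-compactly-supported case, handled as above; a minor point is to verify that \eqref{|psi|}, \eqref{|psi'|} (stated for a fixed point $x$) hold uniformly on a compact set — they do, since there the turning point $x_{\mu_k}$ stays bounded away and $\mu_k-Q$ is comparable to $\mu_k$ for large $k$. The distinction $\beta<2$ versus $\beta\ge2$, visible in several of the $L^q$-estimates invoked, plays no role in the final combination.
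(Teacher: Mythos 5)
Your proof follows the paper's argument step for step: the $W^{-s,2}$ duality/interpolation bound \eqref{V.sing.delta}, the Leibniz rule combined with H\"older through $L^4$, the derivative estimate \eqref{psi_k'.q} and the $L^4$-bound \eqref{aho.ef.est} for the general case, and the pointwise bounds \eqref{|psi|}, \eqref{|psi'|} on a fixed compact set for the compactly supported case. If anything you are more careful than the paper on one point: the logarithmic factor at the critical index $q=4$ appears in the paper's own displayed bound $(mn)^{(2\beta s-1)/(2(\beta+2))}\log m \log n$ as well but is left unremarked there (in contrast to Theorem~\ref{thm:aho.Lptau}, where the paper explicitly subtracts an $\eps$ from $\alpha$ in the critical case $p=2$), and your observation that a slight decrease of $\alpha$ absorbs it and is harmless because \eqref{beta.sing} makes $2\alpha+\gamma>1$ strict is exactly the right way to close that small gap.
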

\begin{proof}
The letter $C$ denotes a constant, which can vary in every step, however, it is independent of $m$ and $n$.
We employ~\eqref{V.sing.delta}, H\"older inequality, \eqref{psi_k'.q} with $q=4$ and finally \eqref{Q.beta.EV.asym}, \eqref{aho.ef.est}
\begin{equation}\label{bV.sing.alpha}
\begin{aligned}
|b_V(\psi_m,\psi_n)| 
& \leq C 
\left( 
\|\psi_m' \psi_n\|^s +  \|\psi_n' \psi_m\|^s + \|\psi_m \psi_n\|^s
\right) \|\psi_m\psi_n\|^{1-s}.
\\
& \leq C
\left( \mu_m^\frac s 2 +  \mu_n^\frac s 2 + 1
\right)\|\psi_m\|_{L^4(\R)}  \|\psi_n\|_{L^4(\R)}
\\
& \leq C
\left( \mu_m \mu_n\right)^\frac s 2 \|\psi_m\|_{L^4(\R)}  \|\psi_n\|_{L^4(\R)}
\\
& \leq C
(mn)^\frac{2 \beta s -1}{2(\beta +2)} \log m \log n.
\end{aligned}
\end{equation}
Thus the condition~\eqref{asm:b} is satisfied with $\alpha$ as in \eqref{V.sing.alpha}.

If the support of $V$ is compact, we use the pointwise bounds for $|\psi_k|$ and $|\psi'_k|$, see \eqref{|psi|}, \eqref{|psi'|}. Similarly as above, we obtain from \eqref{V.sing.delta} that
\begin{equation}\label{bV.sing.alpha.comp}
|b_V(\psi_m,\psi_n)| 
\leq C (mn)^\frac{\beta s -1}{\beta+2}
\end{equation}
and so the condition~\eqref{asm:b} is satisfied with $\alpha$ as in \eqref{V.sing.alpha.comp}.
\end{proof}

Notice that for $\beta \to + \infty$ in \eqref{beta.sing}, we recover the condition $s <1/2$ derived in Section~\ref{subsubsec:Lapl.sing} for perturbations of $-\dd^2/\dd x^2$ on finite interval.

\begin{corollary}\label{cor:aho.sing}
Let $A$ be as in \eqref{aho.def} and $V \in W^{-s,2}(\R)$ with $s$ satisfying \eqref{beta.sing} or \eqref{V.sing.s.comp} if the support of $V$ is compact. Then the eigensystem of $T$, being the form sum of these $A$ and $V$, contains a Riesz basis.
\end{corollary}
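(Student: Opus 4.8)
The plan is to derive the statement by combining the analytic input of Theorem~\ref{thm:aho.sing} with the abstract Riesz basis result Theorem~\ref{thm:RB}, so that essentially no new work is required. First I would note that, since $Q$ satisfies Assumption~\ref{asm:Q}, Proposition~\ref{prop:Q.gaps} shows that the self-adjoint operator $A$ from~\eqref{aho.def} obeys the gap condition~\eqref{asm:A} with $\gamma = 2\beta/(\beta+2)$. Next, Theorem~\ref{thm:aho.sing} provides the local form-subordination estimate~\eqref{asm:b} for the form $b_V$ from~\eqref{bV.sing}: in the general case with $\alpha = (1-2\beta s)/(2(\beta+2))$, and with $\alpha = (1-\beta s)/(\beta+2)$ when $\supp V$ is compact.

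The one point that must be checked is that the admissibility requirement $2\alpha+\gamma>1$ of~\eqref{asm:b} is satisfied under the hypotheses on $s$. In the general case
\begin{equation*}
2\alpha+\gamma = \frac{1-2\beta s}{\beta+2}+\frac{2\beta}{\beta+2} = \frac{1+2\beta-2\beta s}{\beta+2},
\end{equation*}
so $2\alpha+\gamma>1$ is equivalent to $\beta-1>2\beta s$, which is exactly~\eqref{beta.sing}. For compactly supported $V$,
\begin{equation*}
2\alpha+\gamma = \frac{2-2\beta s}{\beta+2}+\frac{2\beta}{\beta+2} = \frac{2(1+\beta-\beta s)}{\beta+2},
\end{equation*}
and $2\alpha+\gamma>1$ reduces to $\beta>2\beta s$, i.e.\ to~\eqref{V.sing.s.comp}. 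In either case the pair $(A,b_V)$ satisfies both~\eqref{asm:A} and~\eqref{asm:b}. I would also record that $b_V$ is a legitimate form on $\Dom(a)\subset W^{1,2}(\R)$: for $\phi,\psi\in W^{1,2}(\R)$ the one-dimensional Sobolev embedding gives $\phi,\psi\in L^\infty(\R)$, hence $\phi\overline\psi\in L^2(\R)$ and $(\phi\overline\psi)'=\phi'\overline\psi+\phi\overline{\psi'}\in L^2(\R)$, so $\phi\overline\psi\in W^{1,2}(\R)$ and the pairing $(V,\phi\overline\psi)$ is well defined for $V\in W^{-s,2}(\R)$ with $s\le1$. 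Consequently, by~\eqref{form.sub}--\eqref{b.rb}, the form sum $t=a+b_V$ is sectorial and closed and determines the m-sectorial operator $T$ of~\eqref{T.def}.

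Finally I would simply invoke Theorem~\ref{thm:RB}: since~\eqref{asm:A} and~\eqref{asm:b} hold, there are $N$ and $N_1>N$ such that~\eqref{phi_n.EF} holds for $n>N_1$, and the system formed by a basis of $\Ran(S_{N_1})$ together with the eigenvectors $\{\phi_n\}_{n>N_1}$ of $T$ from~\eqref{phi_n.def} is a Riesz basis of $L^2(\R)$ contained in the eigensystem of $T$. I do not expect any real obstacle here: the substantive estimates --- the weighted $L^q$-bounds for $\{\psi_k\}$ and $\{\psi_k'\}$ in Proposition~\ref{prop:Q.Lq} and the lemma preceding Theorem~\ref{thm:aho.sing} --- have already been established, so the corollary reduces to assembling these ingredients and verifying the elementary inequality $2\alpha+\gamma>1$.
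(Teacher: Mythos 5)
Your proof is correct and takes the same route the paper uses: Corollary~\ref{cor:aho.sing} follows by combining Theorem~\ref{thm:aho.sing} (which supplies conditions~\eqref{asm:A} and~\eqref{asm:b} for the pair $(A,b_V)$) with the abstract Riesz basis result Theorem~\ref{thm:RB}. Your explicit verification that $2\alpha+\gamma>1$ reduces to~\eqref{beta.sing} (resp.~\eqref{V.sing.s.comp}) and that $b_V$ is well-defined on $\Dom(a)$ is a useful sanity check, but it is already implicit in the statement of Theorem~\ref{thm:aho.sing}.
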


Finally, we consider the perturbation of $A$ by $\delta$-interactions like in Section~\ref{subsubsec:Lapl.delta}, see also \eqref{A.V.intro}. Let
\begin{equation}\label{b.delta.inf.beta}
b_\delta^\infty[\psi] := \sum_{k \in \Z} \nu_k |\psi(x_k)|^2, \quad \{\nu_k\} \in \ell^1(\Z), \quad \{x_k\} \subset \R, \quad \psi \in \Dom(a).
\end{equation}
It follows from \eqref{sup|psi|} that
\begin{equation}
|b_\delta^\infty(\psi_m,\psi_n)| \leq C \|\nu\|_{\ell^1(\Z)} (mn)^\frac{\beta-4}{6(\beta+2)},
\end{equation}
thus $A$ and $b_\delta^\infty$ satisfy the condition~\eqref{asm:b} if $\beta >1$. Hence, 
by Theorem \ref{thm:RB}, the eigensystem of $T$, being the form sum of $A$ in \eqref{aho.def} and $b_\delta^\infty$ in \eqref{b.delta.inf.beta}, contains a Riesz basis.

{\footnotesize
\bibliographystyle{acm}
\bibliography{C:/Data/00Synchronized/references}
}

\end{document}